\documentclass[10pt]{article}
\usepackage{schola-article}

\title{Proof Theory and Interpolation for Sacchetti's Logics\thanks{Borja Sierra Miranda and Thomas Studer are supported by the Swiss National Science Foundation project 200021\_214820 \emph{Non-wellfounded and cyclic proof theory}.}}
\author{Borja Sierra Miranda\\
\vspace{-0.3em}
\small Logic and Theory Group\\
\vspace{-0.3em}
\small University of Bern\\
\small Bern, Switzerland\\
\small \texttt{borja.sierra@unibe.ch}
\and
Thomas Studer\\
\vspace{-0.3em}
\small Logic and Theory Group\\
\vspace{-0.3em}
\small University of Bern\\
\small Bern, Switzerland\\
\small \texttt{thomas.studer@unibe.ch}
}
\date{}

\usepackage{natbib}
\usepackage{modalops}
\usepackage{multicol}
\usepackage{quiver}

\newcommand\union{\cup}
\newcommand\Union{\bigcup}
\newcommand\inter{\cap}

\newcommand\set[1]{\left\{#1\right\}}

\DeclareMathOperator\domain{Dom}

\newcommand\function[1][]{\xlongrightarrow{#1}}

\DeclareMathOperator\length{lgth}

\newcommand\var{\mathrm{Var}}
\newcommand\wff{\mathrm{Fm}}

\newcommand\PA{\mathsf{PA}}
\newcommand\wGL[1]{\mathsf{wGL}_{#1}}
\newcommand\GL{\mathsf{GL}}
\newcommand\wKT[1]{\mathsf{wK4}_{#1}}
\newcommand\K{\mathsf{K}}
\newcommand\KT{\mathsf{K4}}

\newcommand\Grz{\mathsf{Grz}}
\newcommand\wGrz{\mathsf{wGrz}}

\newcommand\g[1]{\mathcal{G}#1}
\newcommand\n[1]{\mathcal{G}^\infty#1}

\newcommand\nmod[1]{\mathcal{G}^\infty_{\mathrm{mod}}#1}

\newcommand\Kax[1][]{\mathrm{K}_{#1}}
\newcommand\Lax[1][]{\mathrm{L}_{#1}}
\newcommand\wMax[1][]{\mathrm{4}_{#1}}

\newcommand\MP{\mathrm{MP}}
\newcommand\NEC[1][]{\mathrm{Nec}_{#1}}

\newcommand\ax{\mathrm{ax}}
\newcommand\Ax{\mathrm{Ax}}
\newcommand\botL{{\bot}\mathrm{L}}
\newcommand\botR{{\bot}\mathrm{R}}
\newcommand\toL{{\to}\mathrm{L}}
\newcommand\toR{{\to}\mathrm{R}}
\newcommand\negL{{\neg}\mathrm{L}}
\newcommand\negR{{\neg}\mathrm{R}}
\newcommand\veeL{{\vee}\mathrm{L}}
\newcommand\veeR{{\vee}\mathrm{R}}
\newcommand\wedgeL{{\wedge}\mathrm{L}}
\newcommand\wedgeR{{\wedge}\mathrm{R}}

\newcommand\modal[2][]{\nec^{#2}_{#1}}
\newcommand\modalalt[1]{\nec^{#1}_{\mathrm{mod}}}

\newcommand\lob{\mathrm{L}\ddot{\mathrm{o}}\mathrm{b}}

\newcommand\wk{\mathrm{Wk}}
\newcommand\ctr{\mathrm{Ctr}}
\newcommand\cut{\mathrm{Cut}}

\newcommand\localCut{\ell\mathrm{Cut}}
\newcommand\inv{\mathrm{Inv}}

\newcommand\rep{\mathrm{Rep}}
\newcommand\emp{\mathrm{Emp}}

\newcommand\voc{\mathrm{Voc}}

\newcommand\dep{\mathrm{dep}}
\newcommand\lhg{\mathrm{lhg}}
\newcommand\lrul{\mathrm{lRul}}
\newcommand\hg{\mathrm{hg}}
\newcommand\sub{\mathrm{Sub}}

\newcommand\mmod[1]{[#1]_{\mathrm{mod}}}

\begin{document}

\maketitle


\begin{abstract}
We study the proof theory of Sacchetti's modal logics  \(\wGL{n}\),
a family of logics generalizing Gödel--Löb provability logic $\GL$
 by replacing transitivity with
$n$-transitivity.
We make three main contributions.
First, we solve an open problem of Iwata by providing an effective cut elimination procedure for  \(\wGL{n}\).  
Second, building on this result, we introduce a new non-wellfounded sequent calculus for
\(\wGL{n}\)
with an improved subformula property.
Third, using this calculus together with interpolation templates, we prove that \(\wGL{n}\) has the uniform Lyndon interpolation property,  substantially strengthening previous interpolation results for these logics.
\end{abstract}

\section{Introduction}
Provability logic is the branch of modal logic that studies the notion of mathematical proof through the lens of modal operators, by interpreting the modality $\Box$ as \emph{provable in some mathematical theory} (most commonly an arithmetical theory such as Peano Arithmetic~$\PA$).
A landmark result in the area is Solovay's characterization \cite{solovay} of the provability logic of $\PA$ as the modal logic $\GL$, a celebrated system with a rich theory and numerous remarkable properties.

One of the most important such properties is the existence of fixed points with respect to modalized variables, known as the Fixed Point Theorem.
This result, due independently to de Jongh (unpublished) and Sambin \cite{Sambin1976-SAMAEF, Valentini}, has attracted considerable attention and has been studied in a variety of settings (see \cite{lindstrom, Visser1981-VISAPL, visser-fixpoint, japanese-fixpoint, Mardaev1993}).
In \cite{sacchetti}, Sacchetti initiated a systematic study of modal logics satisfying the Fixed Point Theorem and introduced a family of logics, denoted $\wGL{n}$ for $n \geq 1$, that generalize $\GL$ by replacing transitivity with $n$-transitivity (so that $\wGL{1} = \GL$).
Sacchetti proved that each $\wGL{n}$ satisfies the Fixed Point Theorem \cite{sacchetti}.
An explicit method for computing these fixed points, which will play a central role for our results, was subsequently given in \cite{kurahashi-okawa}.
Although these logics did not originate in the study of provability, they have since been shown to be complete with respect to an appropriate arithmetical semantics \cite{provability-kurahashi}.

The proof theory of $\wGL{n}$ was first investigated in \cite{iwata}, where a wellfounded sequent calculus is defined and cut admissibility is established via a non-constructive argument through the Kripke semantics of $\wGL{n}$.
That work also introduces a non-wellfounded sequent calculus for $\wGL{n}$ and uses it to prove Lyndon interpolation.

The present paper makes the following contributions:
\begin{enumerate}
  \item We solve an open problem from \cite{iwata} by giving an \emph{effective} cut elimination procedure for the well-founded sequent calculus of $\wGL{n}$.
    The key tool is a non-wellfounded (cyclic) proof system together with the general cut elimination procedure developed in \cite{coalgebraic}.  
\item This is the first description of an \emph{effective} cut elimination procedure based on the method introduced in~\cite{coalgebraic}.   On a technical note, we remark that since the calculi for $\wGL{n}$ do not have a nice subformula property,   obtaining this result required new ideas and techniques, in particular for the translation from non-wellfounded to wellfounded proofs.
  \item Based on the cut-elimination result,  we introduce a new non-wellfounded sequent calculus for $\wGL{n}$ with an  improved subformula property.
  \item Using this improved calculus together with interpolation templates~\cite{interpolation-guillermo}, we establish \emph{uniform} Lyndon interpolation for $\wGL{n}$, a substantial strengthening of the interpolation property given in \cite{iwata}.  Note that since the systems are rather weak,  they do not have \emph{nice} fixed points (with respect to polarities).  Through a careful analysis,  however, we can establish a certain shape of the fixed points that gives us the Lyndon property.  This can be seen as a generalization of the proof that $\GL$ has  uniform Lyndon interpolation.
\end{enumerate}

As mentioned above, our work is heavily based on non-wellfounded proofs.  Such deductive systems have been successfully developed for a wide range of logics with explicit fixed points, see, e.g., \cite{Brotherston, ill-founded-intui-linear-time, guillermo-CTL, thomasCK, KokkinisStuder+2016+171+192, saurin, das, masterModality} among many others.
Shamkanov~\cite{shamkanovGl} observed that non-wellfounded proofs are also closely related to the provability logic~\(\GL\), even though this logic lacks explicit fixed points. Subsequently, he and Savateev extended this framework to encompass the logics \(\Grz\)~\cite{shamkanovGrz} and \(\wGrz\)~\cite{shamkanovwGrz}. 
Based on their results,  non-wellfounded systems have been introduced for many provability and interpretability logics~\cite{justus,proofth-interpretability,bimodal-ulip,gls-ulip}.
One notable application of non-wellfounded proofs lies in establishing interpolation properties, see, e.g., \cite{interpolation-guillermo, pdl-interpolation, converse-pdl-interpolation, uip-interpretability, gls-ulip}. We likewise leverage our systems to derive interpolation results for Sacchetti's logics.


\paragraph{Organization of the paper.}
Section~\ref{sec:preliminaries} introduces the necessary background in modal logic and proof theory.
Section~\ref{sec:wellfounded-calculus} recalls the wellfounded sequent calculus for $\wGL{n}$ and its equivalence,  using cut, to $\wGL{n}$.
Section~\ref{sec:cut-elimination} introduces a non-wellfounded calculus for $\wGL{n}$ and uses it to provide an effective cut elimination proof for the wellfounded calculus; cut elimination is then exploited to obtain an alternative non-wellfounded calculus with an improved subformula property.
Finally, Section~\ref{sec:interpolation} establishes the uniform Lyndon interpolation property for $\wGL{n}$.

\section{Preliminaries}
\label{sec:preliminaries}

\subsection{Sacchetti's Logics}

We fix an infinite countable set \(\var\) whose elements are called \emph{propositional variables}.
We will work with the usual modal language,  which is defined by the following Backus--Naur form:
\[
  \phi ::= p \mid \bot \mid \phi \to \phi \mid \nec \phi,
\]
where \(p \in \var\).
The expressions of this language are called (modal) formulas.  $\wff$ denotes the collection of all formulas.
As usual, we define
\[
  \neg \phi := \phi \to \bot, 
  \qquad
  \phi \vee \psi := \neg \phi \to \psi,
  \qquad
  \phi \wedge \psi := \neg (\neg \phi \vee \neg \psi),
  \qquad
  \pos \phi := \neg \nec \neg \phi.
\]
For \(n \in \mathbb{N}\), we define \(\nec^0 \phi := \phi\) and \(\nec^{m+1} \phi := \nec \nec^m \phi\).
The complexity of \(\phi\), denoted \(|\phi|\) will be the number of \(\to\) and \(\nec\) connectives occurring in \(\phi\).
Given a formula \(\phi\), we define its set of \emph{subformulas}, denoted \(\sub(\phi)\), as usual:
\begin{align*}
  &\sub(p) = \set{p},
  &&\sub(\bot) = \set{\bot}, \\
  &\sub(\phi \to \psi) = \set{\phi \to \psi} \union \sub(\phi) \union \sub(\psi), 
  &&\sub(\nec \phi) = \set{\nec \phi} \union \sub(\phi).
\end{align*}

A \emph{(normal) modal logic} is a set of formulas \(L\) such that
\begin{enumerate}
  \item every classical propositional tautology (in the language of modal formulas) belongs to \(L\);
  \item \((\Kax)\) \ \(\nec(\phi \to \psi) \to \nec \phi \to \nec \psi\) belongs to \(L\) for any formulas \(\phi,\psi\);
  \item \(L\) is closed under the rules
    \[
      \AxiomC{\(\phi \to \psi\)}
      \AxiomC{\(\phi\)}
      \RightLabel{\(\MP\)}
      \BinaryInfC{\(\psi\)}
      \DisplayProof
      \qquad
      \AxiomC{\(\phi\)}
      \RightLabel{\(\NEC\)}
      \UnaryInfC{\(\nec \phi\)}
      \DisplayProof
    \]
\end{enumerate}
\(\K\) is the smallest normal logic.
For a normal modal logic \(L\) and a set of formulas \(\Gamma\) we will write \(L \oplus \Gamma\) for the smallest normal modal logic containing the set \(\Gamma\).
We are ready to define Sacchetti's logics.

\begin{definition}
  The logic \(\wGL{n}\), for \(n \geq 1\), is defined as \(\K \oplus \set{\nec(\nec^n \phi \to \phi) \to \nec \phi \mid \phi \in \wff}\).
\end{definition}

Note that \(\wGL{1}\) is just the usual G\"odel--L\"ob logic \(\GL\).
We fix an arbitrary \(n \geq 1\) for the rest of the paper.  Thus all our results will work for any of Sacchetti's logics (including \(\GL\)).

The logic \(\wGL{n}\) is \(n\)-transitive, which is expressed in the following lemma.

\begin{lemma}[\cite{sacchetti}]
  \(\wGL{n} \vdash \nec \phi \to \nec^{n+1} \phi\).
\end{lemma}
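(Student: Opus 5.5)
The plan is to adapt the standard derivation of transitivity in $\GL$ by applying the Sacchetti axiom to a suitably strengthened formula. Set
\[
  \psi := \phi \wedge \nec^n \phi .
\]
The aim is to show that $\phi \to (\nec^n \psi \to \psi)$ is a theorem of $\wGL{n}$. Necessitation and $\Kax$ then turn the hypothesis $\nec\phi$ into the antecedent $\nec(\nec^n\psi \to \psi)$ of the axiom instance for $\psi$.

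First, $\psi \to \phi$ is a tautology. Iterating $\NEC$ and $\Kax$ $n$ times (monotonicity of $\nec$ in $\K$) gives $\wGL{n} \vdash \nec^n\psi \to \nec^n\phi$. Propositional reasoning then yields
\[
  \wGL{n} \vdash \phi \to (\nec^n \psi \to \phi \wedge \nec^n\phi),
\]
that is, $\wGL{n} \vdash \phi \to (\nec^n\psi \to \psi)$.

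Next, applying $\NEC$ and then $\Kax$ gives $\wGL{n} \vdash \nec\phi \to \nec(\nec^n\psi\to\psi)$. The axiom instance $\nec(\nec^n\psi \to \psi) \to \nec\psi$ then gives $\wGL{n} \vdash \nec\phi \to \nec\psi$. Finally, since $\psi \to \nec^n\phi$ is a tautology, monotonicity gives $\nec\psi \to \nec\nec^n\phi$, which is $\nec\psi \to \nec^{n+1}\phi$. Chaining these implications by $\MP$ proves the lemma.

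The only real obstacle is choosing $\psi$ so that $\nec^n\psi \to \psi$ follows from $\phi$ alone. The point is that $\psi$ must contain the conjunct $\nec^n\phi$, and this conjunct is recovered from $\nec^n\psi$ by monotonicity. Once $\psi$ is fixed, every step is routine $\K$-reasoning.
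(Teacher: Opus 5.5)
Your proof is correct. With $\psi := \phi\wedge\nec^n\phi$, every step is valid $\K$-reasoning plus one instance of the axiom:
\begin{itemize}
\item monotonicity gives $\nec^n\psi\to\nec^n\phi$, hence $\vdash\phi\to(\nec^n\psi\to\psi)$;
\item $\NEC$ and $\Kax$ turn this into $\nec\phi\to\nec(\nec^n\psi\to\psi)$;
\item the axiom instance for $\psi$ yields $\nec\phi\to\nec\psi$;
\item monotonicity applied to the tautology $\psi\to\nec^n\phi$ finishes the chain.
\end{itemize}
For $n=1$ this is exactly the classical derivation of transitivity in $\GL$ via $\phi\wedge\nec\phi$.

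The paper gives no proof of its own and simply cites Sacchetti, so there is no alternative route to compare against. A Hilbert-style argument like yours is the appropriate kind of proof here. The sequent $\nec\phi\Rightarrow\nec^{n+1}\phi$ does have a one-step derivation in $\g{\wGL{n}}$: apply $(\modal{\wGL{n}})$ to the $(\Ax)$ leaf $\nec^n\phi,\phi,\nec^{2n}\phi\Rightarrow\nec^n\phi$. However, transferring that derivation back to $\wGL{n}$ would be circular, because the soundness proof of the modal rule (Lemma~\ref{soundness-gwGL}) itself invokes $n$-transitivity.
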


\subsection{Interpolation and modal equational systems}
\label{subsec:interpolation-and-modal-equational-systems}
In this subsection, we will recall some concepts about uniform Lyndon interpolation and we will introduce some concepts related to fixed points that we will need for the calculation of interpolants.
A \emph{vocabulary} is just a set of propositional variables.

\begin{definition}
  Let \(\phi\) be a formula, we define its positive and negative vocabulary, denoted \(\voc_+(\phi)\) and \(\voc_-(\phi)\) respectively, as
  \begin{align*}
    &\voc_+(p) = \set{p}, 
    &&\voc_-(p) = \varnothing, \\
    &\voc_+(\bot) = \varnothing, 
    &&\voc_-(\bot) = \varnothing, \\
    &\voc_+(\phi \to \psi) = \voc_-(\phi) \union \voc_+(\psi), 
    &&\voc_-(\phi \to \psi) = \voc_+(\phi) \union \voc_-(\psi), \\
    &\voc_+(\nec \phi) = \voc_+(\phi), 
    &&\voc_-(\nec \phi) = \voc_-(\phi).
    \qedhere
  \end{align*}
\end{definition}

We will write \(\overline{+}\) to mean \(-\) and \(\overline{-}\) to mean \(+\).
From the definitions and using an induction on the complexity of \(\phi\), it is straightforward to prove the following lemma.

\begin{lemma}
  \label{substitution-and-polarity}
  Let \(\bar{p} = p_0,\ldots,p_{m-1}\) and \(\bar{q} = q_0,\ldots,q_{k-1})\) be propositional variables and \(\phi(\bar{p},\bar{q})\), \(\psi_0\),\ldots,\(\psi_{m-1}\), \(\chi_0\),\ldots,\(\chi_{k-1}\) be formulas. 
  We have that if \(\voc_-(\phi) \cap \bar{p} = \varnothing\) and \(\voc_+(\phi) \cap \bar{q} = \varnothing\), then for \(b \in \set{+,-}\),
  \[
    \voc_b(\phi(\psi_0,\ldots,\psi_{m-1}, \chi_0,\ldots,\chi_{k-1})) \subseteq
    \voc_b(\phi) \setminus\bar{p}\bar{q}
    \union \Union_{i < m} \voc_b(\psi_i) \union \Union_{j < k} \voc_{\overline{b}}(\chi_j).
  \]
\end{lemma}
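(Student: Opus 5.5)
The plan is to prove the inclusion by induction on the complexity \(|\phi|\), simultaneously for both polarities \(b \in \set{+,-}\). The statement has to be proved for both signs at once because the implication case flips polarity in the antecedent. I will write \(\phi^*\) for \(\phi(\psi_0,\ldots,\psi_{m-1},\chi_0,\ldots,\chi_{k-1})\). I read the hypotheses as saying that the \(p_i\) occur only positively in \(\phi\) and the \(q_j\) only negatively, and I will use them in exactly that form.

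\emph{Base cases.}
\begin{itemize}
  \item If \(\phi = \bot\), both sides are trivial, since \(\voc_b(\bot) = \varnothing\).
  \item If \(\phi = r\) is a variable not among \(\bar{p}\bar{q}\), then \(\phi^* = r\). Hence \(\voc_b(\phi^*) = \voc_b(r) = \voc_b(\phi) \setminus \bar{p}\bar{q}\).
  \item If \(\phi = p_i\), then \(\phi^* = \psi_i\). So \(\voc_b(\phi^*) = \voc_b(\psi_i)\), which lies in the union on the right.
  \item If \(\phi = q_j\), then \(q_j \in \voc_+(\phi)\). This contradicts \(\voc_+(\phi) \cap \bar{q} = \varnothing\), so this case cannot occur.
\end{itemize}
This is the only place where the hypotheses are used directly.

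\emph{Box case.} If \(\phi = \nec \theta\), then \(\voc_b(\nec\theta) = \voc_b(\theta)\) for both \(b\). Hence \(\theta\) satisfies the same hypotheses as \(\phi\), and \(\phi^* = \nec \theta^*\). The claim follows immediately from the induction hypothesis for \(\theta\).

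\emph{Implication case.} Let \(\phi = \theta_1 \to \theta_2\).
\begin{itemize}
  \item For \(\theta_2\), we have \(\voc_+(\theta_2) \subseteq \voc_+(\phi)\) and \(\voc_-(\theta_2) \subseteq \voc_-(\phi)\). So \(\theta_2\) satisfies the hypotheses with the same \(\bar{p},\bar{q}\).
  \item For \(\theta_1\), the polarities are swapped: \(\voc_-(\theta_1) \subseteq \voc_+(\phi)\) and \(\voc_+(\theta_1) \subseteq \voc_-(\phi)\). So \(\theta_1\) satisfies the hypotheses with the roles of \(\bar{p}\) and \(\bar{q}\) exchanged, that is, with \(\bar{q}\) as the positive block (substituted by the \(\chi_j\)) and \(\bar{p}\) as the negative block (substituted by the \(\psi_i\)).
\end{itemize}
The lemma allows arbitrary tuples in each role, so applying the induction hypothesis to \(\theta_1\) in this swapped form is legitimate. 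It yields
\[
  \voc_{\overline{b}}(\theta_1^*) \subseteq \voc_{\overline{b}}(\theta_1)\setminus\bar{p}\bar{q} \union \Union_{j<k} \voc_{\overline{b}}(\chi_j) \union \Union_{i<m} \voc_{b}(\psi_i),
\]
using \(\overline{\overline{b}} = b\).

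Now combine the two parts via \(\voc_b(\phi^*) = \voc_{\overline{b}}(\theta_1^*) \union \voc_b(\theta_2^*)\). Together with \(\voc_{\overline{b}}(\theta_1) \union \voc_b(\theta_2) = \voc_b(\phi)\), this gives exactly the required right-hand side.

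The only real subtlety is this polarity bookkeeping in the antecedent of an implication. The induction must be set up so that the hypothesis applies with the roles of the two substitution blocks exchanged, not merely with a flipped sign \(b\). Once the statement is read as symmetric under swapping \((\bar{p},\bar{\psi})\) with \((\bar{q},\bar{\chi})\) together with \(b \leftrightarrow \overline{b}\), every case is a one-line check.
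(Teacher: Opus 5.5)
Your proposal is correct and follows the paper's approach: the paper only remarks that the lemma follows by a straightforward induction on the complexity of \(\phi\). Your induction, carried out for both polarities at once and applying the induction hypothesis to the antecedent of an implication with the roles of \((\bar{p},\bar{\psi})\) and \((\bar{q},\bar{\chi})\) exchanged, supplies exactly the details that remark leaves implicit. That exchange is legitimate because the statement is quantified over arbitrary tuples.
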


Uniform Lyndon interpolation arises from combining two different strengthening of the interpolation property: uniform interpolation and Lyndon interpolation \cite{ulip-Kurahashi}.
We recall its definition.

\begin{definition}
  Let \(L\) be a modal logic.
  For any formula \(\phi\) and vocabularies \(V_+\), \(V_-\), we will say that \(\iota\) is a \emph{(uniform Lyndon) interpolant} of \(\phi\) over \((V_+,V_-)\) in \(L\) if
  \begin{enumerate}
    \item \(\voc_+(\psi) \subseteq V_+\) and \(\voc_-(\psi) \subseteq V_-\),
    \item \(L \vdash \phi \to \iota\), and
    \item for any \(\psi\) such that \(\voc_+(\psi) \subseteq V_+\) and \(\voc_-(\psi) \subseteq V_-\), we have that \(L \vdash \phi \to \psi\) implies \(L \vdash \iota \to \psi\).
  \end{enumerate}
  \(L\) is said to have \emph{uniform Lyndon interpolation} if for any formula \(\phi\) and vocabularies \(V_+, V_-\) there is a uniform Lyndon interpolant of \(\phi\) over \((V_+,V_-)\) in \(L\).
\end{definition}

It is easy to show that any two (uniform Lyndon) interpolants of a formula \(\phi\), over the same vocabulary, will be logically equivalent.
For that reason it is common to speak about \emph{the} interpolant of a formula.
In Section~\ref{sec:interpolation}, we will show that \(\wGL{n}\) has uniform Lyndon interpolation.
For that purpose it will be necessary to introduce several notions related to fixed points.
We start with the notion of depth of a variable in a formula, which was introduced in \cite{kurahashi-okawa}.

\begin{definition}
  Let \(\phi\) be a formula and \(p\) be a variable.
  Define the set \(\dep(\phi,p) \subseteq \mathbb{N}\) as
  \begin{align*}
    &\dep(p,p) = \set{0},
    &&\dep(q,p) = \varnothing, \text{ for }p \neq q, \\
    &\dep(\bot,p) = \varnothing,
    &&\dep(\phi_0 \to \phi_1, p) = \dep(\phi_0, p) \union \dep(\phi_1, p), \\
    &\dep(\nec \phi_0,p) = \set{m + 1\mid \dep(\phi_0,p)}. 
  \end{align*}
  We also define the set \(\dep_n(\phi,p) = \set{[m]_n \mid m \in \dep(\phi,p)} \subseteq \mathbb{Z}/\mathbb{Z}n\).
\end{definition}

From this definition and using induction on \(\phi\) and \(\psi\), we can show the following.

\begin{lemma}[\cite{kurahashi-okawa}]
  \label{substitution-and-depth}
  Let \(\phi(p,q)\), \(\psi(p)\) and \(\psi\) be formulas with \(p \neq q\).
  Then
  \begin{enumerate}
    \item \( \dep(\phi(p, \chi),p) = \dep(\phi,p) \union \set{m + k \mid m \in \dep(\phi,q), k \in \dep(\chi,p)} \).
    \item \( \dep(\psi(\chi),p) = \set{m + k \mid m \in \dep(\phi,p), k \in \dep(\psi,p)} \).
  \end{enumerate}
\end{lemma}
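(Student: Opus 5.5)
Both items follow by a routine structural induction. I read the statement with its evident typos corrected. In item~1, \(\phi(p,\chi)\) is obtained from \(\phi(p,q)\) by substituting a formula \(\chi\) for \(q\); note that \(\chi\) may itself contain \(p\). In item~2, \(\psi(\chi)\) is obtained from \(\psi(p)\) by substituting \(\chi\) for \(p\), and the claim is \(\dep(\psi(\chi),p) = \set{m+k \mid m \in \dep(\psi,p),\ k \in \dep(\chi,p)}\). The modal clause of the definition is likewise read as \(\dep(\nec\phi_0,p)=\set{m+1 \mid m\in\dep(\phi_0,p)}\).

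For item~1 I would induct on \(\phi\), writing \(A+B\) for the sumset \(\set{a+b \mid a\in A, b\in B}\); note that \(\varnothing + B = \varnothing\). The base cases are as follows.
\begin{enumerate}
  \item If \(\phi=p\), then \(\phi(p,\chi)=p\). Both sides equal \(\set{0}\), since \(\dep(p,q)=\varnothing\) because \(p\neq q\).
  \item If \(\phi=q\), then \(\phi(p,\chi)=\chi\). The right-hand side is \(\varnothing \union (\set{0}+\dep(\chi,p)) = \dep(\chi,p)\).
  \item If \(\phi\) is another variable or \(\bot\), both sides are empty.
\end{enumerate}
For the implication case \(\phi=\phi_0\to\phi_1\), substitution commutes with \(\to\). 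The claim then follows from the induction hypotheses, because \(\dep\) of an implication is the union of the \(\dep\)'s of its parts and the sumset distributes over unions. For the box case \(\phi=\nec\phi_0\), substitution commutes with \(\nec\). Both sides of the claim are obtained by adding \(1\) to every element, using \((A+1)\union((B+1)+C) = (A\union(B+C))+1\).

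For item~2 I would reduce to item~1 rather than redo the induction. Choose a variable \(q\) that is fresh, i.e.\ different from \(p\) and occurring in neither \(\psi\) nor \(\chi\). Let \(\phi:=\psi[q/p]\), the formula \(\psi\) with \(p\) renamed to \(q\). Then \(\phi(p,\chi)=\psi(\chi)\) and \(\dep(\phi,p)=\varnothing\). A trivial induction shows \(\dep(\psi[q/p],q)=\dep(\psi,p)\). Plugging these into item~1 gives exactly the stated sumset.

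The only point requiring care is the bookkeeping in the base case \(\phi=q\) and the freshness conditions for the renaming in item~2. Everything else is mechanical, and I expect no genuine obstacle.
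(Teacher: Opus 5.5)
Your argument is correct and is essentially the paper's, which just appeals to structural induction on \(\phi\) for item 1 and on \(\psi\) for item 2; your reading of the typos (the third formula is \(\chi\), and item 2's right-hand side is the sumset of \(\dep(\psi,p)\) and \(\dep(\chi,p)\)) is the intended one. The only deviation is that you derive item 2 from item 1 via the renaming \(\psi[q/p]\) rather than by a second induction. This is sound because \(\psi[q/p][\chi/q]=\psi[\chi/p]\) only needs \(q\neq p\) and \(q\) not occurring in \(\psi\); requiring \(q\) to be fresh for \(\chi\) as well is harmless but unnecessary.
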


The notion of fixed point for the logic \(\wGL{n}\) has been studied previously:  at the introduction of \(\wGL{n}\) without explicit calculations \cite{sacchetti} and later in \cite{kurahashi-okawa} where an explicit calculation is given.
We recall the notion of fixed point of a formula in the following definition.
In addition, we  introduce some properties that our fixed points have to fulfill in order to behave well for interpolation.

\begin{definition}
  Let \(\phi(p), \psi\) be formulas.
  We say that \(\psi\) is a \emph{fixed point} of \(\phi\) with respect to \(p\) in \(L\) if \(\voc(\psi) \subseteq \voc(\phi) \setminus \set{p}\) and \(L \vdash \psi \leftrightarrow \phi(\psi)\).
  Additionally, a fixed point is said to be
  \begin{enumerate}
    \item \emph{Lyndon} if \(\voc_+(\psi) \subseteq \voc_+(\phi) \setminus \set{p}\) and \(\voc_-(\psi) \subseteq \voc_-(\phi) \setminus \set{p}\).
    \item \emph{Depth preserving} if for any variable \(q\), \(\dep_n(\psi,q) \subseteq \dep_n(\phi,q)\).
      \qedhere
  \end{enumerate}
\end{definition}

The fixed points in \(\wGL{n}\) of a formula \(\nec\phi\) with respect to \(p\) where \(\dep_n(\nec\phi,p) \subseteq \set{[0]_n}\) have a particularly easy shape.

\begin{lemma}[\cite{kurahashi-okawa}]
  \label{fixpoints-wGL-deep-0}
  Let \(\phi(p)\) be a formula such that \(\dep_n(\nec\phi(p), p) \subseteq \set{0}\).
  Then \(\nec \phi(\top)\) is a depth preserving Lyndon fixed point of \(\phi\) with respect to \(p\) in \(\wGL{n}\).
\end{lemma}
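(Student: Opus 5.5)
The plan is to verify the three required properties of \(\theta := \nec\phi(\top)\) separately. We treat it as a fixed point of the formula \(\nec\phi(p)\) named in the hypothesis, so the main claim is \(\wGL{n} \vdash \theta \leftrightarrow \nec\phi(\theta)\).

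\emph{Vocabulary, Lyndon and depth conditions.} These are routine. Substituting \(\top\) (which has empty vocabulary and no occurrences of any variable) for \(p\) deletes \(p\) and changes nothing else. Lemma~\ref{substitution-and-polarity}, applied with \(\chi = \top\) (placing \(p\) in either \(\bar p\) or \(\bar q\) according to its polarity), gives \(\voc_b(\theta) \subseteq \voc_b(\nec\phi) \setminus \set{p}\) for \(b \in \set{+,-}\). Lemma~\ref{substitution-and-depth} gives \(\dep(\theta,q) = \dep(\nec\phi,q)\) for \(q \neq p\), and \(\dep(\theta,p) = \varnothing\). Hence \(\theta\) is depth preserving.

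\emph{Two auxiliary facts.} The hypothesis \(\dep_n(\nec\phi,p) \subseteq \set{[0]_n}\), together with the fact that every depth of \(p\) in \(\nec\phi\) is at least \(1\), means that every occurrence of \(p\) in \(\nec\phi\) lies under exactly \(kn\) boxes for some \(k \geq 1\). Equivalently, every occurrence in \(\phi\) lies under \(kn-1\) boxes. The first fact is the usual replacement lemma for \(\K\), proved by induction on \(\phi\):
\[
  \K \vdash \textstyle\bigwedge_{m \in \dep(\phi,p)} \nec^m(p \leftrightarrow r) \to (\phi(p) \leftrightarrow \phi(r)).
\]
The second fact comes from iterating the \(n\)-transitivity lemma \(\nec\chi \to \nec^{n+1}\chi\), which yields \(\wGL{n} \vdash \nec\chi \to \nec^{kn+1}\chi\) for every \(k \geq 0\).

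\emph{Direction \(\theta \to \nec\phi(\theta)\).} Take \(\chi = \phi(\top)\) in the second fact. This gives \(\theta \to \nec^{kn}\theta\) for all \(k \geq 1\), hence \(\theta \to \nec^{kn}(\theta \leftrightarrow \top)\). By replacement, applied to \(\nec\phi\) at depths \(kn\), we get \(\theta \to (\nec\phi(\theta) \leftrightarrow \nec\phi(\top))\). This yields \(\theta \to \nec\phi(\theta)\).

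\emph{Direction \(\nec\phi(\theta) \to \theta\).} This is the main step, and it is where the axiom \(\nec(\nec^n\chi \to \chi) \to \nec\chi\), with \(\chi = \phi(\top)\), is needed. It suffices to prove
\[
  \wGL{n} \vdash \phi(\theta) \to (\nec^n\phi(\top) \to \phi(\top)).
\]
Indeed, \(\NEC\) and \(\Kax\) then give \(\nec\phi(\theta) \to \nec(\nec^n\phi(\top) \to \phi(\top))\), and the axiom yields \(\theta\). To prove the displayed implication, assume \(\nec^n\phi(\top)\). By the second fact applied with \(\chi = \phi(\top)\), we get \(\nec^{kn}\phi(\top) = \nec^{kn-1}\theta\) for all \(k \geq 1\). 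Since the occurrences of \(p\) in \(\phi\) have depth \(kn-1\), replacement gives \(\phi(\theta) \leftrightarrow \phi(\top)\), and so \(\phi(\top)\) follows from \(\phi(\theta)\).

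The only delicate point is this depth bookkeeping: the offset of \(-1\) inside \(\nec\) must line up with the \(n\)-transitivity iterates. The hypothesis on \(\dep_n\) is exactly what makes the two match.
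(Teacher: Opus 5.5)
Your argument is correct, but for the central claim it takes a different route from the paper. The paper does not derive \(\wGL{n} \vdash \nec\phi(\top) \leftrightarrow \nec\phi(\nec\phi(\top))\) itself. It imports this equivalence from Kurahashi--Okawa's general computation of fixed points, and then checks only the Lyndon and depth conditions via Lemmas~\ref{substitution-and-polarity} and~\ref{substitution-and-depth}, just as you do.

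You instead prove the equivalence directly from the axioms. You combine three ingredients:
\begin{enumerate}
  \item replacement of equivalents at prescribed depths in \(\K\);
  \item the iterates \(\nec\chi \to \nec^{kn+1}\chi\) of \(n\)-transitivity;
  \item a single instance of \(\nec(\nec^n\chi \to \chi) \to \nec\chi\) with \(\chi = \phi(\top)\).
\end{enumerate}
This is the natural generalization of the classical argument that \(\nec B(\top)\) is the fixed point of \(\nec B(p)\) in \(\GL\).

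Your version is self-contained and shows exactly where the hypothesis \(\dep_n(\nec\phi,p) \subseteq \set{[0]_n}\) enters. The depths \(kn\) of \(p\) in \(\nec\phi\) are precisely those reached from \(\theta\) by \(n\)-transitivity, and the depths \(kn-1\) of \(p\) in \(\phi\) are those reached from \(\nec^{n-1}\theta = \nec^n\phi(\top)\). The citation buys brevity and places the lemma inside a theory that handles arbitrary depths, which is not needed here.

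Two small points of precision:
\begin{itemize}
  \item In the second direction, getting \(\nec^{kn-1}\theta\) from \(\nec^{n-1}\theta\) uses the second fact with \(\chi = \nec^{n-1}\phi(\top)\), not with \(\chi = \phi(\top)\) directly. Equivalently, necessitate \(\theta \to \nec^{(k-1)n}\theta\) \(n-1\) times.
  \item If \(p\) occurs with both polarities, Lemma~\ref{substitution-and-polarity} does not apply verbatim to a single placement of \(p\). However, \(\top\) contains no variables, so \(\voc_b(\phi(\top)) \subseteq \voc_b(\phi) \setminus \set{p}\) follows by a trivial induction. The paper is equally terse here.
\end{itemize}
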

\begin{proof}
  From \cite{kurahashi-okawa} we have \(\wGL{n} \vdash \nec \phi(\top) \leftrightarrow \nec \phi(\nec \phi(\top))\).
  By Lemma~\ref{substitution-and-polarity} we obtain that \(\voc_b(\nec \phi(\top)) \subseteq \voc_b(\phi) \setminus \set{p}\), and by Lemma~\ref{substitution-and-depth} we obtain that for any variable \(q\), \(\dep_n(\nec \phi(\top),q) \subseteq \dep_n(\nec \phi(p), q)\) as \(\dep_n(\top,q) = \varnothing\).
\end{proof}

Calculating fixed points will not be enough for our purposes, we will need to be able to solve multiple fixed points at the same time.
For that reason we recall the notion of equational system, which is commonly used in the \(\mu\)-calculus (e.g.\ see \cite{muCalcEq}).
In the area of provability logics the equivalent result is usually called generalized fixed points (e.g.\ see \cite{on-the-proof-of-solovay}, called \(n\)-ary fixed point theorem).\footnote{\label{bekic-footnote} In fact, the idea that solving individual fixed points suffices to solve simultaneous fixed points is proven in greater generality in Beki\'c Theorem, see \cite{Bekić1984}.}
We say that \(\phi\) is said to be \emph{modalised in \(p\)} if \(p\) always occurs in \(\phi\) under the scope of a modality \(\nec\) and a \emph{depth specification} on a set \(V \subseteq \var\) is a function \(d : V \function \mathbb{Z}/\mathbb{Z}n\).

\begin{definition}[Kurahashi-Lyndon Equational systems]
  Let \(V_+,V_-\) be vocabularies, \(d\) be a depth function on \(V_+ \union V_-\) and \(\bar{p} = (p_0,\ldots,p_{m-1})\) be a finite sequence of pairwise different variables not occurring in \(V_+ \union V_-\).
  A \emph{Kurahashi-Lyndon equational system over \((\bar{p},V_+,V_-,d)\)} is a set of triples \(\mathcal{E} = \set{(p_i,b_i,\phi_i) \mid i < m}\) such that for each \(i < m\)
  \begin{enumerate}
    \item \(b_i \in \set{+,-}\) and \(\phi_i\) is a formula;
    \item \(\voc_{b_i}(\phi_i) \subseteq V_+ \union B_+\) and \(\voc_{\overline{b_i}}(\phi_i) \subseteq V_- \union B_-\), where \(B_+ = \set{p_j \mid j < m, b_j = +}\) and \\ \(B_- = \set{p_j \mid j < m, b_j = -}\); and
    \item for each \(q \in V_+ \union V_-\), \(\dep_n(\phi_i) \subseteq d(q)\).
  \end{enumerate}
  The elements of \(\bar{p}\) are called \emph{unknowns} and those of \(\mathcal{E}\) are called \emph{equations}.

  A \emph{solution in \(L\)} to \(\mathcal{E}\) is a sequence  \((\psi_0,\ldots,\psi_{m-1})\) such that for each \(i < m\),  we have
  \begin{multicols}{2}
    \begin{enumerate}
      \item \(\voc_{b_i}(\psi_i) \subseteq V_+\) and \(\voc_{\overline{b_i}}(\psi_i) \subseteq V_-\),
      \item \(\dep_n(\psi_i,q) \subseteq d(q)\) for \(q \in V_+ \union V_-\), and
      \item \(L \vdash \psi_i \leftrightarrow \phi_i[\psi_0/p_0,\ldots,\psi_{m-1}/p_{m-1}]\).
    \end{enumerate}
  \end{multicols}

  \(\mathcal{E}\) is said to be
  \begin{multicols}{2}
    \begin{enumerate}
      \item \emph{Solvable in \(L\)} if it has a solution in \(L\).
      \item \emph{Simple} if \(\phi_i\) is a \(\nec\)-formula for \(i < m\).
      \item \emph{Modalized} if \(\phi_{i}\) is modalized in \(p_0,\ldots,p_i\).
      \item \emph{Positive} if \(b_i = {+}\) for  \(i < m\).
      \item \emph{Of depth \(0\)} if for any \(i, j < m\) we have that \(\dep_n(\phi_i,p_j) \subseteq \set{[0]_n}\).
    \end{enumerate}
  \end{multicols}
  Given an equational system \(\mathcal{E}\) over \((\bar{p}, V_+, V_-,d)\) with a solution \((\psi_0,\ldots,\psi_{m-1})\), we will also call the substitution \((\cdot)^*\), where \(p_i^* = \psi_i\) and \(q^* = q\) for \(q\) not in \(\bar{p}\), a solution of \(\mathcal{E}\).
\end{definition}

In general, the shape of fixed points in \(\wGL{n}\) may be complex~\cite{kurahashi-okawa}.
However, as we saw at Lemma~\ref{fixpoints-wGL-deep-0}, the shape gets greatly simplified when the depth of the variable is  \(0\) (modulo \(n\)).
With this simple shape, it is easy to show that the polarity of variables is preserved, a fundamental fact if we want to use modal equational system to show uniform Lyndon interpolation.
For this reason, we will focus only on solving modal equational systems of depth \(0\).
The methodology of the proofs below is similar to the ones  presented in \cite{gls-ulip}, with additional consideration of the depth of the variables.

\begin{lemma}\label{simple-equation-systems-solvable}
  Simple Kurahashi-Lyndon equational systems of depth \(0\) have a solution in \(\wGL{n}\).
\end{lemma}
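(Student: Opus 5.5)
I would proceed by induction on the number \(m\) of equations, eliminating one unknown at a time, Beki\'c-style (cf.\ footnote~\ref{bekic-footnote}), and using Lemma~\ref{fixpoints-wGL-deep-0} for each single fixed point. For \(m=0\) there is nothing to do. For \(m>0\), take the last equation \((p_{m-1},b_{m-1},\phi_{m-1})\). Since the system is simple, \(\phi_{m-1}=\nec\chi\) for some \(\chi\). Since it has depth \(0\), \(\dep_n(\nec\chi,p_{m-1})\subseteq\set{[0]_n}\). Lemma~\ref{fixpoints-wGL-deep-0} then gives the depth preserving Lyndon fixed point \(\theta:=\nec\chi(\top/p_{m-1})\), which is \(\phi_{m-1}(\top/p_{m-1})\). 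It satisfies
\[
\wGL{n}\vdash\theta\leftrightarrow\phi_{m-1}[\theta/p_{m-1}],
\]
and \(\theta\) may still contain \(p_0,\dots,p_{m-2}\) and variables from \(V_+\union V_-\). The key point is that \(\top\) is closed, so substituting it introduces no new variables or depths.

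Next I would define the reduced system \(\mathcal{E}'=\set{(p_i,b_i,\phi_i[\theta/p_{m-1}])\mid i<m-1}\) over \((p_0,\dots,p_{m-2},V_+,V_-,d)\) and check that it is again a simple Kurahashi-Lyndon system of depth \(0\).

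\textbf{Simplicity.} Substitution does not change the outermost \(\nec\).

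\textbf{Depth \(0\).} By Lemma~\ref{substitution-and-depth}, \(\dep(\phi_i[\theta/p_{m-1}],p_j)\) is the union of \(\dep(\phi_i,p_j)\) and sums \(a+c\) with \(a\in\dep(\phi_i,p_{m-1})\) and \(c\in\dep(\theta,p_j)\subseteq\dep(\phi_{m-1},p_j)\). All of these are \(\equiv 0\pmod n\). The same computation for \(q\in V_+\union V_-\) gives depths \(a+c\) with \(a\equiv 0\) and \(c\) in \(d(q)\), so they stay inside \(d(q)\). I am reading condition~3 as \(\dep_n(\phi_i,q)\subseteq\set{d(q)}\), and this is exactly where depth \(0\) of the unknowns matters.

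\textbf{Polarity.} Here I would use Lemma~\ref{substitution-and-polarity}, splitting on whether \(b_{m-1}\) agrees with the polarity at which \(p_{m-1}\) may occur in \(\phi_i\). Condition~2 says \(p_{m-1}\) occurs in \(\phi_i\) only with polarity \(b_i\) if \(b_{m-1}=+\), and only with polarity \(\overline{b_i}\) if \(b_{m-1}=-\). Also, \(\theta\) has its \(b_{m-1}\)-vocabulary in \(V_+\union B_+\) and its \(\overline{b_{m-1}}\)-vocabulary in \(V_-\union B_-\), since \(p_{m-1}\) has been removed. Composing these signs shows that the \(b_i\)-vocabulary of the substituted formula lies in \(V_+\union B'_+\) and the \(\overline{b_i}\)-vocabulary lies in \(V_-\union B'_-\).

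By the induction hypothesis, \(\mathcal{E}'\) has a solution \(\psi_0,\dots,\psi_{m-2}\). I would then put \(\psi_{m-1}:=\theta[\psi_0/p_0,\dots,\psi_{m-2}/p_{m-2}]\).

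\textbf{Verifying the solution.} For \(i<m-1\), the equivalence \(\psi_i\leftrightarrow\phi_i[\bar\psi/\bar p]\) holds because \(\phi_i[\theta/p_{m-1}][\psi/p]\) is syntactically \(\phi_i[\psi_0,\dots,\psi_{m-1}]\). For \(i=m-1\), substitute \(\psi_0,\dots,\psi_{m-2}\) into the fixed point equivalence for \(\theta\); this is legitimate since \(\wGL{n}\) is closed under substitution. The vocabulary condition for \(\psi_{m-1}\) is again Lemma~\ref{substitution-and-polarity}, with the same sign bookkeeping as above. The depth condition is again Lemma~\ref{substitution-and-depth}, using that the unknowns sit at depth \(\equiv0\).

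The main obstacle I expect is the polarity bookkeeping in the substitution steps. One has to verify carefully that mixed-sign unknowns (\(b_j=-\)) flip polarities consistently, so that each \(\psi_i\) has \(b_i\)-vocabulary in \(V_+\) and \(\overline{b_i}\)-vocabulary in \(V_-\). I would do this by a short case table over \((b_i,b_{m-1})\).
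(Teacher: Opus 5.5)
Your proposal is correct and follows the paper's proof essentially step for step: induction on the number of unknowns, solving one equation with the fixed point \(\nec\chi(\top)\) from Lemma~\ref{fixpoints-wGL-deep-0}, substituting it into the remaining equations, checking polarity and depth of the reduced system via Lemmas~\ref{substitution-and-polarity} and~\ref{substitution-and-depth}, and back-substituting the inductive solution. The only difference is that you eliminate \(p_{m-1}\) rather than \(p_0\), which is immaterial for simple systems; also, the paper intends \(d(q)\) to be a set of residues (e.g.\ \(d(q)=\dep_n(\phi,q)\) later), not a single residue as in your reading, but your depth argument goes through unchanged.
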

\begin{proof}
  Let \(\mathcal{E} = \set{(p_i,b_i,\nec \phi_i) \mid i < m}\) be a simple Lyndon equational system over \((\bar{p}, V_+, V_-,d)\), remember that \(B_+ = \set{p_i \mid i < m, b_i = {+}}\), \(B_- = \set{p_i \mid i < m, b_i = {-}}\).
  We proceed by induction on \(m\), the number of unknowns.

  Take \(\nec\phi_0(p_0,\ldots,p_{m-1})\).  Since \(\dep_n(\nec\phi_0,p_0) \subseteq \set{[0]_n}\),  we know it has a depth preserving Lyndon fixed point \(\psi_0\) with respect to \(p_0\) using Lemma~\ref{fixpoints-wGL-deep-0}.
  Note that if \(b_0 = b_i\) then \(p_0 \not \in \voc_-(\phi_i)\) and if \(b_0 \neq b_i\) then \(p_0 \not \in \voc_+(\phi_i)\).
  Then, using Lemma~\ref{substitution-and-polarity}, we have that 
  \begin{align*}
    &\voc_{b_i}(\nec \phi_i[\psi_0/p_0]) \subseteq \voc_{b_i}(\phi_i) \setminus \set{p_0} \union \voc_{b_i}(\psi_0) = \voc_{b_i}(\phi_i) \setminus \set{p_0} \union \voc_{b_0}(\psi_0) \subseteq V_+ \union B_+ \setminus \set{p_0},\\
    &\hspace{14cm}\text{if }b_0 = b_i, \\
    &\voc_{b_i}(\nec \phi_i[\psi_0/p_0]) \subseteq \voc_{b_i}(\phi_i) \setminus \set{p_0} \union \voc_{\overline{b_i}}(\psi_0) = \voc_{b_i}(\phi_i) \setminus \set{p_0} \union \voc_{b_0}(\psi_0) \subseteq V_+ \union B_+ \setminus \set{p_0},\\
    &\hspace{14cm}\text{if }b_0 \neq b_i.
  \end{align*}
  We have a similar argument to show that \(\voc_{\overline{b_i}}(\nec \phi_i[\psi_0/p_0]) \subseteq V_- \union B_- \setminus \set{p_0}\).
  By Lemma~\ref{substitution-and-depth}, we find that for any \(p_j \neq p_0\)
  \[
    \dep_n(\nec \phi_i[\psi_0/p_0],p_j) \subseteq \dep_n(\nec \phi_i,p_j) \union \set{ [k_0 + k_1]_n \mid k_0 \in \dep(\nec \phi_i,p_0), k_1 \in \dep(\psi_0, p_j)} \subseteq \set{[0]_n},
  \]
  as \(\dep_n(\nec \phi_i,p_j) \subseteq \set{[0]_n}\) and \(\dep_n(\psi_0,p_j) \subseteq \dep_n(\nec \phi_0, p_j) \subseteq \set{[0]_n}\).
  Note that for \(p_0\), as there are no occurrences of \(p_0\) in \(\nec \phi_i[\psi_0/p_0]\), so \(\dep_n(\nec \phi_i[\psi_0/p_0]) \subseteq \varnothing\).
  Finally, let \(q \in V_+ \union V_-\), so \(q \neq p_0\), then we have that
  \begin{multline*}
    \dep_n(\nec \phi_i[\psi_0/p_0],q) \subseteq \dep_n(\nec \phi_i,q) \union \set{[k_0 + k_1]_n \mid k_0 \in \dep(\nec \phi_i,p_0), k_1 \in \dep(\psi_0, q)} \subseteq \\
    \dep_n(\nec \phi_i,q) \union \set{[k_1]_n \mid k_1 \in \dep(\psi_0, q)} \subseteq \dep_n(\nec \phi_i,q) \union \dep_n(\nec \phi_0,q) \subseteq d(q),
  \end{multline*}
  where we used that \(\dep_n(\nec \phi_i,p_0) \subseteq \set{[0]_n}\) and \(\dep_n(\psi_0,q) \subseteq \dep_n(\nec \phi_0,q)\).

  So we have that \(\mathcal{E}' = \set{(p_i, b_i, \nec \phi_i[\psi_0/p_0]) \mid 1 \leq i < m}\) is a simple Lyndon equational system of depth~\(0\) over \((p_1 \cdots p_{n-1}, V_+,V_-,d)\) and it is solvable in \(\wGL{n}\) by the induction hypothesis, let \((\chi_1,\ldots,\chi_{m-1})\) be a solution of it.
  Let us define \(\chi_0 = \psi_0[\chi_1/p_1,\ldots,\chi_{m-1}/p_{m-1}]\), then we claim that \((\chi_0,\ldots,\chi_{m-1})\) is a solution of \(\mathcal{E}\).

  \emph{Conditions on polarity}.
  Note that we already have that \(\voc_{b_i}(\chi_i) \subseteq V_+\) and \(\voc_{\overline{b_i}}(\chi_i) \subseteq V_-\) for \(1 \leq i < m\).
  We notice that, by cases on the value of \(b_i\) and \(b_0\), if \(b_i = b_0\) then \(p_i \not\in \voc_-(\psi_0)\) and if \(b_i \neq b_0\) then \(p_i \not \in \voc_+(\psi_0)\) for \(1 \leq i < m\).
  Using Lemma~\ref{substitution-and-polarity} we have that
  \begin{align*}
    \voc_{b_0}(\chi_0) &\subseteq \voc_{b_0}(\psi_0) \setminus \set{p_1,\ldots,p_{m-1}} \union \Union_{\footnotesize \begin{matrix} 1 \leq i < m \\ b_i = b_0 \end{matrix}} \voc_{b_0}(\chi_i) \union \Union_{\footnotesize \begin{matrix} 1 \leq i < m \\ b_i \neq b_0 \end{matrix}} \voc_{\overline{b_0}}(\chi_i) \\
                       &\subseteq \voc_{b_0}(\nec \phi_0) \setminus \set{p_0,\ldots,p_{m-1}} \union \Union_{\footnotesize \begin{matrix} 1 \leq i < m \\ b_i = b_0 \end{matrix}} \voc_{b_i}(\chi_i) \union \Union_{\footnotesize \begin{matrix} 1 \leq i < m \\ b_i \neq b_0 \end{matrix}} \voc_{b_i}(\chi_i) \subseteq V_+.
  \end{align*}
  where we used that \(\voc_{b_0}(\nec \phi_0) \subseteq V_+ \union B_+\).
  An analogous argument shows that \(\voc_{\overline{b_0}}(\chi_0) \subseteq V_-\), so the desired polarities conditions hold.

  \emph{Conditions on depth}.
  Let \(1 \leq i < m\), then for any \(q \in V_+ \union V_-\) we have that \(\dep_n(\chi_i,q) \subseteq d(q)\) since \((\chi_1,\ldots,\chi_{n-1})\) is a solution of \(\mathcal{E}'\).
  Also, for \(\chi_0 = \psi_0[\chi_1/p_1,\ldots,\chi_{m-1}/p_{m-1}]\) we have the following
  \begin{align*}
    \dep_n(\chi_0, q) &\subseteq \dep_n(\psi_0,q) \union \Union_{1 \leq i < m} \set{[k_0 + k_1]_n \mid k_0 \in \dep(\psi, p_i), k_1 \in \dep(\chi_i,q)} \\
    &\subseteq  \dep_n(\psi_0,q) \union \Union_{1 \leq i < m} \set{[k_0 + k_1]_n \mid k_0 \in \dep(\nec\phi_0, p_i), k_1 \in \dep(\chi_i,q)} \\
    &\subseteq  \dep_n(\nec\phi_0,q) \union \Union_{1 \leq i < m} \set{[k_1]_n \mid k_1 \in \dep(\chi_i,q)} \subseteq d(q).
  \end{align*}
  where we used that \(\dep_n(\psi_0,p_i) \subseteq \dep_n(\nec\phi_0,p_i) \subseteq \set{[0]_n}\), \(\dep_n(\nec \phi_0,q) \subseteq d(q)\) and \(\dep_n(\chi_i,q) \subseteq d(q)\).
 
  \emph{Desired equivalences}.
  We have \(\wGL{n} \vdash \chi_i \leftrightarrow (\nec \phi_i[\psi_0/p_0])[\chi_1/p_1, \ldots, \chi_{n}/p_n]\) for \(1 \leq i < m\).
  Using  \(p_0 \neq p_i\) for \(1 \leq i < m\), we obtain  
  \begin{align*}
    (\phi_i[\psi_0/p_0])[\chi_1/p_1, \ldots, \chi_{m-1}/p_{m-1}] 
    &= \phi_i[\psi_0[\chi_1/p_1, \ldots, \chi_{m-1}/p_{m-1}]/p_0, \chi_1/p_1, \ldots, \chi_{m-1}/p_{m-1}]\\
    &= \phi_i[\chi_0/p_0,\chi_1/p_1, \ldots, \chi_{m-1}/p_{m-1}],
  \end{align*}
  as desired.
  It only remains to show \(\wGL{n} \vdash \chi_0 \leftrightarrow \nec\phi_0[\chi_0/p_0, \ldots, \chi_{m-1}/p_{m-1}]\).
  We have,  as \(\psi_0\) is a fixed point,  that \(L \vdash \psi_0 \leftrightarrow \nec\phi_0[\psi_0/p_0]\).
  Since \(\wGL{n}\) is closed under substitutions, we obtain that 
  \[\wGL{n} \vdash \chi_0 \leftrightarrow (\nec\phi_0[\psi_0/p_0])[\chi_1/p_1,\ldots,\chi_{m-1}/p_{m-1}],\]
  and we can use the same equalities as before because of \(p_0 \neq p_i\) for \(1 \leq i < m\).
\end{proof}

A formula \(\phi\) is \emph{positive in \(p\)} if \(p \not \in \voc_-(\phi)\).

\begin{theorem}\label{positive-modalized-equation-systems-solvable}
  We have that
  \begin{enumerate}
    \item Every formula \(\phi(p)\) that is positive and modalized in \(p\) such that \(\dep_n(\phi,p) \subseteq \set{[0]_n}\) has a depth preserving Lyndon fixed point in \(\wGL{n}\).
    \item Positive modalized Lyndon equation systems of depth \(0\) are solvable in \(\wGL{n}\).
  \end{enumerate}
\end{theorem}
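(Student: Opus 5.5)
The plan is to reduce both items to Lemma~\ref{simple-equation-systems-solvable}, following the usual strategy from \cite{gls-ulip}: replace each maximal boxed subformula by a fresh unknown, so that the resulting equations are simple.

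For item~1, let \(\phi(p)\) be positive and modalized in \(p\) with \(\dep_n(\phi,p) \subseteq \set{[0]_n}\). Write \(\phi = \theta(\nec\alpha_0,\ldots,\nec\alpha_{k-1})\), where \(\theta(r_0,\ldots,r_{k-1})\) is \(p\)-free and the \(\nec\alpha_j\) are the maximal boxed subformulas of \(\phi\) containing \(p\). Each \(r_j\) should be a fresh variable, and we record its polarity \(b_j\) as the polarity of that occurrence in \(\phi\). We then consider the system with unknowns \(r_0,\ldots,r_{k-1}\) and equations
\[
  r_j = \nec\alpha_j[\theta(\bar r)/p] \quad\text{with polarity } b_j .
\]
This system is simple. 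Substituting \(\theta(\bar r)\) for \(p\) keeps polarities consistent because \(p\) is positive in \(\phi\): a positive \(p\) inside \(\nec\alpha_j\) receives \(\theta(\bar r)\), in which \(r_i\) has polarity \(b_i\), so the polarity of \(r_i\) in equation \(j\) is \(b_j\) composed with \(b_i\), as condition~2 of the definition requires. For depth~\(0\), an occurrence of \(r_i\) inside \(\nec\alpha_j[\theta/p]\) sits at depth \(d_p + d_{r_i}\), where \(d_p \in \dep(\nec\alpha_j,p)\) and \(d_{r_i} \in \dep(\theta,r_i)=\set{0}\). Moreover \(d_p\) is congruent to \(0\) modulo \(n\), since \(\dep(\phi,p)\) contains exactly these depths. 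Lemma~\ref{substitution-and-depth} then gives depth \(0\). The depth specification on the other variables is \(d(q)=\dep_n(\phi,q)\), and by Lemma~\ref{substitution-and-depth} the \(q\)-depths of the equations stay inside it, again because all \(r_i\) and \(p\) occur at depth \(0 \bmod n\).

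Lemma~\ref{simple-equation-systems-solvable} now yields a solution \(\bar\psi\), and we set \(\chi := \theta(\bar\psi)\). Then \(\nec\alpha_j(\chi) = \nec\alpha_j[\theta(\bar r)/p][\bar\psi/\bar r] \leftrightarrow \psi_j\), so \(\phi(\chi) = \theta(\nec\bar\alpha(\chi)) \leftrightarrow \theta(\bar\psi) = \chi\) by replacement of equivalents. Lyndon-ness and depth preservation of \(\chi\) follow from the solution conditions combined with Lemmas~\ref{substitution-and-polarity} and~\ref{substitution-and-depth}.

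For item~2, I would argue by induction on the number \(m\) of unknowns, exactly as in the proof of Lemma~\ref{simple-equation-systems-solvable}, but using item~1 in place of Lemma~\ref{fixpoints-wGL-deep-0}. Since the system is positive and modalized, \(\phi_0\) is positive and modalized in \(p_0\) with \(p_0\)-depth \(0\), so item~1 gives a depth preserving Lyndon fixed point \(\psi_0(p_1,\ldots,p_{m-1})\). Substituting \(\psi_0\) for \(p_0\) in the remaining equations preserves positivity, modalization in \(p_1,\ldots,p_i\) and depth~\(0\). The polarity and depth bookkeeping is identical to the simple case. Back-substitution of the solution for \(p_1,\ldots,p_{m-1}\) into \(\psi_0\) then gives \(\chi_0\).

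The main obstacle is the bookkeeping in item~1: the signs \(b_j\) of the auxiliary unknowns must be chosen so that condition~2 holds, and we must check that the depth of each unknown stays \(0 \bmod n\) after unfolding \(p\) through \(\theta\). This depends on \(\theta\) placing each \(r_j\) at depth exactly \(0\).
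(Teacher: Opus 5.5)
Your proposal is correct and follows essentially the paper's proof. For item 1 you abstract the outermost boxed subformulas into fresh unknowns signed by the polarity of their occurrence, solve the resulting simple depth-\(0\) system with Lemma~\ref{simple-equation-systems-solvable}, and take \(\theta(\bar\psi)\) as the fixed point; for item 2 you repeat that lemma's induction with item 1 in place of Lemma~\ref{fixpoints-wGL-deep-0}. The only cosmetic difference is that the paper abstracts every maximal boxed subformula, so its \(\phi'\) is box-free, whereas you abstract only those containing \(p\); both choices work for the same reasons.
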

\begin{proof}
  Proof of 1.\footnote{This proof follows the one given in \cite{lindstrom} for \(\GL\), with the addition of the condition on the polarity of variables and depth.}
  Let \(\phi(p)\) be a formula that is modalized in \(p\), positive in \(p\) and such that \(\dep_n(\phi,p) \subseteq \set{[0]_n}\).
  Then there is a formula \(\phi'(q_0,\ldots,q_{m-1}, r_0,\ldots,r_{k-1})\) such that  
  \begin{enumerate}
    \item \(\phi(p) = \phi'(\nec\psi_0(p),\ldots,\nec\psi_{m-1}(p),\nec\chi_0(p),\ldots,\nec\chi_{k-1}(p))\) for some \(\psi_0,\ldots,\psi_{m-1}, \chi_0, \ldots, \chi_{k-1}\);
    \item the variables in \(\bar{q},\bar{r}\) are fresh and pairwise distinct;
    \item \(\phi'\) does not contain any \(\nec\) modality; and
    \item \(\voc_+(\phi') \subseteq \voc_+(\phi) \setminus \set{p} \union \bar{q}\), \(\voc_-(\phi') \subseteq \voc_-(\phi) \setminus \set{p} \union \bar{r}\) (so \(p\) does not occur in \(\phi'\)).
  \end{enumerate}
  In particular, we get \(\bar{q} \cap \voc_-(\phi') = \varnothing\) and \(\bar{r} \cap \voc_+(\phi') = \varnothing\).
  By Lemma~\ref{substitution-and-polarity} we have that for \(b \in \set{+,-}\)
  \[
    \voc_b(\phi) = \voc_{b}(\phi') \setminus \bar{q} \bar{r} \union \Union_{i < m} \voc_b(\psi_i) \union \Union_{j < k} \voc_{\overline{b}}(\chi_j)
    \quad \text{so}
    \quad \voc_b(\psi_i) \subseteq \voc_b(\phi) \text{ and } \voc_b(\chi_j) \subseteq \voc_{\overline{b}}(\phi).
  \]
  As \(p \not \in \voc_-(\phi)\) we immediately obtain  that \(p \not \in \voc_-(\psi_i)\) and \(p \not \in \voc_+(\chi_j)\).

  Since \(\phi'\) does not contain any modality, we have that \(\dep_n(\phi,q_i) \subseteq \set{[0]_n}\) and \(\dep_n(\phi,r_j) \subseteq \set{[0]_n}\).
    By Lemma~\ref{substitution-and-depth} we have that for \(q \in \voc(\phi) \setminus \set{p}\)
    \begin{align*}
      \dep_n(\phi, q) \subseteq \dep_n(\phi',q) &\union \Union_{i < m} \set{[k_0 + k_1]_n \mid k_0 \in \dep(\phi',q_i), k_1 \in\dep(\nec\psi_i,q)} \\
                                                &\union \Union_{j < k} \set{[k_0 + k_1]_n \mid k_0 \in \dep(\phi',r_j), k_1 \in\dep(\nec\chi_j,q)} \\
                                                &\subseteq \dep_n(\phi',q) \union \Union_{i < m} \set{ [k_1]_n \mid k_1 \in\dep(\nec\psi_i,q)}\union \Union_{j < k} \set{ [k_1]_n \mid  k_1 \in\dep(\nec\chi_j,q)}.
    \end{align*}
    Hence \(\dep_n(\nec \psi_i, q) \subseteq \dep_n(\phi,q)\), \(\dep_n(\nec \chi_j,q) \subseteq \dep_n(\phi,q)\) and \(\dep_n(\phi',q) \subseteq \dep_n(\phi,q)\).

  Consider the following equational system
  \( \set{(q_i, +,\nec\psi_i(\phi')) \mid i < m} \union \set{(r_i,-, \nec\chi(\phi')) \mid i < k}.
  \)
  Using Lemma~\ref{substitution-and-polarity} together with the previous observations, it is straightforward to check that it is a simple Lyndon \(\big(\bar{q}\bar{r}, V_+, V_-, d\big)\)-equational system where \(V_+ = \voc_+(\phi)\setminus \set{p}\), \(V_- = \voc_-(\phi)\setminus \set{p}\) and \(d(q) = \dep_n(\phi,q)\).
  Then, by Lemma~\ref{simple-equation-systems-solvable},  it has a solution \((\cdot)^*\) in \(\wGL{n}\).
  Let us define \(\eta = \phi'(q^*_0, \ldots, q^*_{m-1}, r^*_0, \ldots,r^*_{k-1})\).  
By Lemma~\ref{substitution-and-polarity} we know that \(\voc_+(\eta) \subseteq \voc_+(\phi) \setminus \set{p}\) and \(\voc_-(\eta) \subseteq \voc_-(\phi) \setminus \set{p}\); and by Lemma~\ref{substitution-and-depth} \(\dep_n(\eta,q) \subseteq \dep_n(\phi,q)\).

  Finally, since \((\cdot)^*\) is a solution of the equational system, we have that \(\wGL{n} \vdash q^*_i \leftrightarrow \nec \psi_i(\eta)\) for \(i < n\) and \(\wGL{n} \vdash r^*_i \leftrightarrow \nec \chi_i(\eta)\) for \(i < m\).
  So we obtain (using propositional reasoning) that
  \[\wGL{n} \vdash \eta \leftrightarrow \phi'(\nec \psi_0(\eta), \ldots,\nec \psi_{m-1}(\eta), \nec \chi_0(\eta), \ldots,\nec \chi_{k-1}(\eta)). \]
  In other words, \(\wGL{n} \vdash \eta \leftrightarrow \phi(\eta)\), so \(\eta\) is a Lyndon fixed point of \(\phi\) with respect to \(p\).

  Proof of 2. The proof is similar to the second point of Lemma~\ref{simple-equation-systems-solvable} using that if \(\phi_i\) is modalized in \(p_0,\ldots,p_i\) then \(\phi_i[\psi/p_0]\) is also modalized in \(p_0,\ldots,p_i\).
\end{proof}

\subsection{Local progress proof theory}\label{subsec:local-progress-proof-theory}

Local progress proof theory has been introduced in~\cite{coalgebraic}.  We will quickly recall the basic definitions and main results.

Let us start by fixing the notions concerning trees.
Given a set \(X\), we will write \(X^*\) (\(X^+\)) for the set of (non-empty) finite sequences with elements in \(X\), \(\epsilon\)  denotes the empty sequence.
As usual, we  write \(w \leq v\) to mean that \(w\) is an initial prefix of \(v\) and \((w,v]\) to mean the set \(\set{u \in X^* \mid w < u \leq v}\).
A (finitely branching) tree on \(A\) is a function \(T\) whose image is contained in \(A\) and whose domain is a prefix-closed non-empty subset of \(\mathbb{N}^*\) such that for every \(w \in \domain(T)\) there is an unique natural number \(k\) (called the \emph{arity of \(w\) in~\(T\)}) such that \(wi \in \domain(T)\) iff \(i < k\).
The \(wi \in \domain(T)\) are also called the \emph{immediate successors of \(w\)}.
Note that the domain of any tree always contains the word \(\epsilon\), and it is called the \emph{root of \(T\)}.

The elements of \(\domain(T)\) are also called the \emph{nodes of \(T\)}, the \(0\)-ary nodes are called \emph{leaves},  and the rest of the nodes are called \emph{interior nodes}.
Finally, an \emph{infinite branch in a tree \(T\)} is sequence of nodes \((w_i)_{i \in \mathbb{N}}\) such that \(w_0 = \epsilon\) and for each \(i\) there is a \(j\) such that \(w_{i+1} = w_i j\).

We fix a set \(\text{Seq}\) whose elements we  call \emph{sequents}. 
A sequent rule is a subset of \(\text{Seq}^+\).
Given a rule \(R\), we say  \((S_0,\ldots,S_{n-1},S)\) is an \emph{instance of \(R\) with premises \(S_0,\ldots,S_{n-1}\) and conclusion \(S\)} if \mbox{\((S_0,\ldots,S_{n-1},S) \in R\)}.
A rule \(R\) is said to be \emph{\(n\)-ary} if \(R \subseteq \text{Seq}^{n+1}\).
We now introduce the kind of sequent calculi we are going to use, called \emph{local progress  calculi}\/~\cite{coalgebraic}.

\begin{definition}
  A \emph{local progress sequent calculus} is a pair \(\mathcal{G} = (\mathcal{R}, (L_R)_{R \in \mathcal{R}})\) where \(\mathcal{R}\) is a set of sequent rules and each \(L_R\) is a function that takes an instance \(r = (S_0,\ldots,S_{n-1},S)\) of \(R\) and returns a subset of \(\set{0,\ldots,n-1}\).
  \(\mathcal{G}\) is said to be \emph{wellfounded} if each \(L_R\) is the constant function returning \(\varnothing\).
\end{definition}

We are ready to define the notion of proof in a local progress calculus.
From the definition of a proof,  we can infer that a wellfounded sequent calculus is just a sequent calculus in the usual (wellfounded) proof theory.

\begin{definition}
  Given a local progress sequent calculus \(\mathcal{G} = (\mathcal{R}, (L_R)_{R \in \mathcal{R}})\), a \emph{preproof in \(\mathcal{G}\)} is a tree with labels in \(\text{Seq} \times \mathcal{R}\) such that for each node \(w\) with immediate successors \(w0,\ldots,w(n-1)\) we have that \((S_0,\ldots,S_{n-1},S) \in R\) where \(S\) is the sequent at \(w\), \(R\) is the rule at \(w\) and \(S_i\) is the sequent at \(wi\).

  Let \(w\) be a node in \(\pi\) with immediate successors \(w0,\ldots,w(n-1)\).
  We say that \(wi\) is a \emph{progressing node} if \(i \in L_R(r)\) where \(R\) is the rule at \(w\) and \(r = (S_0,\ldots,S_{n-1},S)\) where \(S\) is the sequent at \(w\) and \(S_i\) the sequent at \(wi\).
  A proof is a preproof in which any infinite branch has infinitely many progressing nodes.
\end{definition}

We will write \(\mathcal{G} \vdash S\) to express that \(S\) is provable in \(\mathcal{G}\) and \(\pi \vdash_{\mathcal{G}} S\) to mean that \(\pi\) is a proof of \(S\) in \(\mathcal{G}\), omitting the subscript \(_\mathcal{G}\) when it is clear from context.
It is standard to write an instance \((S_0,\ldots,S_{n-1},S)\) of a rule \(R\) as
\[
  \AxiomC{\(S_0\)}
  \AxiomC{\(\cdots\)}
  \AxiomC{\(S_{n-1}\)}
  \RightLabel{\(R\)}
  \TrinaryInfC{\(S\)}
  \DisplayProof
\]
Given a proof \(\pi\) in \(\mathcal{G}\) we  define its main local fragment as the finite tree obtained from cutting the tree at the first progressing nodes from the root (in particular removing the progressing nodes).
Figure~\ref{fig:fragments} shows a picture of how a proof in a local progress calculus looks like.
The gray triangle at the bottom is the main local fragment, while the circular nodes are non-progressing nodes and the square nodes progressing nodes.
The \emph{local height of \(\pi\)}, denoted \(\lhg(\pi)\), is the height of its main local fragment.
Thes set of \emph{local rules of \(\pi\)}, denoted \(\lrul(\pi)\), is the set of rules occurring in the main local fragment.
We  say that \(\pi\) is \emph{locally \(R\)-free} if \(R \not \in\lrul(\pi)\).
Note that if \(\mathcal{G}\) is a wellfounded calculus,  the notion of local height agrees with the usual notion of height and the local rules is the set of rules occurring in the proof.
\begin{figure}
	\centering
	\begin{tikzpicture}[scale=0.8]
		\filldraw[gray!50] (0,-0.5) -- (3.5,2.5) -- (-3.5,2.5) -- cycle;
		\filldraw (0,0) circle (2pt);

		\filldraw (0,1) circle (2pt); \draw (0,0) -- (0,1);

		\filldraw[gray!50] (-3,2.5) -- (-4,4.5) -- (-2,4.5) -- cycle;
		\filldraw[gray!50] (-1,2.5) -- (-2,3.5) -- (0,3.5) -- cycle;
		\filldraw[gray!50] (2,2.5) -- (0,4.5) -- (4,4.5) -- cycle;
    \filldraw (-2,2) circle (2pt); \draw (0,1) -- (-2,2);
    \filldraw (2,2) circle (2pt); \draw (0,1) -- (2,2);

		\filldraw (-3.1,2.9) rectangle (-2.9,3.1); \draw (-2,2) -- (-3,3);
		\filldraw (-1.1,2.9) rectangle (-0.9,3.1); \draw (-2,2) -- (-1,3);
		\filldraw (1.9,2.9) rectangle (2.1,3.1); \draw (2,2) -- (2,3);

		\filldraw (-3,4) circle (2pt); \draw (-3,3) -- (-3,4); 
		\node at (-3,5) {\large\(\vdots\)};
		\filldraw (1,4) circle (2pt); \draw (2,3) -- (1,4); 
		\filldraw (3,4) circle (2pt); \draw (2,3) -- (3,4); 
	\end{tikzpicture}
	\caption{Structure of proofs in local progress calculi}
  	\label{fig:fragments}
\end{figure}

Given a local progress calculus \(\mathcal{G} = (\mathcal{R},(L_R)_{R \in \mathcal{R}})\) and a rule \(R'\), we define \(\mathcal{G} + R'\) as the local progress calculus with rules \(\mathcal{R} \union \set{R'}\) and \(L_{R'}\) the constant function returning \(\varnothing\).
Rules can interact with local progress calculi in different ways, the following definition introduces some of these ways and the theorem below shows that some of them are equivalent.

\begin{definition}\label{def:prop-of-rules}
  Let \(\mathcal{G}\) be a sequent calculus and \(R\) be a rule.
  We say that \(R\) is
  \begin{enumerate}
    \item \(R\) admissible if for any instance \((S_0,\ldots,S_{n-1},S) \in R\) we have that \(\pi_i \vdash_{\mathcal{G}} S_i\) for \(i < n\) implies the existence of \(\pi \vdash_{\mathcal{G}} S\).
      In addition we say that \(R\) is \emph{admissible preserving local height} if \(\lhg(\pi) \leq \max_{i < n}\lhg(\pi_i)\) and
        \emph{admissible preserving local rules} if \(\lrul(\pi) \subseteq \Union_{i < n}\lrul(\pi)\).
    \item \(R\) eliminable if \(\mathcal{G} + R\vdash S\) implies \(\mathcal{G} \vdash S\). 
    \item \(R\) locally admissible if for any instance \((S_0,\ldots,S_{n-1},S) \in R\) we have that if there are locally \(R\)-free \(\pi_i \vdash_{\mathcal{G}} S_i\)  for \(i < n\) then there is a locally \(R\)-free \(\pi \vdash_{\mathcal{G}} S\).
    \item An \(n\)-ary rule \(R\) is \(i\)-invertible  in \(\mathcal{G}\) (where \(i < n\)) if the rule
      \(
        \set{(S,S_i) \mid (S_0,\ldots,S_{n-1},S) \in R}
      \)
      is admissible.
      \(R\) is \emph{invertible} if it is \(i\)-invertible for each \(i < n\).
  \end{enumerate}
We will talk about invertibility preserving local height and/or local rules with the obvious meaning.
\end{definition}
A discussion of this terminology can be found in~\cite{bimodal-ulip}.  Note that according to our definition, eliminability of a rule means that the rule can be dropped from a calculus without affecting the provability of sequents.  If we want to talk about the effectiveness of a cut elimination procedure,  we will explicitly say effective cut elimination.

Given an admissible rule \(R\) in \(\mathcal{G}\),  an instance \((S_0,\ldots,S_{n-1},S) \in R\) and proofs \(\pi_i \vdash_{\mathcal{G}} S_i\) for \(i < n\), we  write \(R(\pi_0,\ldots,\pi_{n-1})\) to mean a proof of \(S\) in \(\mathcal{G}\) that exists by admissibility.
For the invertibility of a rule~\(R\) we  write  \(\inv{R}(\pi)\) instead.

The following is the key result of local progress proof theory (for details see \cite{proofth-interpretability,uip-interpretability}).

\begin{theorem}\label{local-adm-and-eliminability}
  Let \(\mathcal{G}\) be a local progress sequent calculus, then \(R\) is eliminable in \(\mathcal{G}\) iff \(R\) is locally admissible in \(\mathcal{G}\).
  If \(\mathcal{G}\) is wellfounded, then both are equivalent to \(R\) is admissible in \(\mathcal{G}\).
\end{theorem}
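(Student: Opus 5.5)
We prove the three implications ``eliminable \(\Rightarrow\) locally admissible'', ``locally admissible \(\Rightarrow\) eliminable'', and, for wellfounded \(\mathcal{G}\), ``admissible \(\Leftrightarrow\) locally admissible''.

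\textbf{Eliminable implies locally admissible.} Take an instance \((S_0,\ldots,S_{n-1},S)\) of \(R\) with locally \(R\)-free proofs \(\pi_i \vdash_{\mathcal{G}} S_i\). Since \(R\) is a rule of \(\mathcal{G}+R\) with empty progress set, the preproof consisting of one application of \(R\) on top of the \(\pi_i\) is a proof in \(\mathcal{G}+R\): every infinite branch passes through the new root and then runs inside some \(\pi_i\), so it still has infinitely many progressing nodes. Eliminability then yields a proof \(\pi \vdash_{\mathcal{G}} S\). This proof contains no \(R\) at all, hence is locally \(R\)-free.

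\textbf{Locally admissible implies eliminable.} This is the substantial direction. Given \(\pi \vdash_{\mathcal{G}+R} S\), I will build a \(\mathcal{G}\)-proof corecursively, one main local fragment at a time. The main local fragment of \(\pi\) is finite, so I proceed by induction on it, bottom-up from the first progressing nodes. At each such progressing node \(wi\), the subproof \(\pi_{wi}\) is replaced, via a corecursive call, by a \(\mathcal{G}\)-proof \(\tau_{wi}\) of the same sequent. Each \(\tau_{wi}\) is trivially locally \(R\)-free, and the progressing edge leading into it is kept.

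Inside the fragment, I replace each rule application in order:
\begin{itemize}
  \item an application of a \(\mathcal{G}\)-rule whose premises already have locally \(R\)-free proofs is kept;
  \item an application of \(R\) is removed using local admissibility, which supplies a locally \(R\)-free proof of its conclusion.
\end{itemize}
The result at the root is a locally \(R\)-free proof \(\tau\).

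\textbf{Main obstacle.} Local admissibility only guarantees that the main local fragment of the new proof avoids \(R\). Above its first progressing nodes, the proof supplied by local admissibility may still contain \(R\). So a single step does not give an \(R\)-free proof, and one has to iterate and take a limit. To make this work I would define the transformation as a corecursive map \(\Phi\) sending \((\mathcal{G}+R)\)-proofs to locally \(R\)-free \((\mathcal{G}+R)\)-proofs of the same sequent. Then I apply \(\Phi\) repeatedly: to the whole proof, then to each subproof sitting at a progressing node of the resulting main fragment, and so on.

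Two points must be checked for this limit:
\begin{itemize}
  \item \emph{Convergence.} Each round fixes everything up to and including one more layer of progressing nodes, so every finite depth eventually stabilises and the limit tree is well defined and \(R\)-free. This needs finite branching plus an induction on the number of progressing layers.
  \item \emph{Correctness.} Any infinite branch of the limit crosses infinitely many layers, and each layer boundary is a progressing node. Hence the limit is a genuine \(\mathcal{G}\)-proof.
\end{itemize}
The delicate point is making sure the local fragments stay finite and that progress marks survive the gluing. For this I would rely on the convention that the labels of rule instances in \(\mathcal{G}\) are left unchanged.

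\textbf{Wellfounded case.} Here all \(L_R\) are empty, so there are no progressing nodes. The main local fragment is then the whole (finite) proof, "locally \(R\)-free" simply means "\(R\)-free", and every \(\mathcal{G}\)-proof is \(R\)-free. So local admissibility coincides literally with admissibility, and the equivalence with eliminability follows from the first part. In this case the second direction is also just the familiar induction on height, removing topmost applications of \(R\) one by one.
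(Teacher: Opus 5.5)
Your argument is correct and follows the standard route from the local progress literature that the paper cites rather than proving the theorem itself. The easy direction applies \(R\) once and then uses eliminability. The hard direction first turns any \((\mathcal{G}+R)\)-proof into a locally \(R\)-free one by induction on its finite main local fragment, then continues corecursively at the first progressing nodes, which guards the corecursion and gives infinitely many progressing nodes on every infinite branch. This hard direction relies, as you implicitly assume, on reading local admissibility with the premise proofs as locally \(R\)-free proofs in \(\mathcal{G}+R\); the \(\vdash_{\mathcal{G}}\) in the paper's definition must be meant this way, since otherwise the notion would collapse to plain admissibility.
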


Finally, sometimes we  need to work with non-wellfounded proofs that have a particular finite representation.
To define this notion,  we introduce the notion of trees with backedges.
A \emph{tree with backedges} is an ordered pair \(\tau = (T,(\cdot)^\circ)\) such that \(T\) is a finite tree and \((\cdot)^\circ\) is a function from a subset of the leaves of~\(\tau\) to the nodes of \(\tau\) such that if \(w\) is in the domain then \(w^\circ < w\), the nodes in the domain of \((\cdot)^\circ\) are called \emph{repeat nodes}.

\begin{definition}
  Given a local progress calculus \(\mathcal{G} = (\mathcal{R}, (L_R)_{R \in \mathcal{R}})\), a \emph{cyclic preproof in \(\mathcal{G}\)} is a tree with backedges \(\pi = (T, (\cdot)^\circ)\) on \(\text{Seq} \times \mathcal{R}\) such that for any non-repeat node \(w\) with immediate successors \(w0\), \ldots, \(w(n-1)\), we have that \((S_0,\ldots,S_{n-1},S) \in R\), where \(S\) is the sequent at \(w\),  \(R\) is the rule at \(w\) and \(S_i\) is the sequent at \(wi\),
    and for any repeat node \(w\), \(w\) and \(w^\circ\) have the same sequent and rule.
    A \emph{cyclic proof in \(\mathcal{G}\)} is a cyclic preproof such that for any repeat node \(w\) there is a progressing node in \((w^\circ,w]\).
\end{definition}
In cyclic preproofs,  we  annotate repeat nodes with the word \(\rep\) at the right, instead of the rule at the node.

The \emph{main local fragment} of a cyclic proof is defined like for non-wellfounded proofs.
Note that it is always a tree without backedges.
A rule \(R\) will be \emph{cyclic admissible} if for any instance \((S_0,\ldots,S_{n-1},S) \in R\) we have that for any cyclic proofs \(\pi_i \vdash_{\mathcal{G}} S_i\) for \(i < n\), then there is a cyclic proof \(\pi \vdash_{\mathcal{G}} S\).
We define the notions of \emph{cyclic invertibility} and \emph{cyclic local admissibility} analogously.
For these notion we will also talk about preserving local height and local rules with its obvious meaning.

To work with cyclic proofs more easily, it is useful to introduce the following concept.
A cyclic proof~\(\pi\) is said to be \emph{locally backedge free} if for any \(w \in \rep(T)\) then \(w^\circ\) is not in the main local fragment of \(\pi\).
It is straightforward to see that any provable sequent with a cyclic proof has a locally backedge free cyclic proof, the process of obtaining it is effective and we will call it \emph{unfolding}.
Notice that unfolding preserves the local height and the local rules of a proof.

\section{Wellfounded calculus}
\label{sec:wellfounded-calculus}

A \emph{sequent} is an ordered pair \((\Gamma, \Delta)\) of finite multisets of formulas, usually denoted as \(\Gamma \Rightarrow \Delta\).
\(\Gamma\) is called the \emph{left side} of the sequent while \(\Delta\) is called the \emph{right side}.
Given a multiset of formulas \(\Gamma\) we will write \(\Gamma^s\) to mean the set obtained by deleting repetitions from \(\Gamma\).

The rules needed for the wellfounded calculus of \(\wGL{n}\) can be found at Figure~\ref{fig:rules-gwGL}.
The notation $\Box^n \Sigma$ denotes the multiset that results from prefixing each element of $\Sigma$ with $\Box^n$.
Further we use $\necd^n \Sigma$ for $\nec^n \Sigma, \Sigma$.
In the rules \((\botR)\), \((\toL)\), \((\toR)\) and \((\modal{\wGL{n}})\) the formula displayed at the conclusion is called the \emph{principal formula} (of the rule instance) and the displayed \(\nec\)-formula in the left side of the premise of the rule \((\modal{\wGL{n}})\) is called \emph{diagonal formula}.
In \((\modal{\wGL{n}})\) the formulas belonging to \(\nec \Sigma\) are called \emph{auxiliary formulas}.
Given a rule instance of \((\botR)\), \((\toL)\), \((\toR)\) or \((\modal{\wGL{n}})\) the auxiliary formulas together with the principal formula will be called the \emph{active formulas} (of the rule instance).
In \((\ax)\), \((\botL)\) and \((\modal{\wGL{n}})\) the multisets \(\Gamma\) and \(\Delta\) are called the \emph{weakening part} (of the rule instance).
Note that the weakening part of a rule instance can be changed arbitrarily and it will still be a rule instance of the same rule.

We have omitted the usual propositional rules \((\negL)\), \((\negR)\), \((\wedgeL)\), \((\wedgeR)\), \((\veeL)\) and \((\veeR)\).
They can easily be simulated with the rules \((\toL)\), \((\toR)\), \((\botL)\), \((\botR)\).
From now on we will use the rules \((\negL)\), \((\negR)\), \((\wedgeL)\), \((\wedgeR)\), \((\veeL)\), \((\veeR)\) as abbreviations for corresponding application of rules.

\begin{figure}
  \[
    \AxiomC{}
    \RightLabel{\(\ax\)}
    \UnaryInfC{\(p, \Gamma \Rightarrow p, \Delta\)}
    \DisplayProof
    \qquad
    \AxiomC{}
    \RightLabel{\(\botL\)}
    \UnaryInfC{\(\bot, \Gamma \Rightarrow \Delta\)}
    \DisplayProof
    \qquad
    \AxiomC{\(\Gamma \Rightarrow \Delta\)}
    \RightLabel{\(\botR\)}
    \UnaryInfC{\(\Gamma \Rightarrow  \bot,\Delta\)}
    \DisplayProof
  \]

  \[
    \AxiomC{\(\Gamma \Rightarrow \phi, \Delta\)}
    \AxiomC{\(\psi, \Gamma \Rightarrow \Delta\)}
    \RightLabel{\(\toL\)}
    \BinaryInfC{\(\phi \to \psi, \Gamma \Rightarrow \Delta\)}
    \DisplayProof
    \qquad
    \AxiomC{\(\phi, \Gamma \Rightarrow \psi, \Delta\)}
    \RightLabel{\(\toR\)}
    \UnaryInfC{\(\Gamma \Rightarrow \phi \to \psi, \Delta\)}
    \DisplayProof
  \]

  \[
    \AxiomC{\(\necd^n \Sigma, \nec^n \phi \Rightarrow \phi\)}
    \RightLabel{\(\modal{\wGL{n}}\)}
    \UnaryInfC{\(\nec \Sigma, \Gamma \Rightarrow \nec \phi, \Delta\)}
    \DisplayProof
  \]
  \caption{Sequent rules for \(\g{\wGL{n}}\)}
  \label{fig:rules-gwGL}
\end{figure}

\begin{definition}
  The wellfounded sequent calculus \(\g{\wGL{n}}\), for \(n \geq 1\), is given by the rules in Figure~\ref{fig:rules-gwGL}.\footnote{This calculus is a minor variation of the calculus introduced in \cite{iwata}.}
\end{definition}

By induction on the complexity of \(\phi\), we obtain the following lemma.
\begin{lemma}
  For any formula \(\phi\), \(\g{\wGL{n}} \vdash \phi, \Gamma \Rightarrow \phi, \Delta\).
\end{lemma}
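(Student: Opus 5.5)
We argue by induction on the complexity \(|\phi|\), showing simultaneously for all multisets \(\Gamma,\Delta\) that \(\g{\wGL{n}} \vdash \phi, \Gamma \Rightarrow \phi, \Delta\). Since the weakening part of \((\ax)\), \((\botL)\) and \((\modal{\wGL{n}})\) can be chosen arbitrarily, no separate weakening lemma is needed.

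\emph{Base cases.} If \(\phi = p\), the sequent is an instance of \((\ax)\). If \(\phi = \bot\), it is an instance of \((\botL)\), with \(\bot\) on the right simply absorbed into \(\Delta\).

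\emph{Implication.} If \(\phi = \phi_0 \to \phi_1\), we first apply \((\toR)\), which reduces the goal to \(\phi_0, \phi_0 \to \phi_1, \Gamma \Rightarrow \phi_1, \Delta\). An application of \((\toL)\) then gives two premises:
\begin{itemize}
  \item \(\phi_0, \Gamma \Rightarrow \phi_0, \phi_1, \Delta\);
  \item \(\phi_1, \phi_0, \Gamma \Rightarrow \phi_1, \Delta\).
\end{itemize}
Both are provable by the induction hypothesis for \(\phi_0\) and \(\phi_1\), with the extra formulas placed in the context.

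\emph{Modality.} If \(\phi = \nec \psi\), we apply \((\modal{\wGL{n}})\) with \(\Sigma = \set{\psi}\), the remaining \(\Gamma,\Delta\) forming the weakening part. The premise is \(\nec^n \psi, \psi, \nec^n \psi \Rightarrow \psi\), since \(\necd^n \Sigma = \nec^n\psi, \psi\) and the diagonal formula is \(\nec^n\psi\). This premise has the form \(\psi, \Gamma' \Rightarrow \psi\) with \(\Gamma' = \nec^n\psi, \nec^n\psi\), so it follows from the induction hypothesis for \(\psi\).

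The only step needing care is the modal case. Unlike \(\K\)-style rules, \((\modal{\wGL{n}})\) adds the extra formulas \(\nec^n\Sigma\) and the diagonal formula \(\nec^n\phi\), and these might look like an obstacle. They are harmless, however, because the induction hypothesis is stated with arbitrary contexts on the left, and the premise of the rule still contains the unboxed \(\psi\) on both sides.
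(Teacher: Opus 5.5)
Your proof is correct and takes the same approach as the paper, which simply states that the lemma follows by induction on the complexity of \(\phi\). Your case analysis fills in the details the paper leaves implicit, and the modal case checks out: applying the modal rule with \(\Sigma = \set{\psi}\) reduces the goal to \(\psi, \nec^n\psi, \nec^n\psi \Rightarrow \psi\), which the induction hypothesis covers.
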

When we use this lemma inside a proof in \(\g{\wGL{n}}\), we will annotate the sequent with the rule \((\Ax)\).

In Figure~\ref{fig:structural-rules}, we give the usual rules of weakening, contraction and cut.
We will show that our calculus behaves  structurally well, i.e.,  these rules are eliminable and, in the case of weakening and contraction, preserve height.
In addition, the propositional rules are invertible.
These basic facts are shown in the following lemma, while cut elimination is left for Section~\ref{sec:cut-elimination}, as the proof will be more involved.

\begin{figure}
  \[
    \AxiomC{\(\Gamma \Rightarrow \Delta\)}
    \RightLabel{\(\wk\)}
    \UnaryInfC{\(\Gamma, \Gamma' \Rightarrow \Delta, \Delta'\)}
    \DisplayProof
    \qquad
    \AxiomC{\(\Gamma, \Gamma' \Rightarrow \Delta,\Delta'\)}
    \RightLabel{\(\ctr\)}
    \UnaryInfC{\(\Gamma, (\Gamma')^s \Rightarrow \Delta, (\Delta')^s\)}
    \DisplayProof
  \]
  We recall that for a multiset \(\Theta\), \(\Theta^s\) is the set obtained by deleting repetitions in \(\Theta\).

  \[
    \AxiomC{\(\Gamma \Rightarrow \Delta, \chi\)}
    \AxiomC{\(\chi, \Gamma \Rightarrow \Delta\)}
    \RightLabel{\(\cut\)}
    \BinaryInfC{\(\Gamma \Rightarrow \Delta\)}
    \DisplayProof
  \]
  
  \caption{Structural rules}
  \label{fig:structural-rules}
\end{figure}

\begin{lemma}
  In \(\g{\wGL{n}}\) and \(\g{\wGL{n}} + \cut\) we have the following.
  \begin{enumerate}
    \item \(\wk\) is eliminable and admissible preserving height and rules.
    \item \(\botR\), \(\toL\) and \(\toR\) are invertible preserving height and rules.
    \item \(\ctr\) is eliminable and admissible preserving height and rules.
  \end{enumerate}
\end{lemma}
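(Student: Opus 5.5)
We prove the three items in the order stated, since item~2 uses item~1 and item~3 uses item~2. Each argument is an induction on the height of the given proof, distinguishing the last rule applied. The arguments are uniform for \(\g{\wGL{n}}\) and \(\g{\wGL{n}} + \cut\): in the latter, \(\cut\) is treated like any other rule, with a context that can be modified freely in both premises.

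\emph{Weakening.} We show by induction on the height of \(\pi \vdash \Gamma \Rightarrow \Delta\) that \(\Gamma,\Gamma' \Rightarrow \Delta,\Delta'\) has a proof of no greater height using only rules of \(\pi\). The cases split as follows.
\begin{itemize}
\item For \((\ax)\), \((\botL)\) and \((\modal{\wGL{n}})\), the weakening part of the last rule can be changed arbitrarily, so we only enlarge it. For \((\modal{\wGL{n}})\) the premise stays unchanged.
\item For \((\botR)\), \((\toL)\), \((\toR)\) and \(\cut\), we apply the induction hypothesis to the premises and reapply the rule.
\end{itemize}
Since admissibility in a wellfounded calculus gives eliminability by Theorem~\ref{local-adm-and-eliminability}, eliminability follows.

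\emph{Invertibility.} For \((\toR)\), from \(\pi \vdash \Gamma \Rightarrow \phi\to\psi,\Delta\) we build \(\phi,\Gamma \Rightarrow \psi,\Delta\) by induction on height.
\begin{itemize}
\item If \(\phi\to\psi\) is principal, we take the premise.
\item If the last rule is \((\ax)\), \((\botL)\) or \((\modal{\wGL{n}})\) with \(\phi\to\psi\) in the weakening part, we replace it by \(\phi\) on the left and \(\psi\) on the right, which yields an instance of the same rule. The right principal formula of \((\modal{\wGL{n}})\) is a \(\nec\)-formula, so the occurrence must lie in the weakening part.
\item Otherwise we use the induction hypothesis on the premises and reapply the rule.
\end{itemize}
\((\toL)\) (for each premise) and \((\botR)\) are handled the same way. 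For \((\botR)\), if \(\bot\) lies in the weakening part of an axiom we simply drop it.

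\emph{Contraction.} It suffices to treat contracting two copies of a single formula \(\chi\) on one side and then iterate, since iteration preserves height and rules. We do simultaneous induction on the height of \(\pi \vdash \chi,\chi,\Gamma\Rightarrow\Delta\) (and its right-hand analogue).
\begin{itemize}
\item If neither copy is principal, we use the induction hypothesis and reapply the rule. For \((\ax)\), \((\botL)\) and \((\modal{\wGL{n}})\) we delete a copy from the weakening part or from \(\nec\Sigma\). The premise of \((\modal{\wGL{n}})\) then has fewer formulas than needed; we handle this by weakening, or by contracting the premise using the induction hypothesis, which applies because \(\necd^n\) duplicates formulas.
\item If one copy is principal in a propositional rule, we invert the other copy in the premises by item~2, preserving height, and then apply the induction hypothesis. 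For example, for \((\toL)\) on \(\phi\to\psi\), we invert the second copy in both premises, contract \(\phi\) (resp.\ \(\psi\)) and reapply \((\toL)\).
\item If a copy is principal in \((\modal{\wGL{n}})\) on the right, the other copy of \(\nec\phi\) lies in the weakening part and is simply dropped.
\item If a copy of \(\nec\psi \in \nec\Sigma\) occurs twice among the auxiliary formulas, the premise contains \(\nec^n\psi,\nec^n\psi,\psi,\psi\). We contract these by the induction hypothesis, applied repeatedly to the premise, which has smaller height.
\end{itemize}

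The main obstacle is keeping the modal contraction case and the \(\cut\) case compatible with preservation of height. For \(\cut\) with a duplicated formula in the context, we contract in both premises by the induction hypothesis and reapply \(\cut\); no interaction with the cut formula is needed, since contraction is only applied to context formulas. With this, contraction is admissible preserving height and rules, and hence eliminable by Theorem~\ref{local-adm-and-eliminability}.
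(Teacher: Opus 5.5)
Your proposal is correct and follows the paper's approach: induction on the height of the given proof. The modal and \(\cut\) cases are handled exactly as you describe, by enlarging or shrinking the weakening part, or by commuting the transformation into the premises. One small slip: if you delete a copy of \(\nec\psi\) from \(\nec\Sigma\), the old premise of \((\modal{\wGL{n}})\) has too \emph{many} formulas, not too few, so weakening cannot help. What does the job is either deleting the copy that sits in the weakening part, or contracting \(\nec^n\psi,\psi\) in the premise by the induction hypothesis, as in your last bullet.
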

\begin{proof}
  The proof is by induction on the height of the proof.
\end{proof}

Thanks to the good structural behaviour of the calculus, it is easy to show the correspondence between \(\wGL{n}\) and \(\g{\wGL{n}}\).
First, we  show that any formula provable in \(\wGL{n}\) is also provable in \(\g{\wGL{n}}+ \cut\).
This was also established in \cite{iwata} simultaneously with  (non-effectively) cut admissibility.
Here we give a purely syntactic proof.

\begin{lemma}\label{from-wGL-to-gwGL}
  If \(\wGL{n} \vdash \phi\) then \(\g{\wGL{n}} + \cut \vdash { \Rightarrow \phi}\).
\end{lemma}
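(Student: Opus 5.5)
We argue by induction on the length of a Hilbert-style derivation of \(\phi\) in \(\wGL{n}\), showing that every theorem yields a proof of \({\Rightarrow \phi}\) in \(\g{\wGL{n}} + \cut\). Classical tautologies are handled in the usual way: the propositional rules \((\ax)\), \((\botL)\), \((\botR)\), \((\toL)\), \((\toR)\), together with admissibility of \(\wk\), give a complete calculus for classical propositional logic. Modus ponens reduces to cut. From proofs of \({\Rightarrow \phi \to \psi}\) and \({\Rightarrow \phi}\), invertibility of \((\toR)\) gives \({\phi \Rightarrow \psi}\). Weakening the proof of \({\Rightarrow \phi}\) to \({\Rightarrow \psi, \phi}\) and cutting on \(\phi\) then yields \({\Rightarrow \psi}\).

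Necessitation needs a small trick, since the only modal rule is \((\modal{\wGL{n}})\). From \(\pi \vdash {\Rightarrow \phi}\), weakening gives \({\nec^n \phi \Rightarrow \phi}\). This is an instance of the premise of \((\modal{\wGL{n}})\) with \(\Sigma = \varnothing\), so one application of the rule gives \({\Rightarrow \nec\phi}\). The axiom \(\Kax\) is derived directly. Take \(\Sigma = \set{\phi \to \psi, \phi}\) and diagonal formula \(\nec^n\psi\). The premise \(\nec^n(\phi\to\psi), \nec^n\phi, \phi \to \psi, \phi, \nec^n \psi \Rightarrow \psi\) follows by \((\toL)\) and \((\Ax)\). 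Applying \((\modal{\wGL{n}})\) gives \(\nec(\phi\to\psi), \nec\phi \Rightarrow \nec\psi\), and two applications of \((\toR)\) finish.

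The main step is the axiom \(\nec(\nec^n\phi \to \phi) \to \nec\phi\). We apply \((\modal{\wGL{n}})\) with \(\Sigma = \set{\nec^n\phi \to \phi}\) and principal formula \(\nec\phi\). It then suffices to derive
\[
\nec^n(\nec^n\phi\to\phi),\ \nec^n\phi\to\phi,\ \nec^n\phi \Rightarrow \phi .
\]
This follows from \((\toL)\) on \(\nec^n\phi\to\phi\): the left premise \(\ldots, \nec^n\phi \Rightarrow \nec^n\phi, \phi\) and the right premise \(\phi, \ldots \Rightarrow \phi\) are both instances of \((\Ax)\). A final \((\toR)\) gives \({\Rightarrow \nec(\nec^n\phi\to\phi)\to\nec\phi}\).

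The only delicate point is that the modal rule has no context on the left of its premise besides \(\necd^n\Sigma\) and the diagonal formula. We therefore have to check that each derivation above fits the rule's shape exactly, weakening in the conclusion where needed. No real obstacle is expected: cut is used only for modus ponens, and the rest is routine.
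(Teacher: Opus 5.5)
Your proposal is correct and matches the paper's proof. Both show that the set of \(\g{\wGL{n}}+\cut\)-provable formulas contains the tautologies, \(\Kax\) and the \(\Lax[n]\)-axiom, and is closed under modus ponens and necessitation, using the same four derivations: a single \((\toL)\) closed by \((\Ax)\) followed by \((\modal{\wGL{n}})\) for the two axioms, weakening plus \((\toR)\)-inversion plus cut for modus ponens, and weakening to \(\nec^n\phi \Rightarrow \phi\) followed by the modal rule for necessitation.
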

\begin{proof}
  It suffices to show that the set \(\set{\phi \mid \g{\wGL{n}} + \cut \vdash { \Rightarrow \phi}}\) contains the propositional tautologies, \((\Kax)\), \((\Lax[n])\) and is closed under \((\MP)\) and \((\NEC)\).
  That it contains all propositional tautologies is easy to show using the presence of the rules \((\ax)\), \((\botL)\), \((\toL)\) and \((\toR)\).
  Let us show the proofs of the other axioms:
  \[
    \AxiomC{\(\)}
    \RightLabel{\(\Ax\)}
    \UnaryInfC{\( \nec^n (\phi \to \psi), \necd^n \phi, \nec^n \psi \Rightarrow  \phi, \psi\)}
    \AxiomC{\(\)}
    \RightLabel{\(\Ax\)}
    \UnaryInfC{\( \psi, \nec^n (\phi \to \psi), \necd^n \phi, \nec^n \psi \Rightarrow  \psi\)}
    \RightLabel{\(\toL\)}
    \BinaryInfC{\( \necd^n (\phi \to \psi), \necd^n \phi, \nec^n \psi \Rightarrow  \psi\)}
    \RightLabel{\(\modal{\wGL{n}}\)}
    \UnaryInfC{\( \nec(\phi \to \psi), \nec \phi \Rightarrow  \nec \psi\)}
    \doubleLine\RightLabel{\(\toR\)}
    \UnaryInfC{\( \Rightarrow \nec(\phi \to \psi) \to \nec \phi \to \nec \psi\)}
    \DisplayProof
  \]
  \vspace{0.3cm}
  \[
    \AxiomC{}
    \RightLabel{\(\Ax\)}
    \UnaryInfC{\( \nec^n(\nec^n \phi \to \phi), \nec^n \phi \Rightarrow \nec^n \phi, \phi\)}
    \AxiomC{}
    \RightLabel{\(\Ax\)}
    \UnaryInfC{\( \phi, \nec^n(\nec^n \phi \to \phi), \nec^n \phi \Rightarrow \phi\)}
    \RightLabel{\(\toL\)}
    \BinaryInfC{\( \necd^n(\nec^n \phi \to \phi), \nec^n \phi \Rightarrow \phi\)}
    \RightLabel{\(\modal{\wGL{n}}\)}
    \UnaryInfC{\( \nec(\nec^n \phi \to \phi) \Rightarrow  \nec \phi\)}
    \RightLabel{\(\toR\)}
    \UnaryInfC{\( \Rightarrow \nec(\nec^n \phi \to \phi) \to \nec \phi\)}
    \DisplayProof
  \]
  
  For closure under \((\MP)\) assume that \(\pi \vdash { \Rightarrow \phi \to \psi}\) and \(\tau \vdash { \Rightarrow \phi}\) are proofs in \(\g{\wGL{n}} + \cut\).
  Then the desired proof is
  \[
    \AxiomC{\(\wk(\tau)\)}
    \noLine
    \UnaryInfC{\( \Rightarrow \phi, \psi\)}
    \AxiomC{\(\inv_{\toR}(\pi)\)}
    \noLine
    \UnaryInfC{\( \phi \Rightarrow \psi\)}
    \RightLabel{\(\cut\)}
    \BinaryInfC{\( \Rightarrow \psi\)}
    \DisplayProof
  \]
  For closure under necessitation assume that \(\pi \vdash { \Rightarrow \phi}\) in \(\g{\wGL{n}} + \cut\).
  Then the desired proof is
  \[
    \AxiomC{\(\wk(\pi)\)}
    \noLine
    \UnaryInfC{\(\nec^n \phi \Rightarrow \phi\)}
    \RightLabel{\(\modal{\wGL{n}}\)}
    \UnaryInfC{\(\Rightarrow \nec \phi\)}
    \DisplayProof
  \]
\end{proof}

For the converse direction of the theorem we will need to translate sequents into formulas.
The translation is defined as follows.
\[
  (\Gamma \Rightarrow \Delta)^\flat := \bigwedge \Gamma \to \bigvee \Delta.
\]
We will say that a rule \(R\) is \emph{sound in \(\wGL{n}\)} if for any instance \((S_0,\ldots,S_{n-1},S) \in R\) we have that \(\wGL{n} \vdash S^\flat_{i}\) for \(i < n\) implies \(\wGL{n} \vdash S^\flat\).

\begin{lemma}\label{soundness-gwGL}
  All the rules of \(\g{\wGL{n}} + \cut\) are sound in \(\wGL{n}\).
\end{lemma}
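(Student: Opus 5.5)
We go through the rules one by one. For each rule instance \((S_0,\ldots,S_{k-1},S)\) we show that \(\wGL{n} \vdash S_0^\flat, \ldots, \wGL{n} \vdash S_{k-1}^\flat\) implies \(\wGL{n} \vdash S^\flat\).

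\emph{Propositional and structural rules.} For \((\ax)\), \((\botL)\), \((\botR)\), \((\toL)\), \((\toR)\) and \((\cut)\), the implication \(S_0^\flat \wedge \cdots \wedge S_{k-1}^\flat \to S^\flat\) is a classical propositional tautology. For the zero-premise rules, \(S^\flat\) is itself a tautology. Since \(\wGL{n}\) contains all tautologies and is closed under \((\MP)\), these rules are sound. For \((\cut)\), note that the two premise translations are \(\bigwedge\Gamma \to \bigvee\Delta \vee \chi\) and \(\chi \wedge \bigwedge \Gamma \to \bigvee \Delta\), which together propositionally yield \(\bigwedge\Gamma\to\bigvee\Delta\).

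\emph{The modal rule.} This is the only real case. Assume
\[
\wGL{n} \vdash \textstyle\bigwedge \nec^n\Sigma \wedge \bigwedge \Sigma \wedge \nec^n\phi \to \phi .
\]
Put \(\sigma := \bigwedge \Sigma\); the empty conjunction is \(\top\) and is handled the same way. By the \(\K\)-distribution of \(\nec^n\) over conjunctions, \(\bigwedge\nec^n\Sigma \leftrightarrow \nec^n \sigma\). Hence \(\wGL{n} \vdash \nec^n\sigma \wedge \sigma \to (\nec^n \phi \to \phi)\). Applying \((\NEC)\) and \((\Kax)\) gives
\[
\wGL{n} \vdash \nec(\nec^n\sigma \wedge \sigma) \to \nec(\nec^n\phi \to \phi).
\]
By the \(n\)-transitivity lemma of \cite{sacchetti} (\(\nec\sigma\to\nec^{n+1}\sigma\)) together with normality, \(\nec\sigma \to \nec\nec^n\sigma \wedge \nec \sigma\), so \(\nec\sigma \to \nec(\nec^n\sigma\wedge\sigma)\). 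Chaining these implications with the axiom \(\nec(\nec^n\phi\to\phi)\to\nec\phi\) yields \(\nec\sigma \to \nec\phi\). Since \(\bigwedge \nec\Sigma \to \nec\sigma\) in \(\K\), we obtain \(\wGL{n}\vdash \bigwedge\nec\Sigma \to \nec\phi\). Weakening by \(\Gamma\) and \(\Delta\) is propositional, and this gives \((\nec\Sigma,\Gamma\Rightarrow\nec\phi,\Delta)^\flat\).

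The only step requiring care is the modal rule, where the \(n\)-transitivity lemma is needed to supply the boxed context \(\nec^{n}\Sigma\) under the outer box. The remaining manipulations are routine \(\K\)-reasoning about box distribution over finite conjunctions.
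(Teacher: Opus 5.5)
Your proof is correct and follows essentially the same route as the paper. The propositional rules and cut are handled via tautologies. For $(\modal{\wGL{n}})$, the paper uses necessitation plus $(\Kax)$, then $n$-transitivity to obtain $\nec\necd^n\Sigma$ from $\nec\Sigma$, and then the axiom $\nec(\nec^n\phi\to\phi)\to\nec\phi$, exactly as you do; packaging $\Sigma$ into the single conjunction $\sigma$ is only a cosmetic variant of the paper's elementwise step.
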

\begin{proof}
  Let \(R\) be one of \((\ax)\), \((\botL)\), \((\botR)\), \((\toL)\), \((\toR)\) or \((\cut)\).
  Then for any instance \((S_0,\ldots,S_{n-1},S) \in R\) we have that \(\bigwedge_{i < n} S^\flat_i \to S^\flat\) is a propositional tautology, so \(\wGL{n} \vdash \bigwedge_{i < n} S^\flat_i \to S^\flat\).
  From here it is trivial to show that the rule is sound in \(\wGL{n}\).

  Finally, assume that \(R = (\modal{\wGL{n}})\), and let us have one of its instances:
  \[
    \AxiomC{\(\necd^n \Sigma, \nec^n \phi \Rightarrow \phi\)}
    \UnaryInfC{\(\nec \Sigma, \Gamma \Rightarrow \nec \phi, \Delta\)}
    \DisplayProof
  \]
  Then we have the following reasoning.
  \begin{align*}
    &\wGL{n} \vdash \bigwedge \necd^n \Sigma \wedge \nec^n \phi \to \phi,
    &&\text{by assumption}, \\
    &\wGL{n} \vdash \bigwedge \necd^n \Sigma \to (\nec^n \phi \to \phi),
    &&\text{by propositional reasoning}, \\
    &\wGL{n} \vdash \bigwedge \nec\necd^n \Sigma \to \nec(\nec^n \phi \to \phi),
    &&\text{by \((\NEC)\) and (\(\Kax\))}, \\
    &\wGL{n} \vdash \bigwedge \nec \Sigma \to \nec(\nec^n \phi \to \phi),
    &&\text{by \(n\)-transitivity}, \\
    &\wGL{n} \vdash \bigwedge \nec \Sigma \to \nec\phi,
    &&\text{by \((\Lax[n])\)}, \\
    &\wGL{n} \vdash \bigwedge \nec \Sigma \wedge \bigwedge \Gamma \to \nec\phi \vee \bigvee \Delta,
    &&\text{by propositional reasoning}. \qedhere
  \end{align*}
\end{proof}

\begin{theorem}
  For any sequent \(S\) we have that \(\wGL{n} \vdash S^\flat\) iff \(\g{\wGL{n}} + \cut \vdash S\).
\end{theorem}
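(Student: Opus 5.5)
We prove the two directions separately, using Lemma~\ref{soundness-gwGL} for the right-to-left direction and Lemma~\ref{from-wGL-to-gwGL} together with the structural lemma for the left-to-right direction.

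\emph{From right to left.} Suppose \(\g{\wGL{n}} + \cut \vdash S\) via a (finite) proof \(\pi\). We argue by induction on the height of \(\pi\) that every sequent \(S'\) occurring in \(\pi\) satisfies \(\wGL{n} \vdash S'^\flat\). For a leaf the rule is \((\ax)\) or \((\botL)\), a zero-premise rule, so soundness gives \(\wGL{n} \vdash S'^\flat\) outright. For an interior node, the induction hypothesis applies to the premises, and soundness of the rule at that node (Lemma~\ref{soundness-gwGL}) transfers provability to the conclusion. Applied at the root, this yields \(\wGL{n} \vdash S^\flat\).

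\emph{From left to right.} Suppose \(\wGL{n} \vdash S^\flat\), where \(S = \Gamma \Rightarrow \Delta\). Lemma~\ref{from-wGL-to-gwGL} gives a proof of \({\Rightarrow} \bigwedge\Gamma \to \bigvee\Delta\) in \(\g{\wGL{n}}+\cut\). We must recover \(\Gamma \Rightarrow \Delta\) itself.
\begin{enumerate}
  \item Apply invertibility of \((\toR)\) to obtain \(\bigwedge \Gamma \Rightarrow \bigvee \Delta\).
  \item Decompose the conjunction on the left and the disjunction on the right by repeatedly inverting the rules \((\wedgeL)\) and \((\veeR)\). These are abbreviations built from \((\toL)\), \((\toR)\), \((\botR)\), so their inverses follow from invertibility of those basic rules, together with weakening where a side premise must be discarded.
  \item This leaves \(\Gamma \Rightarrow \Delta\), possibly with extra occurrences of \(\bot\) on the right and \(\top\) on the left, depending on the convention for the empty conjunction and disjunction. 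Remove the \(\bot\)'s by inverting \((\botR)\). Remove the \(\top\)'s either by a cut against \({\Rightarrow}\top\) or by inverting \((\toL)\) on \(\top = \bot\to\bot\): its right premise carries \(\bot\) on the left, and the left premise is \(\Gamma \Rightarrow \bot, \Delta\), from which \(\bot\) is dropped by inverting \((\botR)\).
\end{enumerate}

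A simpler alternative avoids all inversions: cut the obtained proof against the derivable sequent \(\bigwedge\Gamma\to\bigvee\Delta, \Gamma \Rightarrow \Delta\), which holds by \((\toL)\) and instances of \((\Ax)\) after unfolding \(\wedge\) and \(\vee\), with weakening; this is permitted since \(\cut\) is available.

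The only point requiring care is bookkeeping: \(\wedge\) and \(\vee\) are defined through \(\neg\) and \(\to\), so each inversion step must be checked against the actual primitive rules, and the empty-conjunction/disjunction convention must be handled. Beyond this, no real obstacle is expected.
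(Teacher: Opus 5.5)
Your proof is correct and matches the paper's argument: right-to-left by induction on the height of the \(\g{\wGL{n}}+\cut\) proof using Lemma~\ref{soundness-gwGL}, and left-to-right by Lemma~\ref{from-wGL-to-gwGL} followed by invertibility of the propositional rules to turn \({\Rightarrow}\bigwedge\Gamma\to\bigvee\Delta\) into \(\Gamma\Rightarrow\Delta\). Your extra bookkeeping (unfolding the defined \(\wedge\) and \(\vee\), the empty conjunction/disjunction, and the cut-based shortcut) spells out what the paper leaves implicit in the phrase ``using invertibility''.
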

\begin{proof}
  First assume that \(\wGL{n} \vdash \bigwedge \Gamma \to \bigvee \Delta\), by Lemma~\ref{from-wGL-to-gwGL} we have that \(\g{\wGL{n}} + \cut \vdash { \Rightarrow \bigwedge \Gamma \to \bigvee \Delta}\).
  Then using invertibility we obtain that \(\g{\wGL{n}} + \cut \vdash \Gamma \Rightarrow \Delta\), as desired.
  For the if direction it suffices to use an induction on the height on the \(\g{\wGL{n}} + \cut\) proof and use the soundness of the rules (see Lemma~\ref{soundness-gwGL}).
\end{proof}

\section{A syntactic proof of cut elimination}
\label{sec:cut-elimination}

In \cite{iwata} the author asks if there is an effective cut elimination procedure for \(\g{\wGL{n}}\).
Here we will describe a syntactic cut elimination method for \(\g{\wGL{n}}\).
Even if, as it is currently described in this paper, the procedure may not look effective (as we will use non-wellfounded proofs) we will argue at Subsection~\ref{subsec:effectiveness-of-cut-elimination} that it can easily be made effective.\footnote{We have two motivations for putting the non-effective methodology (and not its effective version) as the principal one in this text. First, it can be argued to be conceptually simpler, as keeping the non-wellfounded proofs cyclic will require some (simple, but technical) additional work. Secondly, with the effective methodology the cut elimination is defined only for cyclic proofs and not for the whole class of non-wellfounded proofs.}.

The procedure we will use for cut elimination was introduced in \cite{coalgebraic} and has been succesfully applied to many provability logics \cite{proofth-interpretability, gls-ulip, bimodal-ulip}.
To be more precise, we will define a local progress sequent calculus \(\n{\wGL{n}}\).
We will show how to translate proofs from \(\g{\wGL{n}} + \cut\) to \(\n{\wGL{n}} + \cut\) and from \(\n{\wGL{n}}\) to \(\g{\wGL{n}}\).
Then we will show cut elimination in \(\n{\wGL{n}}\) and the cut elimination for \(\g{\wGL{n}}\) will follow immediately.
This procedure can be pictured as follows:
\[\begin{tikzcd}
	{\g{\wGL{n}} + \cut} && {\n{\wGL{n}} + \cut} \\
	{\g{\wGL{n}}} && {\n{\wGL{n}}}
	\arrow[from=1-1, to=1-3]
	\arrow[from=1-3, to=2-3]
	\arrow[from=2-1, to=1-1]
	\arrow[from=2-3, to=2-1]
\end{tikzcd}\]

\subsection{Non-wellfounded calculus}
\label{subsec:non-wellfounded-calculus}

The non-wellfounded calculus  \(\n{\wGL{n}}\) is defined like \(\g{\wGL{n}}\) with the modal rule replaced by the rule given in Figure~\ref{fig:rules-for-nwGL}.

\begin{figure}
  \[
  \AxiomC{\(\necd^n \Sigma \Rightarrow \phi\)}
  \RightLabel{\(\modal{\wKT{n}}\)}
  \UnaryInfC{\(\nec \Sigma, \Gamma \Rightarrow \nec \phi, \Delta\)}
  \DisplayProof
  \]
  \caption{Rules for \(\n{\wGL{n}}\)}
  \label{fig:rules-for-nwGL}
\end{figure}

\begin{definition}
  We define the local progress sequent calculus \(\n{\wGL{n}}\) as the one with the rules \((\ax)\), \((\botL)\), \((\botR)\), \((\toL)\) and \((\toR)\) from Figure~\ref{fig:rules-gwGL} and the rule \((\modal{\wKT{n}})\) from Figure~\ref{fig:rules-for-nwGL}.
  Progress is only made at the premise of \((\modal{\wKT{n}})\).
\end{definition}

Again, it is easy to show  the following lemma by induction on the complexity of \(\phi\).
\begin{lemma}
  For any formula \(\phi\) and finite multisets \(\Gamma,\Delta\) we have that \(\n{\wGL{n}} \vdash \phi, \Gamma \Rightarrow \phi, \Delta\).
\end{lemma}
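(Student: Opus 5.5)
We argue by induction on the complexity \(|\phi|\), constructing for each \(\phi\) a proof of \(\phi, \Gamma \Rightarrow \phi, \Delta\) in \(\n{\wGL{n}}\) uniformly in \(\Gamma\) and \(\Delta\). Since the weakening part of every rule instance can be changed arbitrarily, it is enough to build a proof for arbitrary side multisets. All the proofs we build are in fact finite, so the global progress condition on infinite branches holds vacuously.

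\emph{Base cases.} If \(\phi = p\), the sequent is an instance of \((\ax)\). If \(\phi = \bot\), it is an instance of \((\botL)\).

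\emph{Implication.} If \(\phi = \phi_0 \to \phi_1\), we apply \((\toR)\) and then \((\toL)\):
\[
  \AxiomC{\(\phi_0, \Gamma \Rightarrow \phi_0, \phi_1, \Delta\)}
  \AxiomC{\(\phi_1, \phi_0, \Gamma \Rightarrow \phi_1, \Delta\)}
  \RightLabel{\(\toL\)}
  \BinaryInfC{\(\phi_0 \to \phi_1, \phi_0, \Gamma \Rightarrow \phi_1, \Delta\)}
  \RightLabel{\(\toR\)}
  \UnaryInfC{\(\phi_0 \to \phi_1, \Gamma \Rightarrow \phi_0 \to \phi_1, \Delta\)}
  \DisplayProof
\]
Both leaves are closed by the induction hypothesis for \(\phi_0\) and for \(\phi_1\), each taken with suitable side multisets.

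\emph{Modal case.} If \(\phi = \nec \psi\), we apply \((\modal{\wKT{n}})\) with \(\Sigma = \{\psi\}\) and weakening part \(\Gamma, \Delta\). This leaves the premise
\[
  \nec^n \psi, \psi \Rightarrow \psi,
\]
since \(\necd^n \Sigma = \nec^n \psi, \psi\). This premise is closed by the induction hypothesis for \(\psi\), with \(\nec^n\psi\) placed in the left side multiset. Unlike the rule \((\modal{\wGL{n}})\) of \(\g{\wGL{n}}\), no diagonal formula appears in the premise here, so no extra bookkeeping is needed.

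The only point requiring care is the modal case: we must check that the rule's premise \(\necd^n\Sigma\) contains \(\psi\) itself. This holds by the definition \(\necd^n \Sigma = \nec^n\Sigma, \Sigma\), so the axiom lemma goes through directly.
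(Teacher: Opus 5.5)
Your proof is correct and takes the same approach as the paper, which only remarks that the lemma follows by induction on the complexity of \(\phi\). Your case analysis fills in exactly those details: \((\ax)\) and \((\botL)\) for the base cases, \((\toR)\) followed by \((\toL)\) for implications, and a single \((\modal{\wKT{n}})\) step with \(\Sigma = \{\psi\}\) for boxes. The resulting proofs are indeed finite, so the progress condition is vacuous.
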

When this lemma is invoked inside a proof in \(\n{\wGL{n}}\) we will annotate the sequent with the rule \((\Ax)\).
Note that this calculus also behaves well structurally, as the next lemma shows.

\begin{lemma}
  In \(\n{\wGL{n}}\) and \(\n{\wGL{n}} + \cut\) we have the following.
  \begin{enumerate}
    \item \(\wk\) is eliminable and admissible preserving local height and local rules.
    \item \(\botR\), \(\toL\) and \(\toR\) are invertible preserving local height and local rules.
    \item \(\ctr\) is eliminable and admissible preserving local height and local rules.
  \end{enumerate}
\end{lemma}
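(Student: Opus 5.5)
The plan is to handle non-wellfounded proofs by coinduction (corecursion on the proof tree) instead of induction on height, since proofs in \(\n{\wGL{n}}\) may be infinite. In each case I build the new proof by copying the main local fragment with the transformation applied, and reusing the proofs above the modal rule. The progress condition then holds because infinite branches of the new proof correspond to infinite branches of the old one, passing through \((\modal{\wKT{n}})\) at the same places. The statements about local height and local rules follow because the transformation never adds rule applications inside the main local fragment: it only relabels sequents, or deletes a rule application in the inversion case.

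\emph{Weakening.} Given \(\pi \vdash \Gamma \Rightarrow \Delta\), I define \(\wk(\pi)\) corecursively by adding \(\Gamma',\Delta'\) to every sequent of the main local fragment. At an \((\ax)\) or \((\botL)\) leaf, or at a \((\modal{\wKT{n}})\) node, the added formulas are absorbed into the weakening part. For \((\modal{\wKT{n}})\) the premise \(\necd^n\Sigma \Rightarrow \phi\) and the subproof above it stay unchanged. At \((\botR)\), \((\toL)\) and \((\toR)\) nodes I add the side formulas to the premises and continue. Since this transformation only descends through the finite main local fragment before reaching a modal rule or an axiom, no real corecursion is needed. Local height and local rules are preserved, and eliminability follows from Theorem~\ref{local-adm-and-eliminability}, because admissibility of this kind implies local admissibility.

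\emph{Invertibility.} I proceed by induction on local height. To invert \((\toR)\) on \(\phi\to\psi\), I follow the occurrence of \(\phi\to\psi\) downward through the main local fragment. There are three cases. If the occurrence is principal, the premise is already the desired sequent. If the occurrence is in the weakening part of an axiom or of a modal rule, I replace it by \(\phi\) on the left and \(\psi\) on the right, which still gives an instance of the same rule. Otherwise I apply the induction hypothesis to the premises. \((\toL)\) and \((\botR)\) are handled the same way. No rule is added, so local height and local rules are preserved.

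\emph{Contraction.} This is the main obstacle. I argue by induction on the local height of \(\pi \vdash \Gamma,\Gamma' \Rightarrow \Delta,\Delta'\), with a case split on the last rule. If neither duplicated copy is principal, I apply the induction hypothesis to the premises. If one copy is principal in a propositional rule, I invert that rule on the other copy in the premises, which by the previous step preserves local height. The resulting premises have strictly smaller local height, so I can apply the induction hypothesis, possibly several times for subformulas, and then reapply the rule. If the last rule is \((\modal{\wKT{n}})\), the premise \(\necd^n\Sigma \Rightarrow \phi\) may contain duplicates, and these are not removable by induction on local height, because the subproof above that premise is outside the main local fragment. Here corecursion is required. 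I first drop duplicates in the conclusion's weakening part and in \(\nec\Sigma\) as needed. Then I take the new premise to be the contracted form of the old premise and continue building the proof by corecursively contracting the subproof above it. Every infinite branch passes through infinitely many modal rules, so this corecursion is guarded by the progressing node and yields a well-defined proof. Its main local fragment is no higher than the original one. Finally, all cases go through verbatim in \(\n{\wGL{n}}+\cut\), since the cut is treated as just another nonprogressing rule in which side formulas are weakened, inverted or contracted in both premises.
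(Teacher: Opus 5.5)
Your argument is correct in substance and follows the paper's route, which is stated only as ``induction on the local height''. Weakening and the three inversions never leave the main local fragment. For contraction you correctly isolate the one non-local step, namely duplicates inside \(\nec\Sigma\) at a \((\modal{\wKT{n}})\) node, and you handle it with a corecursive call guarded by the progressing premise \(\necd^n\Sigma \Rightarrow \phi\).

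The one part of the statement you do not establish is that \(\ctr\) is \emph{eliminable}. Your argument for \(\wk\) does not carry over. For \(\wk\) you use that a construction which only rewrites the main local fragment gives local admissibility, and hence eliminability by Theorem~\ref{local-adm-and-eliminability}. Your \(\ctr\) construction, however, recurses into the subproof above the modal premise. In the local-admissibility setting that subproof is a proof in \(\n{\wGL{n}}+\ctr\) that is only \emph{locally} \(\ctr\)-free, so it may contain \(\ctr\) nodes, and your case split on the last rule does not cover them.

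The fix is small. To prove local admissibility, replace the corecursive call in the modal case by an explicit \(\ctr\) instance placed on top of \(\necd^n\Sigma \Rightarrow \phi\). That node is progressing, so it lies outside the main local fragment. The rest of your induction on local height then goes through unchanged, since the inversions also act only on the main local fragment. Theorem~\ref{local-adm-and-eliminability} then gives eliminability of \(\ctr\), both in \(\n{\wGL{n}}\) and in \(\n{\wGL{n}}+\cut\).
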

\begin{proof}
  The proof is by induction on the local height.
\end{proof}

\subsection{Translating proofs}\label{subsec:translations}

For the translation from \(\g{\wGL{n}} + \cut\) to \(\n{\wGL{n}} + \cut\) it suffices to show that L\"ob's rule is admissible in \(\g{\wGL{n}} + \cut\).

\begin{lemma}[L\"obs rule \cite{iwata}]
  The rule
  \[
    \AxiomC{\(\necd^n \Sigma, \nec^n \phi \Rightarrow \phi\)}
    \RightLabel{\(\lob\)}
    \UnaryInfC{\(\necd^n \Sigma \Rightarrow \phi\)}
    \DisplayProof
  \]
  is admissible in \(\g{\wGL{n}} + \cut\).
\end{lemma}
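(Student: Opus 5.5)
The plan is a direct derivation in \(\g{\wGL{n}} + \cut\): apply the modal rule to the given premise, lift the result under \(n-1\) further boxes, and then cut. Suppose \(\pi \vdash \necd^n \Sigma, \nec^n \phi \Rightarrow \phi\) in \(\g{\wGL{n}} + \cut\).

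\emph{Step 1.} Since \(\necd^n \Sigma\) is by definition \(\nec^n \Sigma, \Sigma\), the sequent proved by \(\pi\) is exactly a premise of \((\modal{\wGL{n}})\), with \(\Sigma\) as its auxiliary multiset and empty weakening part. One application of the rule therefore gives a proof of
\[
\nec \Sigma \Rightarrow \nec \phi.
\]

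\emph{Step 2.} Next I would establish a derived necessitation rule. From any proof of \(\Gamma \Rightarrow \psi\), admissibility of \(\wk\) gives \(\nec^n\Gamma, \Gamma, \nec^n \psi \Rightarrow \psi\), that is, \(\necd^n \Gamma, \nec^n\psi \Rightarrow \psi\). Then \((\modal{\wGL{n}})\) yields \(\nec\Gamma \Rightarrow \nec \psi\). Applying this rule \(n-1\) times to the sequent from Step 1 produces a proof of
\[
\nec^n \Sigma \Rightarrow \nec^n \phi.
\]
For \(n=1\) this step is vacuous.

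\emph{Step 3.} Weakening the sequent from Step 2 gives \(\necd^n \Sigma \Rightarrow \nec^n \phi\). Weakening is admissible in \(\g{\wGL{n}} + \cut\) by the structural lemma. We then cut this sequent against the premise \(\necd^n \Sigma, \nec^n \phi \Rightarrow \phi\) with cut formula \(\nec^n\phi\). The result is \(\necd^n\Sigma \Rightarrow \phi\), as required.

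No step is a real obstacle. The only point needing care is the observation in Step 1 that the premise of \(\lob\) already has the exact shape of a premise of \((\modal{\wGL{n}})\). The construction is clearly effective, but it introduces a cut, which is acceptable because the lemma only claims admissibility in \(\g{\wGL{n}} + \cut\).
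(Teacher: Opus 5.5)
Your proof is correct and is essentially the paper's argument: $n$ applications of \((\modal{\wGL{n}})\) reach \(\nec^n\Sigma \Rightarrow \nec^n\phi\) in context, and a cut on \(\nec^n\phi\) against the original premise finishes. The only difference is that the paper carries the needed side formulas through the weakening part of each modal rule instance rather than invoking admissible \(\wk\). One bookkeeping point: the paper's \(\cut\) is context-sharing, so your left cut premise must be weakened to \(\necd^n\Sigma \Rightarrow \phi, \nec^n\phi\) rather than just \(\necd^n\Sigma \Rightarrow \nec^n\phi\); the same admissible weakening provides this.
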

\begin{proof}
  Let \(\pi \vdash \necd^n \Sigma, \nec^n \phi \Rightarrow \phi\) in \(\g{\wGL{n}} + \cut\).
  Then we have the following proof
  \[
    \AxiomC{\(\pi\)}
    \noLine
    \UnaryInfC{\(\necd^n \Sigma, \nec^n \phi \Rightarrow \phi\)}
    \RightLabel{\(\modal{\wGL{n}} (\nec \Sigma \Rightarrow \nec \phi)\)}
    \UnaryInfC{\(\necd^n\nec \Sigma, \nec^{n+1}\phi \Rightarrow \nec\phi\)}
    \RightLabel{\(\modal{\wGL{n}}(\nec^{2} \Sigma \Rightarrow \nec^{2} \phi)\)}
    \UnaryInfC{\(\raisebox{0.5em}{\vdots}\)}
    \RightLabel{\(\modal{\wGL{n}}(\nec^{n-3} \Sigma \Rightarrow \nec^{n-3} \phi)\)}
    \UnaryInfC{\(\necd^n\nec^{n-2} \Sigma, \nec^{2n-2}\phi \Rightarrow \nec^{n-2}\phi\)}
    \RightLabel{\(\modal{\wGL{n}}(\nec^{n-1} \Sigma \Rightarrow \nec^{n-1} \phi)\)}
    \UnaryInfC{\(\necd^n\nec^{n-1} \Sigma, \nec^{2n-1}\phi \Rightarrow \nec^{n-1}\phi\)}
    \RightLabel{\(\modal{\wGL{n}} (\nec^n \Sigma \Rightarrow \nec^n \phi)\)}
    \UnaryInfC{\(\necd^n \Sigma \Rightarrow \phi, \nec^{n}\phi\)}
    \AxiomC{\( \pi\)}
    \noLine
    \UnaryInfC{\(\necd^n \Sigma, \nec^n \phi \Rightarrow \phi\)}
    \RightLabel{\(\cut\)}
    \BinaryInfC{\(\necd^n \Sigma \Rightarrow \phi\)}
    \DisplayProof
  \]
  where to the right of \((\modal{\wGL{n}})\) we have annotated the active part of the sequent in the rule instance.
\end{proof}

\begin{theorem}\label{from-gwGL-to-nwGL}
  \(\g{\wGL{n}} + \cut \vdash S\) implies \(\n{\wGL{n}} + \cut \vdash S\)
\end{theorem}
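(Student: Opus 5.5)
We prove the statement by induction on the height of the given proof in \(\g{\wGL{n}} + \cut\), building a (wellfounded, hence trivially non-wellfounded) proof in \(\n{\wGL{n}} + \cut\) of the same sequent. Every wellfounded preproof is a proof in the local progress sense, since it has no infinite branches. The rules \((\ax)\), \((\botL)\), \((\botR)\), \((\toL)\), \((\toR)\) and \((\cut)\) are common to both calculi. For these we simply apply the same rule to the translated subproofs supplied by the induction hypothesis. The only real case is a final application of \((\modal{\wGL{n}})\).

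Suppose the last rule is
\[
  \AxiomC{\(\necd^n \Sigma, \nec^n \phi \Rightarrow \phi\)}
  \RightLabel{\(\modal{\wGL{n}}\)}
  \UnaryInfC{\(\nec \Sigma, \Gamma \Rightarrow \nec \phi, \Delta\)}
  \DisplayProof
\]
The plan is to first apply L\"ob's rule, which was just shown admissible in \(\g{\wGL{n}} + \cut\). This turns the proof of the premise into a proof \(\pi'\) of \(\necd^n \Sigma \Rightarrow \phi\) in \(\g{\wGL{n}} + \cut\). Then \(\pi'\) is translated into \(\n{\wGL{n}} + \cut\), and one instance of \((\modal{\wKT{n}})\) with weakening part \(\Gamma,\Delta\) yields \(\nec \Sigma, \Gamma \Rightarrow \nec \phi, \Delta\).

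The main obstacle is that \(\pi'\) is not a subproof of the original proof. It is built from two copies of the original premise proof together with \(n\) extra modal steps and a cut, so its height may exceed the original, and a naive induction on height does not apply. The fix I would use is to avoid translating \(\pi'\) wholesale. Instead, I first apply the induction hypothesis to the premise proof \(\pi\), which is a genuine subproof, obtaining \(\pi^\ast \vdash \necd^n \Sigma, \nec^n \phi \Rightarrow \phi\) in \(\n{\wGL{n}} + \cut\). I then replay the construction from the proof of L\"ob's rule inside \(\n{\wGL{n}} + \cut\).

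For this replay, each \((\modal{\wGL{n}})\) step in that derivation has conclusion \(\necd^n\nec^{k}\Sigma, \nec^{n+k}\phi \Rightarrow \nec^k \phi\) (and finally \(\necd^n \Sigma \Rightarrow \phi, \nec^n\phi\)). Each such conclusion is obtained from the previous sequent by \((\modal{\wKT{n}})\), whose premise \(\necd^n \nec^{k-1}\Sigma \Rightarrow \nec^{k-1}\phi\) is just that previous sequent with \(\nec^{n+k-1}\phi\) removed. So I would instead carry the induction on a strengthened statement, or use weakening to adjust the sequents. Concretely, I would prove in \(\n{\wGL{n}} + \cut\) the sequent \(\necd^n \Sigma \Rightarrow \phi, \nec^n \phi\) by a chain of \(n\) \((\modal{\wKT{n}})\) steps starting from \(\pi^\ast\) with the diagonal formula weakened appropriately, and then cut against \(\pi^\ast\) on \(\nec^n \phi\).

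If the \(\nec^{n+k}\phi\) bookkeeping does not match the premise shape of \((\modal{\wKT{n}})\), the fallback is to use cut with the identity sequents \((\Ax)\) available in \(\n{\wGL{n}}\), together with admissibility of \(\wk\) and \(\ctr\), to massage contexts. Since these admissible rules only transform already-translated proofs, the induction remains on the height of the original \(\g{\wGL{n}} + \cut\) proof and is well-founded.
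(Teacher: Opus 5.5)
There is a genuine gap, and it is exactly at the point you flag. In $\n{\wGL{n}} + \cut$ the rule $(\modal{\wKT{n}})$ has no diagonal formula in its premise. So the first step of your replayed chain, with active part $\nec\Sigma \Rightarrow \nec\phi$, needs a proof of $\necd^n\Sigma \Rightarrow \phi$. That is precisely the conclusion of L\"ob's rule you are trying to obtain, and the later steps inherit the same mismatch. Passing from $\pi^\ast \vdash \necd^n\Sigma, \nec^n\phi \Rightarrow \phi$ to a proof without $\nec^n\phi$ on the left is the converse of weakening, not weakening. It is not admissible, so ``weakening the diagonal formula appropriately'' is not available.

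Your fallback cannot repair this, because no construction that produces wellfounded proofs can work. Every rule of $\n{\wGL{n}} + \cut$, including $(\modal{\wKT{n}})$, is sound for $\K \oplus \set{\nec\psi \to \nec^{n+1}\psi \mid \psi \in \wff}$, and so are $(\Ax)$, $\wk$ and $\ctr$. Hence finite proofs only derive sequents valid in that logic. A single reflexive point validates $\nec\psi \to \nec^{n+1}\psi$ but refutes $\nec(\nec^n p \to p) \to \nec p$ (take $p$ false there). Yet $\g{\wGL{n}} \vdash {\Rightarrow \nec(\nec^n p \to p) \to \nec p}$.

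The missing idea is that the target proofs must be genuinely non-wellfounded. The paper defines the translation $\alpha$ by corecursion. In the modal case it emits an instance of $(\modal{\wKT{n}})$ whose premise is proved by $\alpha(\lob(\pi_0))$. Translating the larger proof $\lob(\pi_0)$ is harmless because that corecursive call is guarded by the $(\modal{\wKT{n}})$ node. The progress condition holds because in all other cases the height of the input strictly drops, so every infinite branch crosses infinitely many progressing premises.

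Alternatively, you can keep your induction on height, as in Lemma~\ref{effective-elim-first-step}. In the modal case, apply $(\toR)$ and $(\modal{\wKT{n}})$ to the translated premise to get $\nec\Sigma, \Gamma \Rightarrow \nec(\nec^n\phi \to \phi), \nec\phi, \Delta$. Then cut against the cyclic, non-wellfounded proof of L\"ob's axiom from Lemma~\ref{lob-axiom-cyclic-proof}.
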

\begin{proof}
  We define a function \(\alpha\) from proofs in \(\g{\wGL{n}} + \cut\) to preproofs in \(\n{\wGL{n}} + \cut\) of the same sequent via corecursion.
  Once the function is defined, we will argue that \(\alpha(\pi)\) is always a proof and not only a preproof.
  Let \(\pi\) be a proof in \(\g{\wGL{n}} + \cut\) and let us defined \(\alpha(\pi)\).
  We proceed by cases in the last rule \(R\) of \(\pi\).

  Assume \(R\) is one of \((\ax)\), \((\botL)\), \((\botR)\), \((\toL)\), \((\toR)\), \((\cut)\).
  Then \(\alpha\) is defined as
  \[
    \AxiomC{\(\pi_0\)}
    \noLine
    \UnaryInfC{\(S_0\)}
    \AxiomC{\(\cdots\)}
    \AxiomC{\(\pi_{n-1}\)}
    \noLine
    \UnaryInfC{\(S_{n-1}\)}
    \RightLabel{\(R\)}
    \TrinaryInfC{\(S\)}
    \DisplayProof
    \quad
    \overset{\alpha}{\longmapsto}
    \quad
    \AxiomC{\(\alpha(\pi_0)\)}
    \noLine
    \UnaryInfC{\(S_0\)}
    \AxiomC{\(\cdots\)}
    \AxiomC{\(\alpha(\pi_{n-1})\)}
    \noLine
    \UnaryInfC{\(S_{n-1}\)}
    \RightLabel{\(R\)}
    \TrinaryInfC{\(S\)}
    \DisplayProof
  \]

  Finally, let \(R = (\modal{\wGL{n}})\), then we define \(\alpha\) as follows
  \[
    \AxiomC{\(\pi_0\)}
    \noLine
    \UnaryInfC{\(\necd^n \Sigma, \nec^n \phi \Rightarrow \phi\)}
    \RightLabel{\(\modal{\wGL{n}}\)}
    \UnaryInfC{\(\nec \Sigma, \Gamma \Rightarrow \nec \phi, \Delta\)}
    \DisplayProof
    \quad
    \longmapsto
    \quad
    \AxiomC{\(\alpha(\lob(\pi_0))\)}
    \noLine
    \UnaryInfC{\(\necd^n \Sigma  \Rightarrow \phi\)}
    \RightLabel{\(\modal{\wKT{n}}\)}
    \UnaryInfC{\(\nec \Sigma, \Gamma \Rightarrow \nec \phi, \Delta\)}
    \DisplayProof
  \]

  We check that \(\alpha(\pi)\) is indeed a proof, for that purpose let us assign the measure \(\lhg(\pi)\) to the proofs in \(\g{\wGL{n}} + \cut\).
  We notice that if \(R \neq (\modal{\wGL{n}})\) the the corecursive call has a smaller measure than the original input.
  On the other hand, if \(R = (\modal{\wGL{n}})\) then there is an application of \((\modal{\wKT{n}})\) from the root to the corecursive call in \(\alpha(\pi)\).
  Then any infinite branch of \(\alpha(\pi)\) must have infinitely many applications of \((\modal{\wKT{n}})\), as desired.
\end{proof}

Now we will show how to translate proofs from \(\n{\wGL{n}}\) to \(\g{\wGL{n}}\).
A priori, \(\n{\wGL{n}}\) does not have a nice subformula property, which could make the translation from \(\n{\wGL{n}}\) to \(\g{\wGL{n}}\) harder to establish.
However, we will prove that if we use a refined notion of subformula and we distinguish between formulas occurring at the left and at the right, then the calculus behaves well enough.

When working with pairs of sets \((\Gamma, \Delta)\), we will use the following notations:
\begin{multicols}{2}
  \begin{enumerate}
    \item \(x \in_L (\Gamma, \Delta)\) iff \(x \in \Gamma\),
    \item \(x \in_R (\Gamma, \Delta)\) iff \(x \in \Delta\),
    \item \((\Gamma, \Delta) \subseteq (\Gamma', \Delta')\) if \(\Gamma \subseteq \Gamma'\) and \(\Delta \subseteq \Delta'\).
    \item \((\Gamma, \Delta) \union (\Gamma', \Delta') = (\Gamma \union \Gamma', \Delta \union \Delta')\).
  \end{enumerate}
\end{multicols}
In the following definition we want to understand \(\psi \in_L \sub^L_{\n{\wGL{n}}}(\phi)\) as \(\psi\) can occur at the left side of a sequent if \(\phi\) occurs at the left and \(\psi \in_R \sub^L_{\n{\wGL{n}}}(\phi)\) as \(\psi\) can occur at the right side of a sequent if \(\phi\) occurs at the left. \(\sub^R_{\n{\wGL{n}}}(\phi)\) is defined similarly but with \(\phi\) occurring at the right side.

\begin{definition}
  Let \(\phi\) be a formula, we define the sets \(\sub^L(\phi)\) and \(\sub^R(\phi)\) as follows.
  \[
    \sub^L_{\n{\wGL{n}}}(p) = (\set{p}, \varnothing);\qquad\qquad\qquad
    \sub^R_{\n{\wGL{n}}}(p) = (\varnothing, \set{p});
  \]
  \[
    \sub^L_{\n{\wGL{n}}}(\bot) = (\set{\bot}, \varnothing); \qquad\qquad\qquad
    \sub^R_{\n{\wGL{n}}}(\bot) = (\varnothing, \set{\bot});
  \]
  \begin{multline*}
    \sub^L_{\n{\wGL{n}}}(\phi \to \psi) = (\set{\phi \to \psi} \union \Gamma_\phi \union \Gamma_\psi, \Delta_\phi \union \Delta_\psi),\\
\text{where \(\sub^R_{\n{\wGL{n}}}(\phi) = (\Gamma_\phi, \Delta_\phi)\) and \(\sub^L_{\n{\wGL{n}}}(\psi) = (\Gamma_\psi, \Delta_\psi)\);}
\end{multline*}
  \begin{multline*}
    \sub^R_{\n{\wGL{n}}}(\phi \to \psi) = ( \Gamma_\phi \union \Gamma_\psi, \set{\phi \to \psi} \union \Delta_\phi \union \Delta_\psi),\\
  \text{where \(\sub^L_{\n{\wGL{n}}}(\phi) = (\Gamma_\phi, \Delta_\phi)\) and \(\sub^R_{\n{\wGL{n}}}(\psi) = (\Gamma_\psi, \Delta_\psi)\);}
 \end{multline*}
  \begin{equation*}
    \sub^L_{\n{\wGL{n}}}(\nec \phi) = (\set{\nec^i \phi \mid i \geq 1} \union \Gamma_\phi, \Delta_\phi),  
  \text{where \(\sub^L_{\n{\wGL{n}}}(\phi) = (\Gamma_\phi, \Delta_\phi)\);}
 \end{equation*}
  \begin{equation*}
    \sub^R_{\n{\wGL{n}}}(\nec \phi) = (\Gamma_\phi, \set{\nec \phi} \union \Delta_\phi),  
  \text{where \(\sub^R_{\n{\wGL{n}}}(\phi) = (\Gamma_\phi, \Delta_\phi)\).}
\end{equation*}
  For a sequent \(S = \Gamma \Rightarrow \Delta\), we define \(\sub_{\n{\wGL{n}}}(S) = \Union_{\phi \in \Gamma} \sub^L(\phi) \union \Union_{\psi \in \Delta} \sub^R(\psi)\).
   If \(\sub_{\n{\wGL{n}}}(S) = (\Gamma, \Delta)\), we will write \(\sub^L_{\n{\wGL{n}}}(S)\) to denote \(\Gamma\), i.e., the formulas that can occur at the left side of the sequents; and \(\sub^R_{\n{\wGL{n}}}(S)\) to denote \(\Delta\), i.e., the formuals that can occur at the right side of the sequents.
\end{definition}

The following are trivial observations from the definition.
\begin{enumerate}
  \item For any formula \(\phi\) we have that \(\phi \in_L \sub^L_{\n{\wGL{n}}}(\phi)\) and \(\phi \in_R \sub^R_{\n{\wGL{n}}}(\phi)\).
  \item If \(\sub^L_{\n{\wGL{n}}}(\phi) = (\Gamma_\phi, \Delta_\phi)\) then \(\Delta_\phi\) is finite, similarly if \(\sub^R_{\n{\wGL{n}}}(\phi) = (\Gamma_\phi, \Delta_\phi)\) then \(\Delta_\phi\) is finite.
    As a corollary, \(\sub^R_{\n{\wGL{n}}}(S)\) is finite for any sequent \(S\).
\end{enumerate}

Then, we can show that the subformula property holds with respect to \(\sub_{\n{\wGL{n}}}\).

\begin{lemma}[Local subformula property]
  \label{local-subformula-property-nwGL}
  Let \(R\) be a rule of \(\n{\wGL{n}}\) and \((S_0,\ldots, S_{m-1}, S) \in R\).
  Then \(\sub_{\n{\wGL{n}}}(S_i) \subseteq \sub_{\n{\wGL{n}}}(S)\) for \(i < m\).
\end{lemma}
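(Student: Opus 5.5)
Since $\sub_{\n{\wGL{n}}}$ of a sequent is the componentwise union of $\sub^L$ over the left formulas and $\sub^R$ over the right formulas, it is monotone in the sequent. Any formula of a premise that also occurs, on the same side, in the conclusion (the weakening or context parts) therefore contributes nothing new. So I go through the rules of $\n{\wGL{n}}$ one at a time and check only the formulas that are new in each premise. The zero-premise rules $(\ax)$ and $(\botL)$ are vacuous.

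\emph{Propositional rules.} For $(\botR)$ the premise $\Gamma \Rightarrow \Delta$ is a subsequent of the conclusion, so monotonicity suffices. For $(\toR)$ the premise adds $\phi$ on the left and $\psi$ on the right. By definition $\sub^R(\phi\to\psi)$ contains $\sub^L(\phi)$ and $\sub^R(\psi)$ componentwise, and it contains $\phi \in_L \sub^L(\phi)$ and $\psi \in_R \sub^R(\psi)$ by the first trivial observation. For $(\toL)$ the premises add $\phi$ on the right and $\psi$ on the left. Both $\sub^R(\phi)$ and $\sub^L(\psi)$ are included componentwise in $\sub^L(\phi\to\psi)$, which closes this case.

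\emph{Modal rule.} This is the only case that needs thought, because of the $\necd^n\Sigma$ on the left of the premise of $(\modal{\wKT{n}})$. The premise $\necd^n\Sigma \Rightarrow \phi$ contains $\phi$ on the right, and $\nec^n\sigma$ and $\sigma$ on the left for each $\sigma \in \Sigma$.
\begin{enumerate}
  \item For $\phi$: $\sub^R(\nec\phi) = (\Gamma_\phi, \set{\nec\phi}\union\Delta_\phi)$ where $\sub^R(\phi) = (\Gamma_\phi,\Delta_\phi)$, so $\sub^R(\phi) \subseteq \sub^R(\nec\phi)$.
  \item For $\sigma$: $\sub^L(\nec\sigma)$ contains $\sub^L(\sigma)$ componentwise by definition.
  \item For $\nec^n\sigma$ (this is why the $L$-closure contains all iterates $\nec^i\sigma$, $i\ge 1$): I first prove the auxiliary claim $\sub^L(\nec^i\sigma) \subseteq \sub^L(\nec\sigma)$ for every $i \geq 1$. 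Unfolding the definition $i$ times gives
\[
  \sub^L(\nec^i\sigma) = \Big(\Union_{1\le j\le i}\set{\nec^k\nec^{j-1}\sigma \mid k\ge1} \union \Gamma_\sigma,\ \Delta_\sigma\Big),
\]
where $\sub^L(\sigma) = (\Gamma_\sigma,\Delta_\sigma)$. Each $\nec^k\nec^{j-1}\sigma$ equals $\nec^{k+j-1}\sigma$ with $k+j-1 \geq 1$, so it already lies in $\set{\nec^m\sigma \mid m\ge1}$. Hence the left component collapses to $\set{\nec^m\sigma\mid m \geq 1}\union\Gamma_\sigma$, which is exactly $\sub^L(\nec\sigma)$. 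The claim can be done cleanly by induction on $i$. Applying it with $i = n \geq 1$ gives the case.
\end{enumerate}
The remaining formulas of the conclusion (those in $\Gamma$ and $\Delta$, and $\nec\Sigma$ itself) only enlarge the right-hand side, so the inclusion holds.

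The only step that goes beyond unfolding definitions is the auxiliary claim about iterated boxes in $\sub^L$. I expect it to be the main (though still mild) obstacle, since it is exactly where the infinite set $\set{\nec^i\phi\mid i\ge1}$ in the definition of $\sub^L$ is used. Everything else is routine bookkeeping by monotonicity.
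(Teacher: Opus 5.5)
Your proof is correct and takes the same approach as the paper, whose proof is only ``by inspecting the shape of the rules of \(\n{\wGL{n}}\).'' You fill in that inspection rule by rule, and you correctly isolate the one non-trivial point: the inclusion \(\sub^L_{\n{\wGL{n}}}(\nec^n\sigma) \subseteq \sub^L_{\n{\wGL{n}}}(\nec\sigma)\) (in fact an equality), which is exactly what the clause \(\set{\nec^i\phi \mid i \geq 1}\) in the definition of \(\sub^L\) guarantees.
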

\begin{proof}
  By inspecting the shape of the rules of \(\n{\wGL{n}}\).
\end{proof}

For the translation, it is only necessary to focus on the formulas occurring at the right of the sequent.

\begin{theorem}\label{from-nwGL-to-gwGL}
  Let \((\Lambda_i)_{i < n}\) be a sequence of finite sets of formulas.
  For any sequent such that \(\n{\wGL{n}} \vdash \Gamma \Rightarrow \Delta\), we have that \(\g{\wGL{n}} \vdash \set{\nec^{i+1} \Lambda_i}_{i < n}, \Gamma \Rightarrow \Delta\).
\end{theorem}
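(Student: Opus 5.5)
The plan is to prove a strengthened statement by a well-founded induction in which the sets \(\Lambda_i\) record a history of the right-hand boxed formulas met along a branch. Fix a proof \(\pi \vdash \Gamma \Rightarrow \Delta\) in \(\n{\wGL{n}}\). By Lemma~\ref{local-subformula-property-nwGL}, every sequent in \(\pi\) has \(\sub^R_{\n{\wGL{n}}}\) contained in \(F := \sub^R_{\n{\wGL{n}}}(\Gamma \Rightarrow \Delta)\), and \(F\) is finite. I will prove the statement for all subproofs \(\pi'\) of \(\pi\) and all sequences \((\Lambda_i)_{i<n}\) with every \(\Lambda_i \subseteq F\). The induction is on the lexicographic pair \((N - \sum_{i<n}|\Lambda_i|,\ \lhg(\pi'))\), where \(N = n\cdot|F|\). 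The original theorem, with arbitrary \(\Lambda_i\), then follows by proving the instance with \(\Lambda_i \cap F\) in place of \(\Lambda_i\) and applying \(\wk\) for the remaining formulas, since \(\wk\) is admissible in \(\g{\wGL{n}}\).

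For the propositional rules and the axioms, the context \(\set{\nec^{i+1}\Lambda_i}_{i<n}\) is simply carried along as part of the side formulas. Each premise is a subproof with strictly smaller local height and the same \(\Lambda\)'s, so the induction hypothesis gives \(\g{\wGL{n}}\)-proofs of the premises with the extra context, and the same rule is reapplied. The only case in which the local height does not decrease is the last rule being \((\modal{\wKT{n}})\), with premise \(\necd^n \Sigma \Rightarrow \phi\) and conclusion \(\nec\Sigma, \Gamma' \Rightarrow \nec\phi, \Delta'\). Here \(\nec\phi \in F\) on the right, so \(\phi\) is in the relevant bookkeeping set.

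The key step is the treatment of \((\modal{\wKT{n}})\), split into two cases.
\begin{itemize}
\item \emph{Case \(\phi \in \Lambda_0\).} Then \(\nec\phi\) occurs on the left of the target sequent and \(\nec\phi\) on the right, so the target is an instance of \((\Ax)\).
\item \emph{Case \(\phi \notin \Lambda_0\).} Define \(\Lambda'_i := \Lambda_{i+1}\) for \(i < n-1\) and \(\Lambda'_{n-1} := \Lambda_0 \union \set{\phi}\). Then \(\sum_i |\Lambda'_i| = \sum_i |\Lambda_i| + 1\), each \(\Lambda'_i \subseteq F\), and the sum is bounded by \(N\), so the first component of the measure strictly decreases. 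The induction hypothesis applied to the premise subproof yields \(\g{\wGL{n}} \vdash \set{\nec^{i+1}\Lambda'_i}_{i<n}, \necd^n\Sigma \Rightarrow \phi\). Unfolding the definition, this sequent is \(\set{\nec^{i}\Lambda_{i}}_{1\le i<n}, \nec^n\Lambda_0, \nec^n\phi, \necd^n\Sigma \Rightarrow \phi\). I now apply \((\modal{\wGL{n}})\) with auxiliary multiset \(\Sigma \union \set{\nec^i\Lambda_i}_{1\le i<n} \union \Lambda_0\) and diagonal formula \(\nec^n\phi\). Its premise is a weakening of the sequent just obtained, using admissibility of \(\wk\): the extra formulas are \(\Lambda_0\), \(\nec^{n+i}\Lambda_i\) and \(\nec^i\Lambda_i\). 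Its conclusion contains \(\nec\Sigma, \set{\nec^{i+1}\Lambda_i}_{i<n}\) on the left and \(\nec\phi\) on the right, which is exactly the target after weakening in \(\Gamma', \Delta'\).
\end{itemize}

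I expect the main obstacle to be this index bookkeeping, namely checking that the rotation \(\Lambda \mapsto \Lambda'\) exactly matches the shape \(\necd^n\) produced by the \(\g{\wGL{n}}\) modal rule. Concretely, three points must line up:
\begin{itemize}
\item \(\nec^{i+1}\Lambda_i\) with \(i \ge 1\) must contribute \(\nec^i\Lambda_i\) at index \(i-1\);
\item \(\nec\Lambda_0\) must contribute \(\nec^n\Lambda_0\) at index \(n-1\);
\item the diagonal formula \(\nec^n\phi\) must land at index \(n-1\) as well.
\end{itemize}
A second point is well-foundedness. Propositional steps are handled by the finiteness of the main local fragment, and modal steps by the bound \(N\). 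Together these give a genuine well-founded induction, which is what turns the non-wellfounded proof into a finite \(\g{\wGL{n}}\)-proof.
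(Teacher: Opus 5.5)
Your proposal is correct and is essentially the paper's proof. The key steps coincide: the \((\Ax)\) case when \(\phi \in \Lambda_0\); otherwise the rotation to \((\Lambda_1,\ldots,\Lambda_{n-1},\Lambda_0 \union \set{\phi})\), followed by weakening and one application of \((\modal{\wGL{n}})\) with \(\nec^n\phi\) as diagonal formula. Your lexicographic measure over a globally fixed \(F\) just repackages the paper's \(\omega\bigl(\sum_{i<n}|\sub^R_{\n{\wGL{n}}}(\Gamma\Rightarrow\Delta)\setminus\Lambda_i|\bigr)+\lhg(\pi)\); because the paper measures against the current sequent, it does not need your preliminary reduction to \(\Lambda_i \subseteq F\). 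One small slip: the formulas \(\nec^i\Lambda_i\) for \(1\le i<n\) are already in the induction-hypothesis sequent, so only \(\Lambda_0\) and \(\nec^{n+i}\Lambda_i\) should be added by weakening.
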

\begin{proof}
  We use induction on the measure
  \[
    \omega\left(\sum_{i < n} |\sub^R_{\n{\wGL{n}}}(\Gamma \Rightarrow \Delta) \setminus \Lambda_i|\right) +  \lhg(\pi),
  \]
  where, for this measure to work, we need that \(\sub^R_{\n{\wGL{n}}}(\Gamma \Rightarrow \Delta)\) is always finite.
  Proceed by cases on the last rule \(R\) applied to \(\pi\).

  If \(R\) is one of \((\ax)\), \((\botL)\), \((\botR)\), \((\toL)\) or \((\toR)\) then \(\pi\) has the shape
    \[
      \AxiomC{\(\pi_0\)}
      \noLine
      \UnaryInfC{\(\Gamma_0 \Rightarrow \Delta_0\)}
      \AxiomC{\(\cdots\)}
      \AxiomC{\(\pi_{n-1}\)}
      \noLine
      \UnaryInfC{\(\Gamma_{n-1} \Rightarrow \Delta_{n-1}\)}
      \RightLabel{\(R\)}
      \TrinaryInfC{\(\Gamma \Rightarrow \Delta\)}
      \DisplayProof
    \]
    By the induction hypothesis applied to \(\pi_i\) and \((\Lambda_i)_{i < n}\) (the \(\omega\)-coefficient of the measure does not increase by the local subformula property and the other coefficient strictly decreases), we obtain  \(\tau_i \vdash \set{\nec^{i+1}\Lambda_i}_{i < n},\Gamma_i \Rightarrow \Delta_i\) for \(i < n\) proofs in \(\g{\wGL{n}}\).
    Then the desired proof is
    \[
      \AxiomC{\(\tau_0\)}
      \noLine
      \UnaryInfC{\(\set{\nec^{i+1}\Lambda_i}_{i < n},\Gamma_0 \Rightarrow \Delta_0\)}
      \AxiomC{\(\ldots\)}
      \AxiomC{\(\tau_{n-1}\)}
      \noLine
      \UnaryInfC{\(\set{\nec^{i+1}\Lambda_i}_{i < n},\Gamma_{n-1} \Rightarrow \Delta_{n-1}\)}
      \RightLabel{\(R\)}
      \TrinaryInfC{\(\set{\nec^{i+1}\Lambda_i}_{i < n},\Gamma \Rightarrow \Delta\)}
      \DisplayProof
    \]
  Now assume \(R = (\modal{\wKT{n}})\). So \(\pi\) has the following shape
  \[
    \AxiomC{\(\pi_0\)}
    \noLine
    \UnaryInfC{\(\necd^n \Sigma \Rightarrow \phi\)}
    \RightLabel{\(\modal{\wKT{n}}\)}
    \UnaryInfC{\(\nec \Sigma, \Gamma \Rightarrow \nec \phi, \Delta\)}
    \DisplayProof
  \]
  Let us denote the conclusion of \(\pi\) as \(S\) and the conclusion of \(\pi_0\) as \(S_0\).
  If \(\phi \in \Lambda_0\), then the desired proof in \(\g{\wGL{n}}\) is simply
  \[
    \AxiomC{}
    \RightLabel{\(\Ax\)}
    \UnaryInfC{\(\set{\nec^{i+1} \Lambda_i}_{i < n}, \nec \Sigma, \Gamma \Rightarrow \nec \phi, \Delta\)}
    \DisplayProof
  \]
  If \(\phi \not \in \Lambda_0\), since \(\phi \in \sub^R_{\n{\wGL{n}}}(S_0)\) we have that 
  \[
    |\sub^R_{\n{\wGL{n}}}(S_0) \setminus (\Lambda_0 \union \set{\phi})| < |\sub^R_{\n{\wGL{n}}}(S_0) \setminus \Lambda_0| \leq |\sub^R_{\n{\wGL{n}}}(S) \setminus \Lambda_0|
  \]
  where we used also that \(\sub^R_{\n{\wGL{n}}}(S_0) \subseteq \sub^R_{\n{\wGL{n}}}(S)\).
  By this inclusion, we also have that
  \[
    |\sub^R_{\n{\wGL{n}}}(S_0) \setminus \Lambda_i| \leq |\sub^R_{\n{\wGL{n}}}(S) \setminus \Lambda_i|
  \]
  for \(1 \leq i < n\).
  So we can apply the induction hypothesis to \(\pi_0\) and \((\Lambda_1,\ldots,\Lambda_{n-1}, \Lambda_0 \union \set{\phi})\) obtaining a proof  \(\tau_0 \vdash \nec \Lambda_1, \ldots, \nec^{n-1} \Lambda_{n-1}, \nec^n \Lambda_0, \nec^n \phi, \necd^n \Sigma \Rightarrow \phi\) in \(\g{\wGL{n}}\).
  The desired proof is then
  \[
    \AxiomC{\(\tau_0\)}
    \noLine
    \UnaryInfC{\(\nec \Lambda_1, \ldots, \nec^{n-1} \Lambda_{n-1}, \nec^n \Lambda_0, \nec^n \phi, \necd^n \Sigma \Rightarrow \phi\)}
    \RightLabel{\(\wk\)}
    \UnaryInfC{\(\necd^n \nec \Lambda_1, \ldots, \necd^n\nec^{n-1} \Lambda_{n-1}, \necd^n \Lambda_0, \nec^n \phi, \necd^n \Sigma \Rightarrow \phi\)}
    \RightLabel{\(\modal{\wGL{n}}\)}
    \UnaryInfC{\(\set{\nec^{i+1} \Lambda_i}_{i < n}, \nec \Sigma, \Gamma \Rightarrow \nec \phi, \Delta\)}
    \DisplayProof
  \]
  where we used that \(\wk\) is admissible in \(\g{\wGL{n}}\).
\end{proof}

\subsection{Cut elimination}
\label{subsec:cut-elimination}

We show cut elimination in \(\n{\wGL{n}}\) by showing local cut admissibility.
Then, from the translations of the previous subsection, we will automatically obtain cut elimination for \(\g{\wGL{n}}\).

\begin{theorem}\label{cut-elimination-wGL}
  \(\cut\) is eliminable in \(\n{\wGL{n}}\).
  As a corollary, \(\cut\) is eliminable in \(\g{\wGL{n}}\).
\end{theorem}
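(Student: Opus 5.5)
By Theorem~\ref{local-adm-and-eliminability}, it suffices to show that \(\cut\) is locally admissible in \(\n{\wGL{n}}\). Concretely, given locally \(\cut\)-free proofs \(\pi \vdash \Gamma \Rightarrow \Delta, \chi\) and \(\tau \vdash \chi, \Gamma \Rightarrow \Delta\) in \(\n{\wGL{n}} + \cut\), I will construct a locally \(\cut\)-free proof of \(\Gamma \Rightarrow \Delta\). The argument is a main induction on \(|\chi|\) with a subinduction on \(\lhg(\pi) + \lhg(\tau)\). In every case the new main local fragment is built from the old local fragments together with the admissible structural rules. Weakening, contraction and the inversions preserve local height and local rules, so they never reintroduce \(\cut\) locally. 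Any cuts that get pushed above a progressing node are harmless, because they sit outside the main local fragment.

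The case analysis follows the standard pattern.
\begin{itemize}
\item If the last rule of either proof is \((\ax)\) or \((\botL)\) with \(\chi\) not principal, the conclusion is an axiom. If \(\chi\) is principal in an axiom, the other proof, after contraction or weakening, already gives \(\Gamma\Rightarrow\Delta\).
\item If \(\chi\) is not principal in the last rule of \(\pi\) (respectively \(\tau\)), and that rule is propositional, I permute the cut upwards and use the subinduction hypothesis. This needs weakening to restore contexts.
\item If \(\chi\) is not principal and the last rule is \((\modal{\wKT{n}})\), then \(\chi\) lies in the weakening part. The conclusion \(\Gamma\Rightarrow\Delta\) then follows from the same premise by the same modal rule, with no cut at all.
\item If \(\chi = \chi_0 \to \chi_1\) is principal on both sides, I use invertibility of \(\toR\) and \(\toL\) and two cuts on the smaller formulas \(\chi_0\) and \(\chi_1\), by the main induction hypothesis. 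The case \(\bot\) is handled the same way.
\end{itemize}

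The interesting case is \(\chi = \nec\phi\) principal in \((\modal{\wKT{n}})\) on both sides. Then \(\pi\) ends with \(\nec\Sigma,\Gamma'\Rightarrow\nec\phi,\Delta'\) from \(\pi_0 \vdash \necd^n\Sigma\Rightarrow\phi\). The proof \(\tau\) ends with \(\nec\phi,\nec\Pi,\Gamma''\Rightarrow\nec\psi,\Delta''\) from \(\tau_0 \vdash \necd^n\phi,\necd^n\Pi\Rightarrow\psi\), where \(\necd^n\phi\) means \(\nec^n\phi,\phi\). I want to apply \((\modal{\wKT{n}})\) to a sequent \(\necd^n\Sigma,\necd^n\Pi\Rightarrow\psi\). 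This premise lies above a progressing node, so I may use \(\cut\) there freely, and the result is still locally \(\cut\)-free. To remove \(\phi\), I cut with \(\wk(\pi_0)\). To remove \(\nec^n\phi\), I need \(\necd^n\Sigma\Rightarrow\nec^n\phi\). This is exactly where \(n\)-transitivity enters: from \(\pi_0\) I derive \(\nec^{k}\) of it by repeated applications of \((\modal{\wKT{n}})\). Since \(\necd^n\Sigma\) contains \(\nec^n\Sigma\), the sequent \(\nec^{n}\Sigma\Rightarrow\nec^n\phi\) is obtained by applying the modal rule \(n\) times. Each step needs \(\necd^n\nec^{k}\Sigma\) from \(\nec^{k+1}\Sigma\), which is reached through weakening and by keeping \(\nec^{n+k}\Sigma\) available. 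I expect this to be the main obstacle: the boxed formulas must line up correctly (\(\nec^{k+1}\Sigma\) must yield \(\nec^k\Sigma\) and \(\nec^{n+k}\Sigma\)). If that alignment fails, I would fall back on cutting \(\nec^n\phi\) via a derivation of \(\nec\Sigma\Rightarrow\nec^{n}\phi\)-type sequents built corecursively from \(\pi\) itself. In either variant, each such derived proof starts with a modal rule, so it has local height \(1\) and is locally \(\cut\)-free, and all the cuts live above progressing nodes.

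The corollary for \(\g{\wGL{n}}\) then follows from the diagram. Given a proof in \(\g{\wGL{n}}+\cut\), Theorem~\ref{from-gwGL-to-nwGL} gives a proof in \(\n{\wGL{n}}+\cut\). Eliminability gives a proof in \(\n{\wGL{n}}\). Finally, Theorem~\ref{from-nwGL-to-gwGL}, applied with all \(\Lambda_i=\varnothing\), gives a cut-free proof in \(\g{\wGL{n}}\).
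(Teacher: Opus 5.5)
Your proposal is correct and follows the paper's own proof. It establishes local admissibility of \(\cut\) by induction on \((|\chi|, \lhg(\pi)+\lhg(\tau))\), and it closes the key modal case by stacking \(n\) applications of \((\modal{\wKT{n}})\) on \(\pi_0\) to get \(\necd^n\Sigma \Rightarrow \nec^n\phi\), then placing the two cuts (on \(\phi\) and \(\nec^n\phi\)) above the progressing premise of a final \((\modal{\wKT{n}})\); the corollary then follows from Theorems~\ref{from-gwGL-to-nwGL} and~\ref{from-nwGL-to-gwGL}, as in the paper. Your alignment worry is unfounded and no fallback is needed: the weakening part of \((\modal{\wKT{n}})\) is arbitrary, so putting \(\nec^{n+k+1}\Sigma\) into it at the \(k\)-th step yields exactly \(\necd^n\nec^{k+1}\Sigma \Rightarrow \nec^{k+1}\phi\), which is the premise the next step requires.
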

\begin{proof}
  We will show local admissibility of \(\cut\) in \(\n{\wGL{n}}\), from which the desired result follows.
  Let \(\pi \vdash \Gamma \Rightarrow \Delta, \chi\) and \(\tau \vdash \chi, \Gamma \Rightarrow \Delta\) be locally cut-free proofs in \(\n{\wGL{n}}\).
  We proceed by induction on the lexicographic pair \((|\chi|, \lhg(\pi) + \lhg(\tau))\).
  We only show one case here. The other cases are standard and can be found in Appendix~\ref{sec:cut-reductions}.
  
  Assume \(\pi\) and \(\tau\) are of the following shape:
      \[
        \AxiomC{\(\pi_0\)}
        \noLine
        \UnaryInfC{\(\necd^n \Sigma \Rightarrow \chi_0\)}
        \RightLabel{\(\modal{\wKT{n}}\)}
        \UnaryInfC{\(\nec \Sigma, \Gamma' \Rightarrow \nec \phi, \Delta', \nec\chi_0\)}
        \DisplayProof
        \qquad
        \AxiomC{\(\tau_0\)}
        \noLine
        \UnaryInfC{\(\necd^n\chi_0, \necd^n \Sigma \Rightarrow \phi\)}
        \RightLabel{\(\modal{\wKT{n}}\)}
        \UnaryInfC{\(\nec\chi_0, \nec \Sigma, \Gamma' \Rightarrow \nec \phi, \Delta'\)}
        \DisplayProof
      \]
      where \(\Gamma = \nec \Sigma, \Gamma'\), \(\Delta = \nec \phi, \Delta'\) and \(\chi = \nec \chi_0\).
      The desired proof is
      \[
        \AxiomC{\(\wk(\pi_0)\)}
        \noLine
        \UnaryInfC{\(\necd^n \Sigma \Rightarrow \phi, \chi_0\)}
        \AxiomC{\(\pi_0\)}
        \noLine
        \UnaryInfC{\(\necd^n \Sigma \Rightarrow \chi_0\)}
        \RightLabel{\(\modal{\wKT{n}} (\nec \Sigma \Rightarrow \nec \chi_0)\)}
        \UnaryInfC{\(\necd^n\nec \Sigma \Rightarrow \nec\chi_0\)}
        \RightLabel{\(\modal{\wKT{n}}(\nec^{2} \Sigma \Rightarrow \nec^{2} \chi_0)\)}
        \UnaryInfC{\(\raisebox{0.5em}{\vdots}\)}
        \RightLabel{\(\modal{\wKT{n}}(\nec^{n-3} \Sigma \Rightarrow \nec^{n-3} \chi_0)\)}
        \UnaryInfC{\(\necd^n\nec^{n-2} \Sigma \Rightarrow \nec^{n-2}\chi_0\)}
        \RightLabel{\(\modal{\wKT{n}}(\nec^{n-1} \Sigma \Rightarrow \nec^{n-1} \chi_0)\)}
        \UnaryInfC{\(\necd^n\nec^{n-1} \Sigma \Rightarrow \nec^{n-1}\chi_0\)}
        \RightLabel{\(\modal{\wKT{n}} (\nec^n \Sigma \Rightarrow \nec^n \chi_0)\)}
        \UnaryInfC{\(\chi_0,\necd^n \Sigma \Rightarrow \phi,\nec^{n}\chi_0\)}
        \AxiomC{\(\tau_0\)}
        \noLine
        \UnaryInfC{\(\necd^n\chi_0, \necd^n \Sigma \Rightarrow \phi\)}
        \RightLabel{\(\cut\)}
        \BinaryInfC{\(\chi_0, \necd^n \Sigma \Rightarrow \phi\)}
        \RightLabel{\(\cut\)}
        \BinaryInfC{\(\necd^n \Sigma \Rightarrow \phi\)}
        \RightLabel{\(\modal{\wKT{n}}\)}
        \UnaryInfC{\(\nec \Sigma, \Gamma' \Rightarrow \nec\phi, \Delta'\)}
        \DisplayProof
      \]
      where to the right of \((\modal{\wKT{n}})\) we have annotated the active part of the sequent in the rule instance.
\end{proof}

The following corollary summarizes our results.

\begin{corollary}
  For any sequent \(S\), the following are equivalent.
  \begin{enumerate}
    \item \(\wGL{n} \vdash S^\flat\),
    \item \(\g{\wGL{n}} \; (+\cut) \vdash S\),
    \item \(\n{\wGL{n}} \; (+\cut) \vdash S\).
  \end{enumerate}
\end{corollary}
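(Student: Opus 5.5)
This corollary follows by chaining the results already established; the plan is to close a cycle of implications. The cycle runs through the two formal variants in (2) and (3), namely with and without \(\cut\).

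First, (1) \(\Leftrightarrow\) \(\g{\wGL{n}} + \cut \vdash S\). This is exactly the theorem at the end of Section~\ref{sec:wellfounded-calculus}. Lemma~\ref{from-wGL-to-gwGL} together with invertibility of \(\toR\) gives the forward direction. Induction on the height together with Lemma~\ref{soundness-gwGL} gives the backward direction.

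Second, I handle the translations. From \(\g{\wGL{n}} + \cut \vdash S\) we get \(\n{\wGL{n}} + \cut \vdash S\) by Theorem~\ref{from-gwGL-to-nwGL}. Theorem~\ref{cut-elimination-wGL} shows that \(\cut\) is eliminable in \(\n{\wGL{n}}\), so \(\n{\wGL{n}} \vdash S\). Next, Theorem~\ref{from-nwGL-to-gwGL} applied with all \(\Lambda_i = \varnothing\) yields \(\g{\wGL{n}} \vdash S\). Finally, \(\g{\wGL{n}} \vdash S\) trivially implies \(\g{\wGL{n}} + \cut \vdash S\), since every cut-free proof is a proof in the extended calculus. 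Likewise \(\n{\wGL{n}} \vdash S\) implies \(\n{\wGL{n}} + \cut \vdash S\): adding a rule with empty progress function keeps every proof a proof, because the progressing nodes are unchanged.

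Combining these gives the chain
\(\g{\wGL{n}}+\cut \Rightarrow \n{\wGL{n}}+\cut \Rightarrow \n{\wGL{n}} \Rightarrow \g{\wGL{n}} \Rightarrow \g{\wGL{n}}+\cut\).
So all four calculus variants are equivalent, and by the first step they are equivalent to (1).

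The only point needing care is the instantiation of Theorem~\ref{from-nwGL-to-gwGL}. With empty \(\Lambda_i\), the extra context \(\set{\nec^{i+1}\Lambda_i}_{i<n}\) is empty, so the conclusion is literally \(\Gamma \Rightarrow \Delta\). No real obstacle remains: everything substantive was done in the cited theorems.
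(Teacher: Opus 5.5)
Your proposal is correct and follows essentially the same route as the paper: the corollary is stated there as a summary, obtained by combining the equivalence theorem of Section~\ref{sec:wellfounded-calculus} with the cycle \(\g{\wGL{n}}+\cut \to \n{\wGL{n}}+\cut \to \n{\wGL{n}} \to \g{\wGL{n}} \to \g{\wGL{n}}+\cut\) pictured at the start of Section~\ref{sec:cut-elimination}. This uses Theorems~\ref{from-gwGL-to-nwGL}, \ref{cut-elimination-wGL} and \ref{from-nwGL-to-gwGL} (the last with all \(\Lambda_i = \varnothing\)), and your remarks on the trivial inclusions of the cut-free calculi into their \(\cut\)-extensions just spell out what the paper leaves implicit.
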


\subsection{Effectiveness of cut elimination}
\label{subsec:effectiveness-of-cut-elimination}

In this subsection we will discuss the effectiveness of the cut elimination for \(\g{\wGL{n}}\) that we provided at Subsections~\ref{subsec:translations} and \ref{subsec:cut-elimination}.
We will show that, instead of non-wellfounded proofs, the use of cyclic proofs suffices and thus all the steps can be effectively computed, thus answering a question from \cite{iwata}.

The cut elimination procedure for \(\g{\wGL{n}}\) presented in the previous section consists of  three steps:
\begin{enumerate}
  \item Translate the proof from \(\g{\wGL{n}} + \cut\) to \(\n{\wGL{n}} + \cut\), corecursively.
  \item Eliminate the cuts in \(\n{\wGL{n}} + \cut\) using corecursion on the local fragments.
  \item Translate the proof from \(\n{\wGL{n}}\) to \(\g{\wGL{n}}\) by corecursion on the local fragments.
\end{enumerate}
In this section, we will restructure this procedure as follows.
\begin{enumerate}
  \item Translate the proof from \(\g{\wGL{n}} + \cut\) to a cyclic proof in \(\n{\wGL{n}} + \cut\), recursively.
  \item Translate from cyclic proofs in \(\n{\wGL{n}} + \cut\) to \(\g{\wGL{n}}\) by induction.
    For this we will need an inductively defined function that from a cyclic proof in \(\n{\wGL{n}} + \cut\) produces a cyclic locally cut free proof in \(\n{\wGL{n}} + \cut\).
\end{enumerate}
In some sense, the second step of this procedure is obtained from merging the last two steps of the previously defined procedure.
This can be understood as follows: to obtain a proof in \(\g{\wGL{n}}\) from a proof in \(\n{\wGL{n}}\) it is not necessary to have the full proof in \(\n{\wGL{n}}\) but just a big enough fragment of it (as the non-wellfounded behaviour of the proofs in \(\n{\wGL{n}}\) is relatively simple).
For that reason, we never have to  calculate the full cut free proof of \(\n{\wGL{n}}\).

In order to simplify the proofs a bit, we will use the concept of locally backedge-free cyclic proof and unfolding.
For the rest of the subsection we make the following convention.
\begin{center}
  Any cyclic proof given in this subsection will be assumed to be locally backedge free.
\end{center}
We can assume this without loss of generality thanks to the unfolding technique (see the last paragraph of Subsection~\ref{subsec:local-progress-proof-theory}).

Before we start, let us introduce a restriction of the contraction rule to variables as follows:
\[
  \AxiomC{\(\Gamma, \Gamma' \Rightarrow \Delta, \Delta'\)}
  \RightLabel{\(\ctr_{\var}\)}
  \UnaryInfC{\(\Gamma, (\Gamma')^s \Rightarrow \Delta, (\Delta')^s\)}
  \DisplayProof
\]
where \(\Gamma', \Delta' \subseteq \var\).\footnote{The reason we restrict the contraction rule is that it simplifies the proof of cyclicity preserving admissibility.}

\begin{lemma}
  We have the following in \(\n{\wGL{n}}\) and in \(\n{\wGL{n}} + \cut\).
  \begin{enumerate}
    \item \(\wk\) is  cyclic admissible preserving local height and local rules.
    \item \((\botR)\), \((\toL)\) and \((\toR)\) are cyclic invertible preserving local height and local rules.
    \item \((\ctr_{\mathrm{Var}})\) is  cyclic admissible preserving local height and local rules.
  \end{enumerate}
\end{lemma}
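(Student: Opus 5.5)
We prove all three items with the same scheme. Given a (locally backedge free) cyclic proof \(\pi = (T,(\cdot)^\circ)\), we transform its main local fragment by induction on \(\lhg(\pi)\). Every path from the root leaves the main local fragment through a progressing node, i.e.\ through the premise of an instance of \((\modal{\wKT{n}})\). Above such a node we copy the original cyclic subproof unchanged, together with its backedges. Local backedge freeness is exactly what makes this legitimate. No repeat node points into the main local fragment, so every backedge of \(\pi\) lies entirely inside some copied subproof above a progressing node. The copied subproofs are therefore themselves cyclic proofs with the progress condition intact. The only part we rebuild is a finite, backedge-free tree, and the result is again a locally backedge free cyclic proof. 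The rules used in the rebuilt fragment are those of the old one (plus nothing new), so local rules are preserved, and the height does not grow.

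\emph{Weakening.} We add \(\Gamma',\Delta'\) to every sequent of the main local fragment. Rule instances of \((\ax)\), \((\botL)\), \((\botR)\), \((\toL)\), \((\toR)\) and \((\cut)\) remain instances. At an instance of \((\modal{\wKT{n}})\) the added formulas are absorbed into the weakening part of the conclusion. The premise \(\necd^n\Sigma \Rightarrow \phi\) is unchanged, so we reuse the original cyclic subproof above it verbatim.

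\emph{Invertibility.} We go by induction on \(\lhg\), case splitting on the last rule. If the inverted formula is principal, we take the corresponding immediate subproof (for \((\ax)\) the principal formula is a variable, so this case never conflicts). If it is in the weakening part of \((\ax)\), \((\botL)\) or \((\modal{\wKT{n}})\), we replace it by the premise formulas in the weakening part. For \((\modal{\wKT{n}})\) the premise is again untouched, and the subproof above it is copied. Otherwise we apply the induction hypothesis to the premises and reapply the rule.

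\emph{Contraction restricted to variables.} This is where we expect the only subtlety, and it is why \(\ctr_{\var}\) is used instead of full contraction. For general formulas, contraction at a principal compound formula requires invertibility followed by contraction on subformulas. For boxed formulas on the left, that step would push contraction into the premise of \((\modal{\wKT{n}})\), beyond the local fragment. The result would be a corecursive rather than a finite transformation, and cyclicity could be lost. With \(\Gamma',\Delta' \subseteq \var\), however, a contracted occurrence is either principal in \((\ax)\) or lies in some context. If it is principal in \((\ax)\), then \((\ax)\) still applies to the contracted sequent. If it is in the weakening part of \((\ax)\), \((\botL)\) or \((\modal{\wKT{n}})\), we simply delete it. Variables never occur in \(\nec\Sigma\) or in the principal \(\nec\phi\) of \((\modal{\wKT{n}})\), so the premise of the modal rule is unchanged. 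In every remaining case we push \(\ctr_{\var}\) to the premises by the induction hypothesis. Hence only the main local fragment is modified.
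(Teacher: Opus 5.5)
Your proposal is correct and takes the same route as the paper, whose proof is just ``a simple induction in the local height of the proofs'': you carry out that induction and modify only the main local fragment. You also make explicit a point the paper leaves implicit in its convention that all cyclic proofs in that subsection are locally backedge free. Because no companion lies in the main local fragment, the subproofs above the first progressing nodes can be copied verbatim, together with their backedges.
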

\begin{proof}
  The proof is a simple induction in the local height of the proofs.
\end{proof}

First, we show that there is a cyclic proof of L\"ob's axiom in \(\n{\wGL{n}}\).

\begin{lemma}\label{lob-axiom-cyclic-proof}
  For any \(\Gamma, \Delta\) there is a cyclic proof in \(\n{\wGL{n}}\) of \(\nec(\nec^n \phi \to \phi), \Gamma \Rightarrow \nec \phi, \Delta\).
\end{lemma}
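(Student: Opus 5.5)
We build the cyclic proof directly, with a single backedge from a node carrying the premise of \((\modal{\wKT{n}})\) back to an earlier node with the same sequent. Write \(\theta := \nec^n\phi \to \phi\). The root \(\nec\theta, \Gamma \Rightarrow \nec\phi, \Delta\) is obtained by \((\modal{\wKT{n}})\) with \(\Sigma = \set{\theta}\) from
\[
  \necd^n\theta \;=\; \nec^n\theta, \theta \Rightarrow \phi .
\]
Call this sequent \(S^\ast\); it will be the target of the backedge. The plan is to derive \(S^\ast\) from a copy of itself (possibly weakened) after passing through at least one further modal rule.

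Apply \((\toL)\) to \(\theta\) in \(S^\ast\). The right premise \(\phi, \nec^n\theta \Rightarrow \phi\) is closed by \((\Ax)\). The left premise is
\[
  \nec^n\theta \Rightarrow \phi, \nec^n\phi .
\]
We now peel off \(n\) boxes by applying \((\modal{\wKT{n}})\) \(n\) times with principal formula \(\nec^{n}\phi\), then \(\nec^{n-1}\phi\), and so on. At each step we choose \(\Sigma\) to be the boxed formula of the form \(\nec^{k}\theta\) that is present. Starting from \(\nec^n\theta \Rightarrow \nec^n\phi\) (after dropping \(\phi\) as weakening part), one application gives
\[
  \nec^{n}\nec^{n-1}\theta, \nec^{n-1}\theta \Rightarrow \nec^{n-1}\phi .
\]
At the next application we keep only \(\nec^{n-1}\theta\) in \(\Sigma\), obtaining \(\nec^{n}\nec^{n-2}\theta, \nec^{n-2}\theta \Rightarrow \nec^{n-2}\phi\). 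Continuing, after the \(n\)-th application we reach
\[
  \nec^n\theta, \theta \Rightarrow \phi ,
\]
which is exactly \(S^\ast\). Each step discards the extra \(\nec^{2n-k}\)-formulas into the weakening part, as the rule allows. We close this leaf with a repeat node pointing back to the node of \(S^\ast\) just above the root's modal rule.

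It remains to check the cyclic-proof conditions. The path from \(S^\ast\) up to its repeat passes through \(n \geq 1\) progressing premises of \((\modal{\wKT{n}})\), so the progress condition holds. All other leaves are instances of \((\Ax)\). Since \((\Ax)\) is given by a finite proof obtained by induction on the formula, these leaves are expanded into finite cut-free subproofs, which are trivially cyclic.

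The only delicate point is the bookkeeping in the \(n\)-fold chain. We must ensure the repeat sequent literally coincides with \(S^\ast\) as multisets, keeping exactly \(\nec^n\theta\) and \(\theta\) on the left and \(\phi\) on the right, with nothing extra. This works because the weakening parts of \((\modal{\wKT{n}})\) are arbitrary, so unused formulas can be dropped at each step. If one insists on the locally backedge free convention, the backedge target must lie outside the main local fragment. Here it does, since \(S^\ast\) is already a progressing premise.
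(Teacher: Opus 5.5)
Your proposal is correct and is essentially the paper's construction. Both use a $(\toL)$ on $\nec^n\phi \to \phi$ (with $(\Ax)$ closing the right branch), followed by $n$ applications of $(\modal{\wKT{n}})$ that peel boxes off $\nec^n\phi$ and return to a repeated sequent. The only difference is that the paper attaches the backedge to the left premise $\nec^n(\nec^n\phi\to\phi) \Rightarrow \nec^n\phi, \phi$ of $(\toL)$ rather than to $\necd^n(\nec^n\phi\to\phi) \Rightarrow \phi$, which is merely a rotation of the same cycle.
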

\begin{proof}
  We have the following proof
  \[
    \AxiomC{}
    \RightLabel{\(\Ax\)}
    \UnaryInfC{\(\phi, \nec^n(\nec^n \phi \to \phi) \Rightarrow  \phi\)}
    \AxiomC{}
    \RightLabel{\(\Ax\)}
    \UnaryInfC{\(\phi,\nec^{n}(\nec^n \phi \to \phi) \Rightarrow \phi\)}
    \AxiomC{\colorbox{gray!40}{\(\nec^{n}(\nec^n \phi \to \phi) \Rightarrow \nec^n \phi, \phi\)}}
    \RightLabel{\(\toL\)}
    \BinaryInfC{\(\necd^{n}(\nec^n \phi \to \phi) \Rightarrow \phi\)}
    \RightLabel{\(\modal{\wKT{n}}\)}
    \UnaryInfC{\(\necd^{n}\nec(\nec^n \phi \to \phi) \Rightarrow \nec \phi\)}
    \RightLabel{\(\modal{\wKT{n}}\)}
    \UnaryInfC{\(\raisebox{0.5em}{\vdots}\)}
    \UnaryInfC{\(\necd^{n}\nec^{n-1}(\nec^n \phi \to \phi) \Rightarrow \nec^{n-1} \phi\)}
    \RightLabel{\(\modal{\wKT{n}}\)}
    \UnaryInfC{\colorbox{gray!40}{\(\nec^n(\nec^n \phi \to \phi) \Rightarrow \nec^n \phi, \phi\)}}
    \RightLabel{\(\toL\)}
    \BinaryInfC{\( \necd^n(\nec^n \phi \to \phi) \Rightarrow  \phi\)}
    \RightLabel{\(\modal{\wKT{n}}\)}
    \UnaryInfC{\( \nec(\nec^n \phi \to \phi), \Gamma \Rightarrow  \nec \phi, \Delta\)}
    \DisplayProof
  \]
  where the cycle is marked by the graybox.
  We used that any proof generated by \((\Ax)\) is finite.
\end{proof}

\begin{lemma}\label{effective-elim-first-step}
  For any proof \(\pi \vdash S\) in \(\g{\wGL{n}} + \cut\) there is a cyclic proof \(\pi^\circ \vdash S\) in \(\n{\wGL{n}} + \cut\).
  In addition, the function \(\pi \mapsto \pi^\circ\) is effective.
\end{lemma}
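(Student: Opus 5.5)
I would define \(\pi \mapsto \pi^\circ\) by structural recursion on the (finite) proof \(\pi\) in \(\g{\wGL{n}} + \cut\). Unlike the corecursive map \(\alpha\) of Theorem~\ref{from-gwGL-to-nwGL}, it never calls itself on a proof that is not a subproof: the only place where \(\alpha\) escaped structural recursion was the modal case, where it recursed on \(\lob(\pi_0)\). I would avoid \(\lob\) altogether and simulate \((\modal{\wGL{n}})\) with a cut against the cyclic proof of L\"ob's axiom from Lemma~\ref{lob-axiom-cyclic-proof}.

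\textbf{Non-modal cases.} If the last rule \(R\) of \(\pi\) is \((\ax)\), \((\botL)\), \((\botR)\), \((\toL)\), \((\toR)\) or \((\cut)\) with immediate subproofs \(\pi_i\), then \(\pi^\circ\) applies \(R\) to the cyclic proofs \(\pi_i^\circ\). Repeat nodes of the \(\pi_i^\circ\) keep their backedges, now pointing into the corresponding copy. The condition that each cycle contains a progressing node is local to the path \((w^\circ,w]\), so it is preserved.

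\textbf{Modal case.} Suppose \(\pi\) ends with \((\modal{\wGL{n}})\) with premise \(\pi_0 \vdash \necd^n\Sigma, \nec^n\phi \Rightarrow \phi\) and conclusion \(\nec\Sigma, \Gamma \Rightarrow \nec\phi, \Delta\). By recursion I have \(\pi_0^\circ\).
\begin{enumerate}
\item Applying the cyclic admissible, effective \((\toR)\) to \(\pi_0^\circ\) gives \(\necd^n\Sigma \Rightarrow \nec^n\phi \to \phi\).
\item One application of \((\modal{\wKT{n}})\) then yields \(\nec\Sigma, \Gamma \Rightarrow \nec(\nec^n\phi\to\phi), \nec\phi, \Delta\), with the appropriate weakening part.
\item Lemma~\ref{lob-axiom-cyclic-proof} gives a cyclic proof of \(\nec(\nec^n\phi\to\phi), \nec\Sigma, \Gamma \Rightarrow \nec\phi, \Delta\); its weakening context may be chosen freely.
\item A \((\cut)\) on \(\nec(\nec^n\phi\to\phi)\) between the sequents of steps 2 and 3 gives the desired conclusion.
\end{enumerate}
Here \((\toR)\) in step 1 is used as a rule of the calculus, not its inverse. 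Alternatively, cyclic admissible weakening from the preceding lemma can be used to adjust contexts. All ingredients are effective: the proof of Lemma~\ref{lob-axiom-cyclic-proof} is an explicit finite tree with one backedge, and the \((\Ax)\) proofs are finite.

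\textbf{Main obstacle.} I expect the delicate point to be checking that the result is a genuine cyclic proof, that is, a finite tree with backedges in which every repeat node \(w\) has a progressing node in \((w^\circ,w]\). The construction only composes finitely many finite pieces, and new backedges come only from the copies of Lemma~\ref{lob-axiom-cyclic-proof}, whose cycles pass through \((\modal{\wKT{n}})\). So progress on cycles is inherited from the components. Finally, if locally backedge-free proofs are wanted, as the convention of the subsection requires, I would apply the effective unfolding at the end.
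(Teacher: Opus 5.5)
Your proposal is correct and is essentially the paper's own proof. It uses recursion on \(\pi\), commutes all non-modal rules (including \((\cut)\)), and in the modal case applies \((\toR)\) and a single \((\modal{\wKT{n}})\) step to \(\pi_0^\circ\) to reach \(\nec\Sigma,\Gamma \Rightarrow \nec(\nec^n\phi\to\phi),\nec\phi,\Delta\), then cuts against the cyclic proof of L\"ob's axiom from Lemma~\ref{lob-axiom-cyclic-proof}. Your explanation of why every cycle still contains a progressing node spells out what the paper leaves implicit: new backedges come only from the L\"ob component, whose cycle passes through \((\modal{\wKT{n}})\).
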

\begin{proof}
  We define \(\pi^\circ\) by recursion on the height of \(\pi\) and cases on the last rule \(R\) of \(\pi\).
  If \(R\) is one of \((\ax)\), \((\botL)\), \((\botR)\), \((\toL)\), \((\toR)\) or \((\cut)\) then the function is defined as
  \[
    \AxiomC{\(\pi_0\)}
    \noLine
    \UnaryInfC{\(S_0\)}
    \AxiomC{\(\ldots\)}
    \AxiomC{\(\pi_{n-1}\)}
    \noLine
    \UnaryInfC{\(S_{n-1}\)}
    \RightLabel{\(R\)}
    \TrinaryInfC{\(S\)}
    \DisplayProof
    \quad
    \overset{(\cdot)^\circ}{\longmapsto}
    \quad
    \AxiomC{\(\pi^\circ_0\)}
    \noLine
    \UnaryInfC{\(S_0\)}
    \AxiomC{\(\ldots\)}
    \AxiomC{\(\pi^\circ_{n-1}\)}
    \noLine
    \UnaryInfC{\(S_{n-1}\)}
    \RightLabel{\(R\)}
    \TrinaryInfC{\(S\)}
    \DisplayProof
  \]
  If \(R\) is \(\modal{\wGL{n}}\) then the function is defined as
  \begin{multline*}
    \AxiomC{\(\pi_0\)}
    \noLine
    \UnaryInfC{\(\necd^n \Sigma, \nec^n \phi \Rightarrow \phi\)}
    \RightLabel{\(\modal{\wGL{n}}\)}
    \UnaryInfC{\(\nec \Sigma, \Gamma \Rightarrow \nec \phi, \Delta\)}
    \DisplayProof
    \quad
    \overset{(\cdot)^\circ}{\longmapsto} \\
    \AxiomC{\(\pi^\circ_0\)}
    \noLine
    \UnaryInfC{\(\necd^n \Sigma, \nec^n \phi \Rightarrow \phi\)}
    \RightLabel{\(\toR\)}
    \UnaryInfC{\(\necd^n \Sigma \Rightarrow \nec^n \phi \to \phi\)}
    \RightLabel{\(\modal{\wKT{n}}\)}
    \UnaryInfC{\(\nec \Sigma, \Gamma \Rightarrow \nec(\nec^n \phi \to \phi), \nec \phi, \Delta\)}
    \AxiomC{}
    \RightLabel{Lm.~\ref{lob-axiom-cyclic-proof}}
    \UnaryInfC{\( \nec(\nec^n \phi \to \phi), \nec \Sigma, \Gamma \Rightarrow \nec \phi, \Delta\)}
    \RightLabel{\(\cut\)}
    \BinaryInfC{\(\nec \Sigma, \Gamma \Rightarrow \nec \phi, \Delta\)}
    \DisplayProof
  \end{multline*}
\end{proof}

With this we have the first step of the procedure covered.
Now, we proceed to define the second step.

\begin{lemma}\label{local-cut-elim-cyclic}
  For any cyclic proof \(\pi \vdash \Gamma \Rightarrow \Delta\) in \(\n{\wGL{n}} + \cut\), there is a locally cut free cyclic proof \(\pi \setminus \localCut \vdash \Gamma \Rightarrow \Delta\) in \(\n{\wGL{n}} + \cut\).
  In addition, the function \(\pi \mapsto \pi \setminus \localCut\) is effective.
\end{lemma}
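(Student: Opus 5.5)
We define \(\pi \setminus \localCut\) by induction on the number of cuts in the main local fragment of \(\pi\). The main local fragment is finite, and by the unfolding convention it contains no backedge targets, so each cut can be treated separately. It therefore suffices to prove the following claim, from which the lemma follows by repeatedly removing an uppermost local cut. \emph{Claim:} given locally cut free cyclic proofs \(\pi_0 \vdash \Gamma \Rightarrow \Delta, \chi\) and \(\pi_1 \vdash \chi, \Gamma \Rightarrow \Delta\) in \(\n{\wGL{n}} + \cut\) (assumed locally backedge free), we can effectively compute a locally cut free cyclic proof of \(\Gamma \Rightarrow \Delta\). The subproofs above the first progressing nodes are left untouched; they may contain cuts, since only local cut freeness is required.

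The claim is proved by induction on the lexicographic pair \((|\chi|, \lhg(\pi_0) + \lhg(\pi_1))\), following the case analysis of Theorem~\ref{cut-elimination-wGL} and Appendix~\ref{sec:cut-reductions}, but now read as an inductive rather than a corecursive construction. The cases are as follows.

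\begin{enumerate}
  \item \emph{Axiomatic cases.} These are finite: we use \(\wk\), or \(\ctr_{\var}\) when \(\chi\) is a variable on both sides of \((\ax)\).
  \item \emph{Commutative cases.} Here \(\chi\) is not principal in one premise. We permute the cut upward using cyclic invertibility and cyclic admissibility of \(\wk\), which preserve local height and local rules. The induction hypothesis then applies because the local height drops.
  \item \emph{Principal propositional cases.} These reduce \(|\chi|\), after using cyclic invertibility.
  \item \emph{Modal case where \(\chi\) is in the weakening part.} We simply reapply \((\modal{\wKT{n}})\) to the premise, with an adjusted weakening part.
\end{enumerate}

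The essential case is the principal modal case displayed in Theorem~\ref{cut-elimination-wGL}. There the new proof places the two new cuts \emph{above} a \((\modal{\wKT{n}})\) application, so they lie beyond a progressing node and are no longer local. Hence the constructed proof is locally cut free immediately, with no recursive call needed. The tree obtained is finite above the root fragment, apart from the reused cyclic subproofs \(\pi_0\) (used twice, once weakened, and once under a chain of \(n\) modal steps) and \(\tau_0\).

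The main obstacle is checking that the result is genuinely a cyclic proof and not merely a preproof. The concerns are these.

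\begin{itemize}
  \item The reused subproofs \(\pi_0, \tau_0\) carry backedges. These must stay internal, which holds because, by the locally backedge free convention, no backedge of \(\pi_0\) or \(\pi_1\) points into the local fragment that we discard or rearrange.
  \item Admissible rules such as \(\wk\) and inversions must act on cyclic proofs without breaking cycles. We rely on the cyclic versions of the structural lemma for this, and that is exactly why contraction is restricted to \(\ctr_{\var}\).
  \item Copying a subproof must not duplicate backedge targets in an inconsistent way.
\end{itemize}

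Since every operation involved (case distinction on the finite local fragment, the cyclic admissible rules, and tree surgery on finite trees with backedges) is computable, the map \(\pi \mapsto \pi \setminus \localCut\) is effective.
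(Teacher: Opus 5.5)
Your proposal is correct and follows the paper's route. Like the paper, you induct on the number of cuts in the main local fragment and reduce to cyclic local admissibility of \(\cut\); you prove that by the case analysis of Theorem~\ref{cut-elimination-wGL}, where the principal modal reduction puts the new cuts above a progressing node, so no recursive call is needed. You also spell out what the paper's one-line proof leaves implicit: removing an uppermost local cut, why reused subproofs keep only internal backedges under the locally backedge free convention, and the reliance on cyclic \(\wk\), the cyclic inversions and \(\ctr_{\var}\).
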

\begin{proof}
  The proof is by induction on the number of cuts in the main local fragment of \(\pi\), first showing that \(\cut\) is cyclic locally admissible in \(\n{\wGL{n}}\).
  This procedure is completely analogous to the proof of Theorem~\ref{cut-elimination-wGL}, so we skip the details.
\end{proof}

\begin{lemma}\label{effective-elim-second-step}
  For any locally cut free cyclic proof \(\pi \vdash \Gamma \Rightarrow \Delta\) in \(\n{\wGL{n}} + \cut\), there is a proof \(\pi^\bullet \vdash \Gamma \Rightarrow \Delta\) in \(\g{\wGL{n}}\).
\end{lemma}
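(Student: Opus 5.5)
The plan is to follow the proof of Theorem~\ref{from-nwGL-to-gwGL} closely, replacing the non-wellfounded proof by a finite cyclic object and inserting the local cut elimination of Lemma~\ref{local-cut-elim-cyclic} each time we pass a modal rule. Concretely, I would prove the stronger statement: for every finite sequence $(\Lambda_i)_{i<n}$ of finite sets of formulas and every locally cut free (locally backedge free) cyclic proof $\pi \vdash \Gamma \Rightarrow \Delta$ in $\n{\wGL{n}} + \cut$, we have $\g{\wGL{n}} \vdash \set{\nec^{i+1}\Lambda_i}_{i<n}, \Gamma \Rightarrow \Delta$. The induction is on the same measure
\[
  \omega\Big(\sum_{i<n} |\sub^R_{\n{\wGL{n}}}(\Gamma \Rightarrow \Delta) \setminus \Lambda_i|\Big) + \lhg(\pi).
\]
This measure depends only on the endsequent and on the local height. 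Since $\sub^R_{\n{\wGL{n}}}$ of a sequent is finite, the first coefficient is a natural number. Because the construction is a recursion on a natural-number-valued measure applied to finite objects, it is effective.

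For the non-modal local rules $(\ax)$, $(\botL)$, $(\botR)$, $(\toL)$, $(\toR)$, note that $\pi$ is locally cut free, so its last rule is never $\cut$. I would first check that the immediate subtrees are again locally cut free, locally backedge free cyclic proofs. Since $\pi$ is locally backedge free, no backedge points to the root. Hence each immediate subtree, with the inherited backedges, is a well-formed cyclic proof, and its main local fragment is contained in that of $\pi$. Then, exactly as in Theorem~\ref{from-nwGL-to-gwGL}, Lemma~\ref{local-subformula-property-nwGL} shows that the $\omega$-coefficient does not increase while the local height drops. We apply the induction hypothesis and reapply the rule with the context $\set{\nec^{i+1}\Lambda_i}_{i<n}$ added.

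The modal case $(\modal{\wKT{n}})$ is where the real work lies. Its premise $\necd^n\Sigma \Rightarrow \phi$ is a progressing node, so the subproof $\pi_0$ rooted there lies outside the main local fragment. Two issues arise.

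First, $\pi_0$ must be a cyclic proof in its own right. Backedges go from a repeat node to one of its ancestors. By local backedge freeness, no backedge from inside $\pi_0$ can target the root of $\pi$, which is the only ancestor of the premise lying outside $\pi_0$. So $\pi_0$ is closed under backedges and is a cyclic proof in $\n{\wGL{n}}+\cut$, which we unfold if necessary.

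Second, $\pi_0$ may contain cuts in its own main local fragment. I apply Lemma~\ref{local-cut-elim-cyclic} to obtain $\pi_0 \setminus \localCut \vdash \necd^n\Sigma \Rightarrow \phi$, which is locally cut free, and unfold again. Its endsequent is unchanged, so the subformula part of the measure is computed from $\necd^n\Sigma \Rightarrow \phi$ exactly as before.

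With this in hand, the two subcases go through verbatim. If $\phi \in \Lambda_0$, the conclusion is an instance of $(\Ax)$. Otherwise we use $\phi \in \sub^R_{\n{\wGL{n}}}(\necd^n\Sigma \Rightarrow \phi)$ together with Lemma~\ref{local-subformula-property-nwGL} for the modal rule. This gives a strict decrease of the $\omega$-coefficient for the rotated sequence $(\Lambda_1,\ldots,\Lambda_{n-1},\Lambda_0\union\set{\phi})$, so the local height of $\pi_0\setminus\localCut$ is irrelevant. We apply the induction hypothesis, then weakening in $\g{\wGL{n}}$, then $(\modal{\wGL{n}})$.

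I expect the main obstacle to be the bookkeeping that makes the recursion legitimate. We must make sure that subproofs of a locally backedge free cyclic proof are again cyclic proofs (needed for both the local and the modal step), and that the measure sees only endsequents plus local height. Only then does the possible growth caused by local cut elimination inside $\pi_0$ not break the induction. The subformula property is needed only for the rules of the main local fragment, which is cut free by hypothesis, while deeper cuts are removed lazily, one local fragment at a time.
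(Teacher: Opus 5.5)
Your proposal is correct and follows essentially the paper's proof: the same strengthening with the extra context $\set{\nec^{i+1}\Lambda_i}_{i<n}$, the same measure $\omega\cdot(\ldots)+\lhg(\pi)$ and case split, and the same move of applying $\cdot\setminus\localCut$ to $\pi_0$ and rotating to $(\Lambda_1,\ldots,\Lambda_{n-1},\Lambda_0\union\set{\phi})$, so that the strict drop of the $\omega$-coefficient makes the possibly larger local height irrelevant. Your extra bookkeeping on why the immediate subtrees of a locally backedge free cyclic proof are again (locally cut free) cyclic proofs is a detail the paper leaves implicit. The one slip is calling the measure natural-number-valued: it is an ordinal below $\omega^2$, i.e.\ a lexicographic pair of naturals, which is still well-founded and does not affect effectiveness.
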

\begin{proof}
  We will show that for any finite sets \((\Lambda_i)_{i < n}\) and locally cut free cyclic proof \(\pi \vdash \Gamma \Rightarrow \Delta\) there is a proof \(\pi^\bullet_{(\Lambda_i)_{i < n}} \vdash \set{\nec^{n+1} \Lambda_i}_{i < n}, \Gamma \Rightarrow \Delta\) in \(\g{\wGL{n}}\).
  From the proof, it will be clear that the process is effective.

  We proceed by induction on the measure 
  \[
    \omega\left(\sum_{i < n}|\sub^R_{\n{\wGL{n}}}(\Gamma \Rightarrow \Delta) \setminus \Lambda_i|\right) + \lhg(\pi),
  \]
  and cases on the last rule \(R\) of \(\pi\).
  If \(R\) is one of \((\ax)\), \((\botL)\), \((\botR)\), \((\toL)\) or \((\toR)\) then the proof is straightforward using the induction hypothesis in the immediate subproofs  observing that the \(\omega\)-coefficient of the measure does not increase by Lemma~\ref{local-subformula-property-nwGL} and the local height of the proof strictly decreases.
  Note that \(R\) cannot be \((\cut)\), as \(\pi\) is locally cut-free.
  Finally, assume that \(R = (\modal{\wKT{n}})\), so \(\pi\) is of shape
  \[
    \AxiomC{\(\pi_0\)}
    \noLine
    \UnaryInfC{\(\necd^n \Sigma \Rightarrow \phi\)}
    \RightLabel{\(\modal{\wKT{n}}\)}
    \UnaryInfC{\(\nec \Sigma, \Gamma \Rightarrow \nec \phi, \Delta\)}
    \DisplayProof
  \]
  where \(\pi_0\) is a cyclic proof, which we unfold if necessary, so it is in the locally backedge free respresentation.
  Let us denote the conclusion of \(\pi\) as \(S\) and the conclusion of \(\pi_0\) as \(S_0\).
  
  For defining \(\pi^\bullet_{(\Lambda_i)_{i < n}}\) we first assume that \(\phi \in \Lambda_0\).
  Then the desired proof in \(\g{\wGL{n}}\) is simply
  \[
    \AxiomC{}
    \RightLabel{\(\Ax\)}
    \UnaryInfC{\(\set{\nec^{i+1} \Lambda_i}_{i < n}, \nec \Sigma, \Gamma \Rightarrow \nec \phi, \Delta\)}
    \DisplayProof
  \]
  Now, assume that \(\phi \not \in \Lambda_0\).
  Note that \(\localCut(\pi_0) \vdash S_0\) and is a locally cut free cyclic proof in \(\n{\wGL{n}} + \cut\).
  In addition \(\phi \in \sub^R_{\n{\wGL{n}}}(S_0)\) and \(\phi \not \in \Lambda_0\) so
  \[
    |\sub^R_{\n{\wGL{n}}}(S_0) \setminus (\Lambda_0 \union \set{\phi})| < |\sub^R_{\n{\wGL{n}}}(S_0) \setminus \Lambda_0| \leq |\sub^R_{\n{\wGL{n}}}(S) \setminus \Lambda_0|
  \]
  where we used also that \(\sub^R_{\n{\wGL{n}}}(S_0) \subseteq \sub^R_{\n{\wGL{n}}}(S)\).
  By this inclusion we also have that
  \[
    |\sub^R_{\n{\wGL{n}}}(S_0) \setminus \Lambda_i| \leq |\sub^R_{\n{\wGL{n}}}(S) \setminus \Lambda_i|
  \]
  for \(1 \leq i < n\).
  Then, we can use that the inductive measure is decreased to define \(\pi^\bullet_{(\Lambda_i)_{i < n}}\) as follows
  \[
    \AxiomC{\((\pi_0 \setminus \localCut)^\bullet_{(\Lambda_1,\ldots, \Lambda_{n-1}, \Lambda_0 \union \set{\phi})}\)}
    \noLine    
    \UnaryInfC{\(\nec^n \Lambda_0, \set{\nec^i \Lambda_i}_{1 \leq i < n}, \nec^n \phi, \necd^n \Sigma \Rightarrow \phi\)}
    \RightLabel{\(\wk\)}
    \UnaryInfC{\(\necd^n \Lambda_0, \set{\necd^n \nec^i \Lambda_i}_{1 \leq i < n}, \nec^n \phi, \necd^n \Sigma \Rightarrow \phi\)}
    \RightLabel{\(\modal{\wGL{n}}\)}
    \UnaryInfC{\(\set{\necd^n \nec^{i+1} \Lambda_i}_{i < n}, \nec \Sigma, \Gamma \Rightarrow \nec \phi, \Delta\)}
    \DisplayProof
  \]
  where we used that \(\wk\) is admissible in \(\g{\wGL{n}}\).\footnote{Note that in the local height of \(\pi_0 \setminus \localCut\) might be bigger than the local height of \(\pi_0\). This does not matter us as the inductive measure is still reduced.}
\end{proof}

\begin{theorem}
  Cut is effective eliminable in \(\g{\wGL{n}}\).
\end{theorem}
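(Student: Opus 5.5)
The plan is to compose the effective procedures already established in this subsection. Given a proof \(\pi \vdash S\) in \(\g{\wGL{n}} + \cut\), I first apply Lemma~\ref{effective-elim-first-step} to obtain, effectively, a cyclic proof \(\pi^\circ \vdash S\) in \(\n{\wGL{n}} + \cut\). If necessary I unfold it, which is effective and preserves local height and local rules, so that it is locally backedge free as required by the convention of this subsection.

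Next I apply Lemma~\ref{local-cut-elim-cyclic} to \(\pi^\circ\). This yields a locally cut-free cyclic proof \(\pi^\circ \setminus \localCut \vdash S\) in \(\n{\wGL{n}} + \cut\), and the map is effective. Finally, Lemma~\ref{effective-elim-second-step} gives a cut-free proof \((\pi^\circ\setminus\localCut)^\bullet \vdash S\) in \(\g{\wGL{n}}\); I instantiate it with all \(\Lambda_i = \varnothing\), so the extra boxed context is empty and the endsequent is exactly \(S\). The composite \(\pi \mapsto (\pi^\circ \setminus \localCut)^\bullet\) is then a computable map sending proofs in \(\g{\wGL{n}}+\cut\) to cut-free proofs of the same sequent in \(\g{\wGL{n}}\), which is exactly effective cut elimination.

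The only point requiring care is the effectiveness of the third step. Its recursion on the measure \(\omega(\sum_{i<n}|\sub^R_{\n{\wGL{n}}}(\Gamma\Rightarrow\Delta)\setminus\Lambda_i|)+\lhg(\pi)\) has the following features, which I will verify in turn:
\begin{enumerate}
\item At each \((\modal{\wKT{n}})\) step it calls \(\localCut\) on the premise \(\pi_0\).
\item \(\pi_0\) is a finite object (a cyclic proof, unfolded to be locally backedge free).
\item The unfolding and the call to \(\localCut\) are both effective.
\item \(\sub^R_{\n{\wGL{n}}}\) of a sequent is a finite, computable set, so the measure is computable.
\end{enumerate}
I also check that the measure decreases at every recursive call, since the \(\omega\)-part strictly drops at modal steps and \(\lhg\) drops otherwise. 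Together these make the recursion a terminating computable procedure on finite data, so the whole construction never has to build an infinite non-wellfounded proof.
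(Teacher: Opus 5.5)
Your proposal is correct and is exactly the paper's argument: compose $(\cdot)^\circ$ from Lemma~\ref{effective-elim-first-step}, $\cdot\setminus\localCut$ from Lemma~\ref{local-cut-elim-cyclic}, and $(\cdot)^\bullet$ from Lemma~\ref{effective-elim-second-step} instantiated with all $\Lambda_i=\varnothing$. Your remarks on why the third step is effective (re-invoking $\localCut$ on each modal premise, finiteness of $\sub^R_{\n{\wGL{n}}}$, and the decreasing measure) spell out what the paper leaves implicit in that lemma. At non-modal steps the measure argument also needs the local subformula property, so that the $\omega$-part does not increase.
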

\begin{proof}
  Just compose \((\cdot)^\circ\) from Lemma~\ref{effective-elim-first-step} with \(\cdot \setminus \localCut\) from Lemma~\ref{local-cut-elim-cyclic} and then with \((\cdot)^\bullet\) from Lemma~\ref{effective-elim-second-step}.
\end{proof}

\subsection{Alternative non-wellfounded system}

We now present  an alternative non-wellfounded system, which has a better subformula property. It will be needed in the next section to obtain our interpolation results.
Define the \emph{modal modulus} of a formula \(\phi\), denoted \(\mmod{\phi} \in \mathbb{Z}/\mathbb{Z}n\), as follows:
\begin{align*}
  &\mmod{p} = \mmod{\bot} = \mmod{\phi_0 \to \phi_1} = [0]_n,
  &&\mmod{\nec \phi_0} = \mmod{\phi_0} + [1]_n.
\end{align*}

The new rule of the sequent calculus is displayed at Figure~\ref{fig:alternative-rule}.
The idea is that, as \(\wGL{n} \vdash \nec \phi \to \nec^{n+1} \phi\), whenever we put \(\necd^{n} \nec \phi\) at the left side of a sequent, we are adding redundant information, and \(\nec \phi\) should suffice.

\begin{figure}
  \[
    \AxiomC{\(\necd^n \Sigma_0, \set{\nec^i \Sigma_i}_{1 \leq i < n} \Rightarrow \phi\)}
    \RightLabel{\(\modalalt{\wKT{n}}\)}
    \UnaryInfC{\(\set{\nec^{i+1} \Sigma_i}_{i < n}, \Gamma \Rightarrow \nec\phi, \Delta\)}
    \DisplayProof
  \]
  \begin{center}
  where \(\mmod{\Sigma_i} = 0\) for \(i < n\).
  \end{center}
  \caption{Modulus rules}
  \label{fig:alternative-rule}
\end{figure}

\begin{definition}
  We define the local progress sequent calculus \(\nmod{\wGL{n}}\) is the one given by the rules \((\ax)\), \((\botL)\), \((\botR)\), \((\toL)\), \((\toR)\) of Figure~\ref{fig:rules-gwGL} and the rule \((\modalalt{\wKT{n}})\) of Figure~\ref{fig:alternative-rule}.
      Progress is only made at the premise of \((\modalalt{\wKT{n}})\).
\end{definition}

As usual, it is straightforward to see that the calculus behaves well structurally.

\begin{lemma}
  We have the following in \(\nmod{\wGL{n}} \; (+ \cut)\).
  \begin{enumerate}
    \item \(\wk\) is eliminable and admissible preserving local height and local rules.
    \item \((\botR)\), \((\toL)\), \((\toR)\) are invertible preserving local height and local rules.
    \item \(\ctr\) is eliminable and admissible preserving local height and local rules.
  \end{enumerate}
\end{lemma}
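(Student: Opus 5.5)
The plan is to follow the pattern of the corresponding lemmas for \(\g{\wGL{n}}\) and \(\n{\wGL{n}}\): an induction on the local height of the given proof in \(\nmod{\wGL{n}}\) (resp.\ \(\nmod{\wGL{n}} + \cut\)), with a case analysis on the last rule of the main local fragment. Progressing nodes are never entered by the induction; below them we simply reuse the original subproof unchanged. So every transformation only rebuilds the main local fragment, and local height and local rules are preserved automatically whenever we reapply the same rules there.

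\textbf{Weakening.} Let \(\pi \vdash \Gamma \Rightarrow \Delta\) be given and consider \(\Gamma,\Gamma' \Rightarrow \Delta,\Delta'\). If the last rule is \((\ax)\) or \((\botL)\), the weakened sequent is still an instance of the same rule. If it is \((\modalalt{\wKT{n}})\), then \(\Gamma'\) and \(\Delta'\) are absorbed into the weakening part \(\Gamma, \Delta\) of the conclusion. The premise, which is a progressing node, is kept verbatim. This uses the key observation that the context \(\Gamma\) of the modal rule is arbitrary and the side condition \(\mmod{\Sigma_i} = 0\) concerns only the \(\Sigma_i\). For \((\botR)\), \((\toL)\) and \((\toR)\), apply the induction hypothesis to the premises (smaller local height) and reapply the rule. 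Eliminability of \(\wk\) then follows from local admissibility via Theorem~\ref{local-adm-and-eliminability}: admissibility preserving local rules gives a locally \(\wk\)-free proof.

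\textbf{Invertibility.} Again induct on local height. When the inverted formula is principal, take the corresponding premise. When it is in the weakening part of \((\ax)\), \((\botL)\) or \((\modalalt{\wKT{n}})\), replace it by its components in the weakening part; this is legitimate because \(\phi\to\psi\) and \(\bot\) on the right are never among the \(\nec^{i+1}\Sigma_i\) or the principal \(\nec\phi\). Otherwise, permute the inversion upward through the last propositional rule. For \((\toL)\) in the left premise we need two outputs, and this is done exactly as in the earlier lemmas.

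\textbf{Contraction.} This is the only step needing care, and I expect the modal rule to be the main obstacle. For \((\modalalt{\wKT{n}})\), the duplicated formula may lie among the \(\nec^{i+1}\Sigma_i\). Duplicates inside a single \(\Sigma_i\) can be contracted in the premise by the induction hypothesis, but the premise is a progressing node, so induction on local height does not reach it. Instead I would reapply the modal rule with \(\Sigma_i\) replaced by \(\Sigma_i^s\): the conclusion then lists the formula once and the premise becomes \(\necd^n\Sigma_0^s, \ldots \Rightarrow \phi\). A proof of this premise is needed, which requires contraction on a progressing subproof. To handle this I would not prove contraction by plain local induction but corecursively, as usual for non-wellfounded calculi. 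Define the contraction map corecursively on the whole proof: local steps are productive, and at each modal rule a modal rule is emitted before the corecursive call. The output is therefore a preproof whose infinite branches pass through infinitely many modal rules, hence a proof. Local height and local rules are preserved because the main local fragment keeps its shape. One point must still be checked: a formula \(\nec^{i+1}\chi\) might occur both with \(\chi\in\Sigma_i\) and in the weakening part \(\Gamma\). In that case the copy in \(\Gamma\) is simply dropped, since the weakening part is arbitrary. Principal-formula duplicates in the propositional rules are handled via invertibility followed by the induction hypothesis, as in the standard argument for \(\g{\wGL{n}}\). Eliminability then again follows from Theorem~\ref{local-adm-and-eliminability}.
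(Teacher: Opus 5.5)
Your proposal is correct and follows the route the paper indicates. The paper gives no argument for this lemma beyond ``as usual'', and for the analogous lemma for \(\n{\wGL{n}}\) it only says ``induction on the local height''; your treatment of weakening and of the inversions is exactly that induction.

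For contraction you go further than the paper, and rightly so. When two copies of \(\nec^{i+1}\chi\) are both active in \((\modalalt{\wKT{n}})\), you have to contract in the premise. That premise sits above a progressing node, so an induction on local height cannot reach it, and the paper's one-line justification does not literally cover this case. Your corecursive definition repairs it:
\begin{itemize}
\item each corecursive call at a modal rule is guarded by the emitted modal rule;
\item between modal rules the local height of the argument strictly decreases, using height-preserving inversion in the principal propositional cases.
\end{itemize}
An equivalent repair within the paper's own framework is to prove local admissibility of \(\ctr\) in \(\nmod{\wGL{n}}+\ctr\) by induction on local height, simply applying \(\ctr\) on top of the progressing premise, and then invoke Theorem~\ref{local-adm-and-eliminability}. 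Either route works.

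To complete your case analysis, note that the two copies cannot come from different \(\Sigma_i\) and \(\Sigma_j\). If \(\nec^{i+1}\chi = \nec^{j+1}\chi'\) with \(i<j<n\), then \(\chi = \nec^{j-i}\chi'\), so \(\mmod{\chi} = [j-i]_n \neq [0]_n\), contradicting the side condition of the rule. This is where the modulus condition of the new rule is actually used in this lemma.
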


Clearly, if we are looking for the local subformula property this rule does not suffice: it is not true that  for any rule instance \((S_0,S)\) of \((\modalalt{\wGL{n}})\) we have that \(\sub(S_0) \subseteq \sub(S)\).
However, we can get a nice global subformula property as the following lemma shows.

\begin{lemma}\label{global-subformula-property-nmodwGL}
  For any sequent \(S'\), rule \(R\) of \(\nmod{\wGL{n}}\) and instance \((S_0,\ldots, S_{m-1}, S)\) of \(R\) we have that \(S \subseteq \Union_{j \leq n} \nec^j \set{\chi \in \sub(S') \mid \mmod{\chi} = [0]_n}\) implies \(S_i \subseteq \Union_{j \leq n} \nec^j \set{\chi \in \sub(S') \mid \mmod{\chi} = [0]_n}\) for \(i < m\).
\end{lemma}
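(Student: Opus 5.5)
The plan is to prove the statement by a direct inspection of the rules of \(\nmod{\wGL{n}}\). These are \((\ax)\), \((\botL)\), \((\botR)\), \((\toL)\), \((\toR)\) and the modal rule \((\modalalt{\wKT{n}})\) of Figure~\ref{fig:alternative-rule}; the latter is the rule referred to as \((\modalalt{\wGL{n}})\) in the remark preceding the statement. Write
\[
U := \Union_{j \leq n} \nec^j \set{\chi \in \sub(S') \mid \mmod{\chi} = [0]_n}.
\]
Everything reduces to one closure property of \(U\), which I would establish first.

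\emph{Claim A:} \(\sub(S') \subseteq U\). To prove it, take any \(\theta \in \sub(S')\) and write \(\theta = \nec^k \theta'\) with \(\theta'\) not a \(\nec\)-formula. Let \(k = qn + r\) with \(0 \leq r < n\). Then \(\theta = \nec^r(\nec^{qn}\theta')\), where \(\nec^{qn}\theta' \in \sub(S')\) and \(\mmod{\nec^{qn}\theta'} = [0]_n\). Hence \(\theta \in U\).

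\emph{Claim B:} if \(\nec\psi \in U\) then \(\psi \in U\), and if \(\psi_0 \to \psi_1 \in U\) then \(\psi_0, \psi_1 \in U\). For the box case, write \(\nec\psi = \nec^j\chi\). If \(j \geq 1\), then \(\psi = \nec^{j-1}\chi \in U\). If \(j = 0\), then \(\nec\psi \in \sub(S')\), so \(\psi \in \sub(S') \subseteq U\) by Claim A. For the implication case, an implication in \(U\) must have \(j = 0\), so it lies in \(\sub(S')\), and its components lie in \(\sub(S') \subseteq U\) by Claim A.

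With Claims A and B in hand, the propositional rules are immediate. In each of them the premises contain only formulas from the conclusion (including its weakening part) and immediate subformulas of the principal formula. The axioms \((\ax)\) and \((\botL)\) have no premises, so there is nothing to check.

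The main work is the modal rule. Its conclusion is \(\set{\nec^{i+1}\Sigma_i}_{i<n}, \Gamma \Rightarrow \nec\phi, \Delta\) with \(\mmod{\Sigma_i} = [0]_n\), and its premise is \(\necd^n\Sigma_0, \set{\nec^i\Sigma_i}_{1\le i<n} \Rightarrow \phi\). First, \(\phi \in U\) by Claim B, since \(\nec\phi \in U\).

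Next fix \(\sigma \in \Sigma_i\). Then \(\nec^{i+1}\sigma \in U\), say \(\nec^{i+1}\sigma = \nec^j\chi\) with \(\chi \in \sub(S')\), \(\mmod{\chi} = [0]_n\) and \(j \le n\). Comparing moduli gives \([i+1]_n = [j]_n\).
\begin{itemize}
\item If \(i + 1 < n\), this forces \(j = i+1\), hence \(\sigma = \chi\).
\item If \(i = n-1\), then \(j \in \set{0, n}\). For \(j = n\) we again get \(\sigma = \chi\). For \(j = 0\) we have \(\nec^n\sigma = \chi \in \sub(S')\), so \(\sigma \in \sub(S')\).
\end{itemize}
In every case \(\sigma \in \sub(S')\) with \(\mmod{\sigma} = [0]_n\). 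Therefore \(\nec^k\sigma \in U\) for all \(k \leq n\). This covers both \(\necd^n\Sigma_0\) (the exponents \(0\) and \(n\)) and \(\nec^i\Sigma_i\) for \(1 \le i < n\).

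The same argument also handles a Löb-style variant of the rule with the extra diagonal formula \(\nec^n\phi\) on the left of the premise. Indeed, from \(\nec\phi \in U\) and Claim B we get \(\phi \in U\). Applying Claim A after peeling \(\phi\) as in Claim A, we get \(\nec^n\phi \in U\): write \(\phi = \nec^r\theta\) with \(\mmod{\theta} = [0]_n\) and \(\theta \in \sub(S')\), and absorb \(n\) extra boxes into \(\theta\), since \(\nec^n\theta\) also has modulus \(0\) and is dominated by \(\nec\phi \in U\).

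The step I expect to be delicate is exactly this modulus bookkeeping: the wraparound case \(i = n-1\), \(j = 0\), and making sure the exponent \(j\) in the representation of \(U\) stays \(\le n\). Claim A is what prevents this bookkeeping from breaking down.
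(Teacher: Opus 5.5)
Your proof is correct and follows the same route as the paper. The axioms are trivial. The propositional rules go through because \(U\) is closed under subformulas; your Claims A and B justify what the paper asserts in one line, namely that \(S \subseteq U\) implies \(\sub(S) \subseteq U\). The modal rule is handled by comparing moduli, which places the \(\Sigma\)-formulas in \(\set{\chi \in \sub(S') \mid \mmod{\chi} = [0]_n}\). The paper does this only for \(\Sigma_0\), via \(\mmod{\nec\psi} = [1]_n\). Your uniform case split, including the wrap-around case \(i = n-1\), \(j = 0\), also covers \(n = 1\) cleanly.

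One correction: delete the closing remark about a L\"ob-style variant with diagonal formula \(\nec^n\phi\), because it is false. Take \(n = 2\) and \(S' = ({\Rightarrow q})\), so that \(U = \set{q, \nec q, \nec\nec q}\). The conclusion \({\Rightarrow \nec\nec q}\) lies in \(U\), with \(\phi = \nec q\). However, the diagonal formula \(\nec^2\phi = \nec^3 q\) does not lie in \(U\). Your ``absorption'' step tacitly assumes \(\nec^n\theta \in \sub(S')\), and even \(\phi \in \sub(S')\); neither follows from \(\nec\phi \in U\). This does not affect the lemma itself, since the rule \((\modalalt{\wKT{n}})\) of \(\nmod{\wGL{n}}\) has no diagonal formula.
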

\begin{proof}
  If \(R\) is \((\ax)\) or \((\botL)\) then the proof is trivial.

  Assume \(R\) is \((\botR)\), \((\toL)\) or \((\toR)\).
  For any sequent \(S\), \(S \subseteq  \Union_{j \leq n} \nec^j \set{\chi \in \sub(S') \mid \mmod{\chi} = [0]_n}\) implies \(\sub(S) \subseteq  \Union_{j \leq n} \nec^j \set{\chi \in \sub(S') \mid \mmod{\chi} = [0]_n}\).
  Then this case follows since \(\sub(S_i) \subseteq \sub(S)\) for instances of \((\botR)\), \((\toL)\) and \((\toR)\).

  Finally, assume \(R\) is \((\modalalt{\wKT{n}})\).
  Then \(R\) has shape
  \[
  \AxiomC{\(\necd^n \Sigma_0, \set{\nec^{i} \Sigma_i}_{1 \leq i < n} \Rightarrow \phi\)}
\UnaryInfC{\(\set{\nec^{i+1} \Sigma_i}_{i < n}, \Gamma \Rightarrow \nec \phi, \Delta\)}
    \DisplayProof
  \]
  where \(S\) is the conlcusion and \(S_0\) the premise.
  Since \(S \subseteq  \Union_{j \leq n} \nec^j \set{\chi \in \sub(S') \mid \mmod{\chi} = [0]_n}\) we also have that \(\sub(S) \subseteq  \Union_{j \leq n} \nec^j \set{\chi \in \sub(S') \mid \mmod{\chi} = [0]_n}\), and then it is easy to show that \(\Sigma_0, {\nec^i \Sigma_i}_{1 \leq i < n} \union \set{\phi} \subseteq  \Union_{j \leq n} \nec^j \set{\chi \in \sub(S') \mid \mmod{\chi} = [0]_n}\).
  All left is \(\nec^n \Sigma_0 \subseteq \Union_{j \leq n} \nec^j \set{\chi \in \sub(S') \mid \mmod{\chi} = [0]_n}\), so let \(\psi \in \Sigma_0\).
  As \(\nec \psi\) occurs in the left side of \(S\), we have \(\nec \psi \in \Union_{j \leq n} \nec^j \set{\chi \in \sub(S') \mid \mmod{\chi} = [0]_n}\) and since \(\mmod{\nec \psi} = [1]_n\) we get \(\nec\psi \in \nec \set{\chi \in \sub(S') \mid \mmod{\chi} = [0]_n}\) so \(\psi \in  \set{\chi \in \sub(S') \mid \mmod{\chi} = [0]_n}\).
  Then it follows that \(\nec^n \psi \in \Union_{j \leq n} \nec^j \set{\chi \in \sub(S') \mid \mmod{\chi} = [0]_n}\), as desired.
\end{proof}

Additionally, we check that the \(\nmod{\wGL{n}}\) calculus is equivalent to \(\wGL{n}\) and has cut elimination.
The easiest way of doing it is by translating proofs between \(\n{\wGL{n}}\) and \(\nmod{\wGL{n}}\).

\begin{lemma}
  The rule
  \[
    \AxiomC{\(\necd^n\nec \phi, \Gamma \Rightarrow \Delta\)}
    \RightLabel{\(\wMax[n]\)}
    \UnaryInfC{\(\nec \phi, \Gamma \Rightarrow \Delta\)}
    \DisplayProof
  \]
  is admissible in \(\n{\wGL{n}}\).
\end{lemma}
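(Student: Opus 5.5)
The plan is to reduce the rule \((\wMax[n])\) to a single cut against a short proof of \(n\)-transitivity, and then remove that cut with Theorem~\ref{cut-elimination-wGL}. First note that the premise of \((\wMax[n])\) is \(\necd^n\nec\phi, \Gamma \Rightarrow \Delta\). Unfolding \(\necd^n\), this is the sequent \(\nec^{n+1}\phi, \nec\phi, \Gamma \Rightarrow \Delta\). So the rule only asks us to discard the redundant left formula \(\nec^{n+1}\phi\) in the presence of \(\nec\phi\).

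The first step is to give a cut-free proof in \(\n{\wGL{n}}\) of \(\nec\phi, \Gamma \Rightarrow \Delta, \nec^{n+1}\phi\). We apply \((\modal{\wKT{n}})\) with \(\Sigma = \set{\phi}\) and principal formula \(\nec(\nec^n\phi)\), keeping \(\Gamma\) and \(\Delta\) as the weakening part. Its premise is \(\necd^n\phi \Rightarrow \nec^n\phi\), that is, \(\nec^n\phi, \phi \Rightarrow \nec^n\phi\), which is an instance of \((\Ax)\). This proof is finite.

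Next, let \(\pi\) be a proof of \(\nec^{n+1}\phi, \nec\phi, \Gamma \Rightarrow \Delta\). We apply \((\cut)\) with context \(\nec\phi, \Gamma \Rightarrow \Delta\) and cut formula \(\nec^{n+1}\phi\). The left premise is the proof from the previous step, and the right premise is \(\pi\). This yields a proof of \(\nec\phi, \Gamma \Rightarrow \Delta\) in \(\n{\wGL{n}} + \cut\). By Theorem~\ref{cut-elimination-wGL}, \(\cut\) is eliminable in \(\n{\wGL{n}}\), so we obtain a cut-free proof of the conclusion. This establishes admissibility.

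The only real obstacle is the case where one later needs more than admissibility, for instance preservation of local height, or a direct construction that avoids the global cut-elimination theorem. In that case I would instead define the transformation corecursively on \(\pi\). Propositional rules and axioms commute with deleting \(\nec^{n+1}\phi\) on the left. When \((\modal{\wKT{n}})\) uses \(\nec^{n+1}\phi\) as an auxiliary formula, its premise contains \(\necd^n\nec^n\phi\). Under the modal rule that formula can be replaced by \(\necd^n\phi\) together with a recursive application of the same admissible rule. The measure for the recursion would be local height, and the progress condition would be inherited from \(\pi\). The cut-based route above, however, suffices for the statement as given.
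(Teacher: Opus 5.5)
Your proposal is correct and is essentially the paper's proof: the paper also cuts the premise against the sequent \(\nec\phi, \Gamma \Rightarrow \nec^{n+1}\phi, \Delta\), which is provable in \(\n{\wGL{n}}\), and then applies cut elimination (Theorem~\ref{cut-elimination-wGL}); your one-step derivation of that sequent via \((\modal{\wKT{n}})\) and \((\Ax)\) fills in the detail the paper leaves implicit. The corecursive alternative in your last paragraph is not needed for the statement.
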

\begin{proof}
  This follows trivially using cut elimination and that \(\n{\wGL{n}} \vdash \nec \phi, \Gamma \Rightarrow \nec^{n+1} \phi, \Delta\).
\end{proof}

\begin{lemma}
  For any sequent \(S\) we have that \(\n{\wGL{n}} \; (+\cut) \vdash S\) iff \(\nmod{\wGL{n}} \; (+\cut) \vdash S\).
\end{lemma}
\begin{proof}
  It suffices to provide translations from \(\nmod{\wGL{n}} + \cut\) to \(\n{\wGL{n}} + \cut\) and from \(\n{\wGL{n}}\) to \(\nmod{\wGL{n}}\).
  
  Translation from \(\nmod{\wGL{n}} + \cut\) to \(\n{\wGL{n}} + \cut\).
  We define a corecursive function \(\alpha\) from proofs in \(\nmod{\wGL{n}} + \cut\) to preproofs in \(\n{\wGL{n}} + \cut\).
  Once it is defined we will argue that \(\alpha(\pi)\) is always a proof and not only a preproof.
  We proceed by cases on the last rule \(R\) of \(\pi\), if it is one of \((\ax)\), \((\botL)\), \((\botR)\), \((\toL)\), \((\toR)\) or \((\cut)\) then it simply commutes.
  If \(R = (\modalalt{\wKT{n}})\), then we define \(\alpha\) as follows.
  \[
    \AxiomC{\(\pi_0\)}
    \noLine
    \UnaryInfC{\( \necd^n \Sigma_0, \set{\nec^{i} \Sigma_i}_{1 \leq i < n} \Rightarrow \phi\)}
    \RightLabel{\(\modalalt{\wKT{n}}\)}
    \UnaryInfC{\( \set{\nec^{i+1} \Sigma_i}_{i < n}, \Gamma \Rightarrow \nec \phi, \Delta\)}
    \DisplayProof
    \quad
    \overset{\alpha}{\longmapsto}
    \quad
    \AxiomC{\(\alpha(\wk(\pi_0))\)}
    \noLine
    \UnaryInfC{\( \necd^n \Sigma_0, \set{\necd^n\nec^{i} \Sigma_i}_{1 \leq i < n} \Rightarrow \phi\)}
    \RightLabel{\(\modal{\wKT{n}}\)}
    \UnaryInfC{\( \set{\nec^{i+1} \Sigma_i}_{i < n}, \Gamma \Rightarrow \nec\phi, \Delta\)}
    \DisplayProof
  \]
  We can show that \(\alpha(\pi)\) is a proof by using the measure \(\lhg(\pi)\) on the corecursive calls.

  Translation from \(\n{\wGL{n}}\) to \(\nmod{\wGL{n}}\).
  We define a corecursive function \(\beta\) from proofs in \(\n{\wGL{n}}\) to preproofs in \(\nmod{\wGL{n}}\).
  Once it is defined we will argue that \(\beta(\pi)\) is always a proof and not only a preproof.
  We proceed by cases on the last rule \(R\) of \(\pi\), if it is one of \((\ax)\), \((\botL)\), \((\botR)\), \((\toL)\) or \((\toR)\) then it simply commutes.
  If \(R = (\modalalt{\wKT{n}})\), then we define \(\alpha\) as follows.
  \[
    \AxiomC{\(\pi_0\)}
    \noLine
    \UnaryInfC{\(\set{\necd^{n}\nec^{i} \Sigma_i}_{i < n} \Rightarrow \phi\)}
    \RightLabel{\(\modal{\wKT{n}}\)}
    \UnaryInfC{\(\set{\nec^{i+1} \Sigma_i}_{i < n}, \Gamma \Rightarrow \nec \phi, \Delta\)}
    \DisplayProof
    \quad
    \overset{\beta}{\longmapsto}
    \quad
    \AxiomC{\(\beta(\wMax[n](\pi_0))\)}
    \noLine
    \UnaryInfC{\(\necd^{n} \Sigma_0, \set{\nec^{i} \Sigma_i}_{i < n} \Rightarrow \phi\)}
    \RightLabel{\(\modalalt{\wKT{n}}\)}
    \UnaryInfC{\(\set{\nec^{i+1} \Sigma_i}_{i < n}, \Gamma \Rightarrow \nec \phi, \Delta\)}
    \DisplayProof
  \]
  where we assumed that \(\mmod{\Sigma_i} = 0\).
  Again, we can show that \(\alpha(\pi)\) is a proof by using the measure \(\lhg(\pi)\) on the corecursive calls.
\end{proof}

Thanks to the better behaved subformula property, we immediately obtain that this proof system has terminating proof search.
For that reason, we will use its rules in the next section to create interpolation templates.

\section{Uniform Lyndon Interpolation}
\label{sec:interpolation}

Using the non-wellfounded calculi \(\n{\wGL{n}}\) and \(\nmod{\wGL{n}}\) we show that \(\wGL{n}\) has uniform Lyndon interpolation.
We will use \(\nmod{\wGL{n}}\) to create cyclic proof search trees.
Such a tree carries all the information of how the sequent can be use in any proof of \(\nmod{\wGL{n}}\).
Thanks to this, it is possible to define the uniform interpolant from it via recursion on the tree structure and solving a modal equational system to eliminate the cycles.

The use of a non-wellfounded calculus is not a caprice.
Since we are interested in the polarities of variables, as we want to show uniform Lyndon inteporlation, we need a calculus that treat the polarities adequately.
The wellfounded calculi for \(\wGL{n}\) does not fulfill this condition, as the diagonal formula makes a negative occurence of a formula that may have only positive occurences prior to its application.

\subsection{Preinterpolant}

For the interpolation it is important that we create an equational system of depth \(0\).
With this purpose in mind, we will start recording the number of applications of modal rules that we have used from the root, modulo \(n\).
A sequent will then be a triple \((\Gamma, \Delta, [i]_n)\) such that \(\Gamma, \Delta\) are finite multisets of formulas and \(i \in \mathbb{N}\).
Such a sequent will usually be denoted as \(\Gamma \Rightarrow_i \Delta\).
A \(0\)-sequent is a sequent of the shape \(\Gamma \Rightarrow_{[0]_n} \Delta\).
In Figure~\ref{fig:interpolation-template} we have the rules that we will use in interpolation templates.

\begin{figure}
  \[
    \AxiomC{\(\)}
    \RightLabel{\(\ax\)}
    \UnaryInfC{\(p, \Gamma \Rightarrow_{[i]_n} p, \Delta\)}
    \DisplayProof
    \qquad
    \AxiomC{}
    \RightLabel{\(\emp\)}
    \UnaryInfC{\( \Rightarrow_{[i]_n} \)}
    \DisplayProof
  \]

  \[
    \AxiomC{\(\)}
    \RightLabel{\(\botL\)}
    \UnaryInfC{\(\bot, \Gamma \Rightarrow_{[i]_n} \Delta\)}
    \DisplayProof
    \qquad
    \AxiomC{\(\Gamma \Rightarrow_{[i]_n} \Delta\)}
    \RightLabel{\(\botR\)}
    \UnaryInfC{\(\Gamma \Rightarrow_{[i]_n} \bot, \Delta\)}
    \DisplayProof
  \]

  \[
    \AxiomC{\(\Gamma \Rightarrow_{[i]_n} \phi, \Delta\)}
    \AxiomC{\(\psi, \Gamma \Rightarrow_{[i]_n} \Delta\)}
    \RightLabel{\(\toL\)}
    \BinaryInfC{\(\phi \to \psi, \Gamma \Rightarrow_{[i]_n} \Delta\)}
    \DisplayProof
    \qquad
    \AxiomC{\(\phi, \Gamma \Rightarrow_{[i]_n} \psi, \Delta\)}
    \RightLabel{\(\toR\)}
    \UnaryInfC{\(\Gamma \Rightarrow_{[i]_n} \phi \to \psi, \Delta\)}
    \DisplayProof
  \]

  \[
    \AxiomC{\(\necd^n \Sigma^s_0, \set{\nec^{j} \Sigma^s_j}_{1 \leq j < n} \Rightarrow_{[i]_n} \)}
    \AxiomC{\(\left[\necd^n \Sigma^s_0, \set{\nec^{j} \Sigma^s_i}_{1 \leq j < n} \Rightarrow_{[i]_n} \phi\right]_{\phi \in \Theta}\)}
    \RightLabel{\(\modal[+]{\wKT{n}}\)}
    \BinaryInfC{\(\set{\nec^{j+1} \Sigma_j}_{j < n}, \Gamma' \Rightarrow_{[i+1]_n} \nec \Theta, \Delta'\)}
    \DisplayProof
  \]
  where \(\mmod{\Sigma_i} \subseteq \set{0}\) for \(i < n\) and \(\Gamma', \Delta' \subseteq \text{Var}\) such that \(\Gamma' \inter \Delta' = \varnothing\).
  \caption{Interpolation template rules}
  \label{fig:interpolation-template}
\end{figure}

\begin{definition}
  An \emph{interpolation template for \(\Gamma \Rightarrow \Delta\)} is a cyclic proof of \(\Gamma \Rightarrow \Delta\) in the local progress calculus generated by the rules of Figure~\ref{fig:interpolation-template} where progress is only made at the premises of \((\modal[+]{\wKT{n}})\)
  In addition, we impose that any repetition node is a \(0\)-sequent.
\end{definition}

The first step is showing that any sequent has an interpolation template.
For that purpose, we first establish that the rules of the interpolation template have a subformula property.

\begin{lemma}[Global subformula property]
  For any sequent \(S'\), rule \(R\) of interpolation templates and rule instance \((S_0,\ldots, S_{m-1}, S) \in R\), we have that
  \[S \subseteq \Union_{j \leq n} \nec^j \set{\chi \in \sub(S) \mid \mmod{\chi} = [0]_n} \quad \text{implies}\quad S_i \subseteq \Union_{j \leq n} \nec^j \set{\chi \in \sub(S) \mid \mmod{\chi} = [0]_n}\] for \(i < m\).
\end{lemma}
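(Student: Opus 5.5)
The statement is presumably meant with \(\sub(S')\) on both sides, as in Lemma~\ref{global-subformula-property-nmodwGL}: \(S'\) is a fixed ambient sequent. Accordingly, write \(X := \Union_{j \leq n} \nec^j \set{\chi \in \sub(S') \mid \mmod{\chi} = [0]_n}\). The plan is to mirror the proof of Lemma~\ref{global-subformula-property-nmodwGL}, case by case on the rule \(R\), ignoring the index \([i]_n\) attached to the sequent arrow since membership in \(X\) concerns only the formulas. The basic observation, used throughout, is that \(X\) is closed under subformulas. Indeed, if \(\nec^j\chi \in X\) with \(\mmod{\chi} = [0]_n\) and \(\chi \in \sub(S')\), then any proper subformula is either \(\nec^{j'}\chi\) with \(j' < j\), hence in \(X\), or a proper subformula \(\theta\) of \(\chi\). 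In the latter case write \(\theta = \nec^{k}\theta_0\) with \(\theta_0\) not a \(\nec\)-formula, so \(\mmod{\theta_0} = [0]_n\). Since \(\theta_0 \in \sub(S')\), we need \(k \leq n\) to put \(\theta\) in \(X\). If \(k > n\), then \(\nec^{k-n}\theta_0\) (or a suitable \(\nec^{k'}\theta_0\) with \(k' \equiv 0\)) lies in \(\sub(S')\) with modulus \(0\), so \(\theta = \nec^{k''}\) of it for some \(k'' \leq n\). So the first step is to record this lemma: \(S \subseteq X\) implies \(\sub(S) \subseteq X\).

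With that in hand, the rules \((\ax)\), \((\botL)\) and \((\emp)\) have no premises and are trivial. The rules \((\botR)\), \((\toL)\) and \((\toR)\) only introduce immediate subformulas of the principal formula into the premises, so closure under subformulas finishes these cases.

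The substantive case is \((\modal[+]{\wKT{n}})\). Its premises consist of \(\necd^n \Sigma^s_0\), the sets \(\nec^j \Sigma^s_j\) for \(1 \leq j < n\), and possibly one \(\phi \in \Theta\) on the right. The formulas \(\nec^j\psi\) for \(\psi \in \Sigma_j\) (with \(1 \leq j < n\)), the formulas \(\psi \in \Sigma_0\), and the formulas \(\phi\) are all subformulas of formulas in the conclusion, namely of \(\nec^{j+1}\psi\) and \(\nec\phi\), so they lie in \(X\) by closure. Passing to \(\Sigma^s\) only removes duplicates and is harmless. The only new formulas are \(\nec^n\psi\) for \(\psi \in \Sigma_0\), and here I would use the side condition \(\mmod{\psi} = [0]_n\). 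Since \(\nec\psi \in X\) and \(\mmod{\nec\psi} = [1]_n\), the decomposition \(\nec\psi = \nec^j\chi\) with \(\mmod{\chi} = [0]_n\) and \(j \leq n\) forces \(j = 1\) (the value \(j = n+1\) is excluded). Hence \(\psi = \chi\) is in \(\sub(S')\) with modulus \([0]_n\), and therefore \(\nec^n\psi \in X\).

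The main obstacle is the closure-under-subformulas step, in particular the bookkeeping on modal prefixes for \(\theta\) deep inside \(\chi\). A careful induction on \(\chi\) handles it. Write every formula uniquely as \(\nec^k\theta_0\) with \(\theta_0\) non-boxed, and note that \(\nec^{k \bmod n}\theta_0\) already has the form \(\nec^{j}\chi'\) with \(\chi' = \nec^{k - (k \bmod n)}\theta_0 \in \sub(S')\) of modulus \([0]_n\) whenever \(\nec^k\theta_0 \in \sub(S')\). Applying this normal form to both the given formula and its subformulas makes the argument uniform.
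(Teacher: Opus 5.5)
Your proposal is correct and follows the paper's own route (the paper's proof just defers to that of Lemma~\ref{global-subformula-property-nmodwGL}): the set \(\Union_{j \leq n} \nec^j \set{\chi \in \sub(S') \mid \mmod{\chi} = [0]_n}\) is closed under subformulas, which handles the propositional rules, and in the modal rule the side condition \(\mmod{\psi} = [0]_n\) puts \(\psi\) in \(\set{\chi \in \sub(S') \mid \mmod{\chi} = [0]_n}\), so \(\nec^n\psi\) lies in the set; you are also right that \(\sub(S)\) in the statement should read \(\sub(S')\). Your normal-form argument usefully fills in the subformula-closure step that the paper only asserts, but two details need fixing: the intermediate claim that \(\nec^{k-n}\theta_0\) has modulus \([0]_n\) is false as stated, and only your final \(k \bmod n\) decomposition is correct; and for \(n = 1\) the decomposition \(\nec\psi = \nec^j\chi\) may also have \(j = 0\), which is harmless because then \(\psi \in \sub(S')\) anyway.
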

\begin{proof}
  The proof is completely analogous to the proof of Lemma~\ref{global-subformula-property-nmodwGL}.
\end{proof}

Let \(\Gamma \Rightarrow \Delta\) be a sequent, define \(|\Gamma \Rightarrow \Delta| = \sum_{\phi \in \Gamma \union \Delta} |\phi|\), where we remember \(|\phi|\) is the logical complexity of \(\phi\).
We will use that measure in the following lemma.

\begin{lemma}
  Every sequent has an interpolation template.
\end{lemma}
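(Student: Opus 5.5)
We build the template for a given sequent \(S' = \Gamma \Rightarrow \Delta\) (read as the \(0\)-sequent \(\Gamma \Rightarrow_{[0]_n} \Delta\)) by backward proof search with a fixed deterministic strategy, and then close cycles using finiteness. Let \(X = \Union_{j \leq n} \nec^j \set{\chi \in \sub(S') \mid \mmod{\chi} = [0]_n}\). Every formula \(\chi\) can be written as \(\nec^k \chi'\) with \(\mmod{\chi'} = [0]_n\) and \(k < n\), so \(S' \subseteq X\). By the global subformula property, every sequent produced by search has all its formulas in \(X\), which is finite.

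\emph{Search strategy.} Given a sequent \(\Gamma \Rightarrow_{[i]_n} \Delta\), we proceed as follows.
\begin{enumerate}
  \item If it is an instance of \((\ax)\), \((\botL)\) or \((\emp)\), we stop.
  \item Otherwise, if some formula is not a variable and not a \(\nec\)-formula on the right, nor a \(\nec\)-formula on the left, we apply the corresponding rule \((\botR)\), \((\toL)\) or \((\toR)\). Each premise has strictly smaller measure \(|\cdot|\), so this phase terminates by induction on \(|\Gamma \Rightarrow \Delta|\).
  \item Otherwise the sequent has the form \(\nec \Pi, \Gamma' \Rightarrow_{[i]_n} \nec \Theta, \Delta'\) with \(\Gamma', \Delta' \subseteq \var\). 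Since it is not an axiom, \(\Gamma' \cap \Delta' = \varnothing\). We decompose each element of \(\nec\Pi\) uniquely as \(\nec^{j+1}\sigma\) with \(\mmod{\sigma} = 0\) and \(j < n\), which determines the \(\Sigma_j\). We then apply \((\modal[+]{\wKT{n}})\) with these \(\Sigma_j\) and this \(\Theta\).
\end{enumerate}
One point needs checking in step 3: the conclusion is indexed \([i+1]_n\) while the premises are indexed \([i]_n\). Read upward, the counter therefore decreases by one modulo \(n\) at each modal step, so it still records the number of modal steps from the root modulo \(n\). The contraction to \(\Sigma^s_j\) in the premises keeps all premises sets of formulas drawn from \(X\).

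\emph{Finiteness and cycles.} Along any branch, the \(0\)-sequents occurring immediately at premises of \((\modal[+]{\wKT{n}})\) have, up to the contractions, underlying sets from the finite family of subsets of \(X\). We take the premises of the modal rule as the designated candidates for repetition. Their left sides are sets by construction, and their right sides contain at most one formula, so there are only finitely many such sequents.

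Hence, by K\H{o}nig's lemma, every branch that does not terminate must contain two \(0\)-premises of modal rules with the same sequent, and in particular the same index \([0]_n\). Between these two occurrences there is a progressing node, namely the lower of the two is itself a progressing premise. We cut the branch at the upper occurrence and add a backedge to the lower one. Because the tree is finitely branching and each branch is cut at bounded depth (bounded by roughly \(n\) times the number of candidate \(0\)-sequents, times the propositional-phase bound), the resulting tree is finite. It is therefore a cyclic proof in the template calculus, and all repeat nodes are \(0\)-sequents as required.

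The main obstacle is ensuring that repetitions occur only at \(0\)-sequents. Matching the index mod \(n\) is exactly what forces the depth-\(0\) property that is later needed for the equational system. The remaining risk is bookkeeping: verifying that the rule of Figure~\ref{fig:interpolation-template} can always be applied to the saturated sequent, including when \(\Theta = \varnothing\), where only the left premise appears.
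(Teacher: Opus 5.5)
Your proposal is correct and follows essentially the paper's route: backward proof search with repeats only at \(0\)-sequents, termination because every non-modal rule strictly decreases \(|\cdot|\) and the index cycles modulo \(n\), and a pigeonhole argument on the finitely many possible premises of \((\modal[+]{\wKT{n}})\) supplied by the global subformula property. Restricting repeat targets to modal premises only makes the progress condition immediate, where the paper instead derives it from the decrease of \(|\cdot|\) between \(w^\circ\) and \(w\); note one small slip, namely that the progressing witness must lie in \((w^\circ, w]\), so it is the upper occurrence \(w\) (itself a modal premise), not the lower one \(w^\circ\), that supplies progress.
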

\begin{proof}
  Notice that if \(R\) is distinct from \((\modal[+]{\wKT{n}})\) then \(|S_i| < |S|\) for \(i < n\).
  Also, notice that for any sequent \(\Gamma \Rightarrow_{[i]_n} \Delta\) there is always one rule of interpolation templates that can be applied to it.
  Then we proceed as follows to construct an interpolation template for \(\Gamma \Rightarrow \Delta\):
  \begin{enumerate}
    \item Stage 0. Start by putting \(\Gamma \Rightarrow_{[0]_n} \Delta\) at the root of the tree without a rule.
    \item Stage \(n+1\). For each leaf \(w\) at stage \(n\) without a rule do one of the following (in order):
      \begin{enumerate}
        \item If the sequent at \(w\) is a \(0\)-sequent and there is a \(v < w\) with the same sequent, make \(w\) a repeat node pointing to \(v\).
        \item Otherwise, select a rule \(R\) and a rule instance \((S_0,\ldots,S_{n-1},S) \in R\) such that \(w\) is annotated with sequent \(S\). Annotate \(w\) with the rule \(R\) and create new nodes \(w0\), \ldots, \(w(n-1)\) immediate successors of \(w\) annotated with sequent \(S_0,\ldots,S_{n-1}\) respectively.
      \end{enumerate}
  \end{enumerate}
  First, we show that this process finishes.

  Suppose otherwise, then at the limit we would have an non-wellfounded tree with at least one infinite branch \((w_i)_{i \in \mathbb{N}}\).
  Let \(w_i\) be annotated with sequent \(S_i\) and rule \(R_i\), notice that all the \(S_i\)s which are \(0\)-sequents are pairwise different, as otherwise the infinite branch would have been closed during the construction with a repeat.
  If there are only finitely many \(i\)s such that \(R_i = (\modal[+]{\wKT{n}})\) then there is a maximum \(i_0\) such that for any \(j > i_0\) we have that \(R_i \neq (\modal[+]{\wKT{n}})\).
  Then \(|S_{i_0}| > |S_{i_0 + 1}| > \cdots > |S_{i_0 + k}| > \cdots\), which is impossible.
  So there are infinitely many \(i\)s such that \(R_i = (\modal[+]{\wKT{n}})\).
  Additionally, as the only rule that change the number of the sequent arrow is \((\modal[+]{\wKT{n}})\) and it only changes in a cycle as \([0]_n\), \([n-1]_n\), \ldots, \([1]_n\), \([0]_n\), \ldots  
  Then it must be the case that there are infintely many \(i\)s such that \(R_i = (\modal[+]{\wKT{n}})\) and \(S_i\) is a \(0\)-sequent.
  Let \(\set{i_j}_{j \in \mathbb{N}}\) be those \(i\)s.

  Let \(m\) be the cardinality of \(\set{\phi \in \sub(\Gamma \Rightarrow \Delta) \mid \mmod{\phi} = [0]_n}\), then we have that the cardinality of the set \(\Union_{i \leq n} \nec^i \set{\phi \in \sub(\Gamma \Rightarrow \Delta) \mid \mmod{\phi} = [0]_n}\) is at most \((n+1)m\). 
  By the global subformula property and since premises of \((\modal[+]{\wKT{n}})\) are determined by \(n\)-many sets of formulas in \(\set{\phi \in \sub(\Gamma \Rightarrow \Delta) \mid \mmod{\phi} = [0]_n}\) (the \(\Sigma^s_i\)s) and a possible choice of a formula in \(\Union_{i \leq n} \nec^i \set{\phi \in \sub(\Gamma \Rightarrow \Delta) \mid \mmod{\phi} = [0]_n}\) (the \(\phi\)), we have that the possible number of premises of \((\modal[+]{\wKT{n}})\) in \(T\) is at most \(2^{mn}((n+1)m+1)\), i.e., finite.
  However, in \((w_i)_{i \in \mathbb{N}}\) there are infinitely many premises of \((\modal[+]{\wKT{n}})\), so at least there is a repeated  \(0\)-sequent, a contradiction.

  The created tree \(T\) is clearly a cyclic preproof.
  Given a node \(w\) of \(T\) let \(S_w\) be the sequent at \(w\) and \(R_w\) the rule at \(w\).
  Asumme \(w \in \rep(T)\) such that for each \(v \in (w^\circ,w]\) is non-progressing.
  Then for each \(v \in [w^\circ,w)\), \(R_v \neq (\modal[+]{\wKT{n}})\) so \(|S_{w^\circ}| > |S_w| = |S_{w^\circ}|\), a contradiction.
\end{proof}

We fix vocabularies \(V_+\) and \(V_-\) for which we want to calculate the uniform Lyndon interpolant.
The construction of the interpolant is divided in two parts.
First, we calculate a formula at each node \(w\) of the interpolation template, called the \emph{preinterpolant}.
The preinterpolants will still have some variables outside \(V_+\) and \(V_-\) that appear at repeat nodes.
However, from the preinterpolants we can obtain a positive modalized Kurahashi-Lyndon equational system of depth \(0\).
By applying the solution of this equational system to the preinterpolant at the root, we obtain the desired interpolant.
We finish this subsection by defining the preinterpolants.

\begin{definition}[Preinterpolant]
  Let \(T\) be an interpolation template and \(w\) a node of \(T\), let us denote the sequent at \(w\) as \(\Gamma_w \Rightarrow_{[i_w]_n} \Delta_w\).
  We will annotate each of the sequents of \(T\) with a formula \(\kappa\) called the \emph{preinterpolant at \(\kappa\)}, denoted as \(\kappa : \Gamma_w \Rightarrow_{[i_w]_n} \Delta_w\).
  We proceed by induction on the tree structure of \(T\).
  If \(w\) is a repeat node of shape
  \[
    \AxiomC{\(\)}
    \RightLabel{\(\rep\)}
    \UnaryInfC{\(\Gamma_w \Rightarrow_{[0]_n} \Delta_w\)}
    \DisplayProof
  \]
  then \(\kappa_w\) will be a fresh propositional variable \(x_w\).
  This variable shall not appear in any formula of \(T\) and for repeat nodes \(w,v\) such that \(w \neq v\) we must have that \(x_w \neq x_v\).
  In case \(w\) is not a repeat node we proceed by cases on the rule at \(w\), we describe the construction pictorically as follows (the preinterpolant at the conclusion is defined recursively from the preinterpolants at the premises):
  \[
    \AxiomC{\(\)}
    \RightLabel{\(\ax\)}
    \UnaryInfC{\(\bot : p, \Gamma \Rightarrow_{[i]_n} p, \Delta\)}
    \DisplayProof
    \qquad
    \AxiomC{}
    \RightLabel{\(\emp\)}
    \UnaryInfC{\(\top : {\Rightarrow_{[i]_n}} \)}
    \DisplayProof
    \qquad
    \AxiomC{\(\)}
    \RightLabel{\(\botL\)}
    \UnaryInfC{\(\bot : \bot, \Gamma \Rightarrow_{[i]_n} \Delta\)}
    \DisplayProof
    \qquad
    \AxiomC{\(\kappa : \Gamma \Rightarrow_{[i]_n} \bot, \Delta\)}
    \RightLabel{\(\botR\)}
    \UnaryInfC{\(\kappa : \Gamma \Rightarrow_{[i]_n} \bot, \Delta\)}
    \DisplayProof
  \]

  \[
    \AxiomC{\(\kappa_0 : \Gamma \Rightarrow_{[i]_n} \phi, \Delta\)}
    \AxiomC{\(\kappa_1 : \psi, \Gamma \Rightarrow_{[i]_n} \Delta\)}
    \RightLabel{\(\toL\)}
    \BinaryInfC{\(\kappa_0 \vee \kappa_1 : \phi \to \psi, \Gamma \Rightarrow_{[i]_n} \Delta\)}
    \DisplayProof
    \qquad
    \AxiomC{\(\kappa : \phi, \Gamma \Rightarrow_{[i]_n} \psi, \Delta\)}
    \RightLabel{\(\toL\)}
    \UnaryInfC{\(\kappa : \Gamma \Rightarrow_{[i]_n} \phi \to \psi, \Delta\)}
    \DisplayProof
  \]

  \[
    \AxiomC{\(\kappa^{\nec} : \necd^n \Sigma^s_0, \set{\nec^{j} \Sigma^s_j}_{1 \leq j < n} \Rightarrow_{[i]_n} \)}
    \AxiomC{\(\left[\kappa^{\nec}_\phi : \necd^n \Sigma^s_0, \set{\nec^{j} \Sigma^s_j}_{1 \leq j < n} \Rightarrow_{[i]_n} \phi\right]_{\phi \in \Theta}\)}
    \RightLabel{\(\modal[+]{\wKT{n}}\)}
    \BinaryInfC{\(\kappa : \set{\nec^{j+1} \Sigma_j}_{j < n}, \Gamma \Rightarrow_{[i+1]_n} \nec \Theta, \Delta\)}
    \DisplayProof
  \]
  where \(\kappa := \nec \kappa^{\nec} \wedge \bigwedge_{\phi \in \Theta} \pos \kappa^{\nec}_\phi \wedge \bigwedge (\Gamma \inter V_+) \wedge \bigwedge \neg (\Delta \inter V_-)\).
\end{definition}

\subsection{Interpolant}

Once we have the preinterpolants defined all left to do is to solve the equational system of the template to get the interpolant.
For that purpose let \(T\) be an interpolation template, for each node \(w\) let \(\kappa_w\) be the preinterpolant at \(w\).
Recall that the \emph{height of a node \(w\)}, denoted \(\hg(w)\), is the height of the subtree generated at \(w\).
Then, the leaves have height \(0\) and the height of the root is the same as the height of the tree.
For each \(i \in \mathbb{N}\) let \(\bar{x}_i\) be an arbitrary enumeration of \(\set{x_w \mid w \in \rep(T), \hg(w^\circ) = i}\).
If \(\hg(T) = H\), then we define \(\bar{x}_T = \bar{x}_0 \bar{x}_1 \cdots \bar{x}_H\).
Note that \(\bar{x}_T\) is an enumeration of \(\set{x_w \mid w \in \rep(T)}\).
We define the \emph{equational system of \(T\)}, denoted \(\mathcal{E}_T\), as
\[
  \set{(x_w, +, \kappa_{w^\circ}) \mid w \in \rep(T)}.
\]
In the following lemma, we show that \(\mathcal{E}_T\) is sovable in \(\wGL{n}\).

\begin{lemma}
  \(\mathcal{E}_T\) is a positive modalized Kurahashi-Lyndon equation system  of depth \(0\) over\\ \((\bar{x}_T, V_+, V_-, x_w \mapsto \set{0,\ldots,n-1})\).
  As a corollary it has a solution in \(\wGL{n}\).
\end{lemma}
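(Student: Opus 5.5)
We verify the defining conditions one by one and then invoke Theorem~\ref{positive-modalized-equation-systems-solvable}(2) for solvability. The one choice that matters is to read off the shape of each \(\kappa_{w^\circ}\) by induction on the tree structure of \(T\). For each node \(v\), the preinterpolant \(\kappa_v\) is built from \(\bot\), \(\top\), \(\vee\), \(\wedge\), atoms in \(\Gamma_v\cap V_+\), negated atoms in \(\Delta_v \cap V_-\), \(\nec\) and \(\pos\). The recursion variables \(x_u\) enter only at repeat leaves.

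\emph{Polarity.} We show by induction on \(\hg(v)\) that \(\voc_+(\kappa_v) \subseteq V_+ \cup \bar{x}_T\) and \(\voc_-(\kappa_v)\subseteq V_-\). Disjunction, conjunction, \(\nec\) and \(\pos = \neg\nec\neg\) preserve polarity. Atoms from \(\Gamma\cap V_+\) are positive, and \(\neg q\) with \(q \in V_-\) contributes \(q\) negatively. The fresh \(x_w\) appear only positively. Hence, with \(b = +\) throughout, \(B_+ = \bar{x}_T\) and \(B_- = \varnothing\), condition~2 of the definition holds, and the system is positive.

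\emph{Depth.} Each \(x_u\) and each atom \(q\in V_\pm\) sits inside a preinterpolant. We track the number of modalities above it together with the index \([i]_n\) on the sequent arrow. Along any path, the index increases by one (reading upward it goes \([i+1]_n \mapsto [i]_n\)) exactly when a \((\modal[+]{\wKT{n}})\) rule is passed, which is exactly when one \(\nec\) or \(\pos\) is added. Hence an occurrence in \(\kappa_v\) that originates at a descendant \(u\) lies at depth congruent to \(i_v - i_u\) modulo \(n\). Repeat nodes \(u\) and their companions \(w^\circ\) are \(0\)-sequents by the definition of interpolation templates, and \(w^\circ\) has the same sequent as \(w\). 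So \(\dep_n(\kappa_{w^\circ}, x_u) \subseteq \set{[0]_n}\), which is the depth~\(0\) condition. The depth specification \(x_w\mapsto\set{0,\ldots,n-1}\) is the full set \(\mathbb{Z}/\mathbb{Z}n\), so condition~3 is trivial (I read \(d\) as defined on \(V_+\cup V_-\), constantly all residues).

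\emph{Modalization.} This is the delicate step, and the ordering of \(\bar{x}_T\) by \(\hg(w^\circ)\) is chosen for it. Take the \(k\)-th unknown \(x_w\) with equation \(\kappa_{w^\circ}\), and an unknown \(x_u\) occurring in it with index at most \(k\). Then \(u\) lies strictly above \(w^\circ\), and \(\hg(u^\circ) \le \hg(w^\circ)\).

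We must show that every path from \(w^\circ\) to \(u\) passes through a \((\modal[+]{\wKT{n}})\) premise. Since the node carrying the variable is the leaf \(u\) and \(\hg(u^\circ)\le \hg(w^\circ)\), I claim \(u^\circ\) is not strictly between \(w^\circ\) and \(u\) unless equal heights force \(u^\circ = w^\circ\), up to the enumeration tie-break. In either case the interval \((u^\circ,u]\) contains a progressing node by the cyclic proof condition, and this node lies in \((w^\circ,u]\).

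If \(u^\circ\) lies below \(w^\circ\), then \(\hg(u^\circ)>\hg(w^\circ)\), contradicting the index ordering. The remaining worry is \(u^\circ\) strictly above \(w^\circ\) with smaller height. In that case \(x_u\) is substituted first, so its occurrence in \(\kappa_{w^\circ}\) need not be modalized in the required sense. I expect this bookkeeping to be the main obstacle. If it fails as literally stated, I would fall back to an interval-based argument: using that \(w^\circ\) has equal height to itself, order equal heights so that \(w\) precedes \(u\), possibly reversing the enumeration so that only deeper-companion variables count as earlier. A progressing node means a \(\nec\) or \(\pos\) is wrapped around the occurrence, so \(x_u\) is modalized.

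With all conditions verified, Theorem~\ref{positive-modalized-equation-systems-solvable}(2) yields a solution in \(\wGL{n}\).
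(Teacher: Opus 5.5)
Your polarity and depth arguments are correct and match the paper's: an induction over \(T\) gives \(\voc_-(\kappa_v)\subseteq V_-\) and \(\dep_n(\kappa_v,x_u)\subseteq\set{[i_v]_n}\), and companions are \(0\)-sequents.

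The gap is in the modalization step, which you flag as delicate and then leave unresolved with an incorrect case analysis. The variable \(x_u\) is introduced only at the leaf \(u\) and never appears in sibling subtrees, so it occurs in \(\kappa_{w^\circ}\) only if \(w^\circ<u\). Hence \(u^\circ\) and \(w^\circ\) are both ancestors of \(u\) and therefore comparable. You rightly exclude \(u^\circ<w^\circ\) using \(\hg(u^\circ)\le\hg(w^\circ)\), which leaves \(w^\circ\le u^\circ\). Your claim that \(u^\circ\) cannot lie strictly between \(w^\circ\) and \(u\) is false: nested cycles give exactly this configuration.

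The case \(w^\circ<u^\circ\), which you call ``the remaining worry'' and say ``need not be modalized'', is in fact immediate. We have \((u^\circ,u]\subseteq(w^\circ,u]\), so the progressing node guaranteed by the cyclic-proof condition is a premise of a \((\modal[+]{\wKT{n}})\) instance at some node of \([w^\circ,u)\). That instance wraps the unique occurrence of \(x_u\) in \(\nec\) or \(\pos\). The rules between \(w^\circ\) and that instance only pass preinterpolants through or combine them with \(\vee\) and \(\wedge\), so the guard survives into \(\kappa_{w^\circ}\). This is exactly the paper's case \(v^\circ\ge w^\circ\). There is also no tension with \(x_u\) being ``substituted first'': the modalized condition requires guarding precisely for the earlier unknowns, i.e.\ those with \(\hg(u^\circ)\le\hg(w^\circ)\), and that is what this shows.

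Your fallback of reversing the enumeration would actually break the argument. Ordering by decreasing companion height would require \(\kappa_{w^\circ}\) to be modalized in \(x_u\) when \(u^\circ<w^\circ\). In that case the progressing node in \((u^\circ,u]\) may lie in \((u^\circ,w^\circ]\), leaving \(x_u\) unguarded in \(\kappa_{w^\circ}\).

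So as written, the proposal does not establish that \(\mathcal{E}_T\) is modalized, and therefore does not yet license the appeal to Theorem~\ref{positive-modalized-equation-systems-solvable}(2). With the correction above, your argument becomes the paper's proof.
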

\begin{proof}
  For each node \(w\) of \(T\) let \(\kappa_w\) be the preinterpolant at \(w\).
  By induction on the height of \(w\) we can show that \(\voc_+(\kappa_w) \subseteq V_+ \union \set{x_v \mid v \in \rep(T)}\), \(\voc_-(\kappa_w) \subseteq V_-\).
  Additionally, it is trivially true that \(\dep_n(x_w) \subseteq \set{0,\ldots,n-1}\) for \(w \in \rep(T)\) (note that we do not care about the depth of the solution for uniform Lyndon interpolation).
  Then, it is clear that \(\mathcal{E}_T\) is a Kurahashi--Lyndon equational system over \((\bar{x}_T, V_+,V_-, x_w \mapsto \mathbb{Z}/\mathbb{Z}n)\). Let us show that it is modalized positive and of depth \(0\).
  It is straightforward that it is positive.

  Proof that \(\mathcal{E}_T\) is of depth \(0\).
  For a node \(w\) of \(T\) let \(\Gamma_w \Rightarrow_{[i_w]_n} \Delta_w\) be the sequent at \(w\) in \(T\).
  By induction on \(w\), we can show that for any \(v \in \rep(T)\) we have that \(\dep_n(\kappa_w, x_v) \subseteq \set{[i_w]_n}\).
  As for any \(w \in \rep(T)\), \(w^\circ\) must be a \(0\)-sequent, we have that \(\dep_n(\kappa_{w^\circ}, x_v) \subseteq \set{[0]_n}\).

  Proof that \(\mathcal{E}_T\) is modalized.
  It suffices to prove that for every \(w,v \in \rep(T)\) if \(\hg(v^\circ) \leq \hg(w^\circ)\) then \(\kappa_{w^\circ}\) is modalized in \(x_v\).
  We have the following facts, which are easy to establish:
  \begin{enumerate}
    \item for each \(v \in \rep(T)\) and node \(w\) if \(x_v\) occurs at \(\kappa_w\) then \(w \leq v\),
    \item for each node \(w\) with rule \((\modal[+]{\wGL{n}})\) we have that \(\kappa_v\) is modalzied in \(\set{x_w \mid w \in \rep(T)}\),
    \item for each \(v \in \rep(T)\) and node \(w \not \in \rep(T)\) with immediate successors \(w_0,\ldots,w_{n-1}\), if \(\kappa_{w_0}\), \ldots, \(\kappa_{w_{n-1}}\) are modalized in \(x_v\) then \(\kappa_w\) is modalized in \(x_v\).
  \end{enumerate}
  If \(\hg(v^\circ) \leq \hg(w^\circ)\), we have that either \(v^\circ \geq w^\circ\) or \(v^\circ\) and \(w^\circ\) are incomparable.
  Assume \(v^\circ\) and \(w^\circ\) are incomparable, then it must also be the case that \(v\) and \(w^\circ\) are incomparable; so \(\kappa_{w^\circ}\) is modalized in \(x_v\) as \(x_v\) does not occur in \(\kappa_{w^\circ}\).
  Finally, assume that \(v^\circ \geq w^\circ\). We know there must be a node \(u \in [v^\circ,v)\) which is annotated with \((\modal[+]{\wGL{n}})\); so \(\kappa_u\) is modalized in \(x_v\).
  Then, using the third fact, we can show  that (by induction) the preinterpolant of any node below \(u\) is modalized in \(x_v\) (since it is not a repeat node and all the preinterpolans of the immediate successors are modalized in \(x_v\), either by virtue of the node not being comprable with \(v\) or by the induction hypothesis).
  We can conclude, as \(w^\circ \leq v^\circ \leq u\), that \(\kappa_{w^\circ}\) is modalized in \(x_v\).
\end{proof}

Finally, we have all the tools to define the interpolant.

\begin{definition}[Interpolant]
  Given an interpolation template \(T\), we define the interpolant of \(T\), denoted by~\(\iota_T\), as the formula obtained by applying the substitution solving \(\mathcal{E}_T\) to the preinterpolant at the root of~$T$.
\end{definition}

Strictly speakening the definition of the interpolant is not unique.
For an interpolation template \(T\),  its interpolant~\(\iota_T\) will depend on the choice of the solution of \(T\).
However, any two interpolants will be \(\wGL{n}\)-logically equivalent, so this lack of unicity should not worry us.

Once the interpolant is defined, we need to verify that it fulfills the necessary interpolant properties.
By the definition it is straightforward that \(\voc_+(\iota_T) \subseteq V_+\) and \(\voc_-(\iota_T) \subseteq V_-\), as \(\iota_T\) is obtained by applying the solution of an equational system over \((\bar{x}_T, V_+,V_-, x_w \mapsto \set{0,\ldots,n-1})\) to a formula \(\kappa\) with \(\voc_+(\kappa) \subseteq V_+ \union \set{x_w \mid w \in \rep(T)}\) and \(\voc_-(\kappa) \subseteq V_-\) (using Lemma~\ref{substitution-and-polarity}).
The other two properties of the interpolants will be obtained thanks to the following two theorems.

\begin{lemma}\label{first-verification-interpolant-wGL}
  Let \(T\) be a template for \(\Gamma \Rightarrow \Delta\), then \(\n{\wGL{n}} \vdash \Gamma \Rightarrow \Delta, \iota_T\).
\end{lemma}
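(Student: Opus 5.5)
The plan is to prove a stronger statement for every node of the template and read off the root case. Fix a solution \((\cdot)^*\) of \(\mathcal{E}_T\), which exists by the preceding lemma, so that \(\iota_T = \kappa_\epsilon^*\). For a node \(w\) with sequent \(\Gamma_w \Rightarrow_{[i_w]_n} \Delta_w\), the claim is
\[
  \n{\wGL{n}} + \cut \vdash \Gamma_w \Rightarrow \Delta_w, \kappa_w^*,
\]
where the index on the arrow is simply forgotten. At the root \(\epsilon\) this is the lemma, once \(\cut\) is removed by Theorem~\ref{cut-elimination-wGL}. Working in \(\n{\wGL{n}}+\cut\) is convenient because the equivalences \(x_w^* \leftrightarrow \kappa_{w^\circ}^*\) from \(\mathcal{E}_T\) hold in \(\wGL{n}\). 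By the corollary summarising the previous section, the sequents \(\kappa_{w^\circ}^* \Rightarrow x_w^*\) are therefore provable in \(\n{\wGL{n}}\).

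The proof of the claim is built corecursively, following the template. The non-modal rules are immediate. For \((\ax)\) and \((\botL)\) the sequent is an axiom, with \(\kappa^* = \bot\) weakened in. For \((\emp)\) we prove \(\Rightarrow \top\). For \((\botR)\) and \((\toR)\) we apply the same rule to the corecursive proof. For \((\toL)\) we take the proofs of \(\Gamma \Rightarrow \phi, \Delta, \kappa_0^*\) and \(\psi, \Gamma \Rightarrow \Delta, \kappa_1^*\), weaken in the other disjunct, apply \((\toL)\) and then \((\veeR)\). None of these steps changes \((\cdot)^*\) except by commuting with the connectives.

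For a node \(w\) with rule \((\modal[+]{\wKT{n}})\), the preinterpolant \(\kappa_w^*\) is a conjunction, so I use \((\wedgeR)\) and prove each conjunct separately from \(\set{\nec^{j+1}\Sigma_j}_{j<n}, \Gamma' \Rightarrow \nec\Theta, \Delta'\).
\begin{itemize}
  \item A conjunct \(p \in \Gamma' \cap V_+\) is closed by \((\ax)\), and a conjunct \(\neg q\) with \(q \in \Delta' \cap V_-\) is closed by \((\negR)\) followed by \((\ax)\).
  \item For \(\nec\kappa^{\nec*}\), apply \((\modal{\wKT{n}})\) with \(\nec\kappa^{\nec*}\) as the principal formula. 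Its premise contains \(\necd^n\nec^{j}\Sigma_j\), which is a weakening of the template premise \(\necd^n\Sigma_0^s, \set{\nec^j\Sigma_j^s}_{j \geq 1} \Rightarrow \kappa^{\nec*}\). That premise is supplied corecursively, and \(\ctr\) converts the sets \(\Sigma_j^s\) back into the multisets \(\Sigma_j\).
  \item For \(\pos\kappa^{\nec*}_\phi = \neg\nec\neg\kappa^{\nec*}_\phi\), apply \((\negR)\) to move \(\nec\neg\kappa^{\nec*}_\phi\) to the left. Then apply \((\modal{\wKT{n}})\) with principal formula \(\nec\phi\), whose premise is \(\necd^n\Sigma\text{-part}, \necd^n\neg\kappa^{\nec*}_\phi \Rightarrow \phi\). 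This follows by weakening and \((\negL)\) from the corecursively obtained \(\ldots \Rightarrow \phi, \kappa^{\nec*}_\phi\).
\end{itemize}
The essential design point is that every corecursive call out of a modal node passes through a premise of \((\modal{\wKT{n}})\), so it is a progressing node.

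At a repeat node \(w\) we need \(\Gamma_w \Rightarrow \Delta_w, x_w^*\). Since \(\Gamma_w \Rightarrow \Delta_w\) is the sequent at \(w^\circ\), I cut the corecursively produced proof of \(\Gamma_w \Rightarrow \Delta_w, \kappa_{w^\circ}^*\), suitably weakened, against a weakened proof of \(\kappa_{w^\circ}^* \Rightarrow x_w^*\).

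The step I expect to be the main obstacle is checking that this corecursion yields a genuine proof rather than only a preproof. Every infinite branch of the resulting tree either stays eventually inside one of the finitely many auxiliary subproofs (the equivalence proofs and the \(\Ax\) instances), which are proofs already, or it follows infinitely many template edges. In the second case it traverses infinitely many back edges \(w \mapsto w^\circ\). Because the template is a cyclic proof, each interval \((w^\circ, w]\) contains a progressing node of \((\modal[+]{\wKT{n}})\), and the construction above turns that node into a progressing premise of \((\modal{\wKT{n}})\). Hence the branch progresses infinitely often. Formally I would define the map by corecursion on pairs consisting of a node and a continuation, with measure the distance to the next modal node. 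The cuts are inserted only finitely often between consecutive modal steps, so the eliminability of \(\cut\) (local admissibility via Theorem~\ref{local-adm-and-eliminability}) removes them and gives \(\n{\wGL{n}} \vdash \Gamma \Rightarrow \Delta, \iota_T\).
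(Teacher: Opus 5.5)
Your proposal is correct and follows the paper's proof essentially step for step. It uses the same corecursive translation of template nodes (the paper works in $\n{\wGL{n}}+\cut+\wk$, with $\wk$ as an explicit rule), a cut against $x_w^*\leftrightarrow\kappa^*_{w^\circ}$ at repeat nodes, the same conjunct-by-conjunct treatment of $(\modal[+]{\wKT{n}})$ nodes, and elimination of the auxiliary rules at the end.

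Two small points. First, passing from the sets $\Sigma_j^s$ to the multisets $\Sigma_j$ needs weakening, not contraction; alternatively, put the extra copies in the weakening part of $(\modal{\wKT{n}})$. Second, the paper certifies progress with the measure $\omega|\Gamma_w\Rightarrow\Delta_w|+\length(w)$, which strictly drops at every non-modal corecursive call. This avoids your slightly too quick claim that each traversed back edge contributes its own progressing node, which does not follow directly since consecutive back edges need not span a whole interval $(w^\circ,w]$.
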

\begin{proof}
  Given a node \(w\) of \(T\) we will write \(\Gamma_w \Rightarrow_{[i_w]_n} \Delta_w\) to mean the sequent at \(w\) and \(\kappa_w\) the preinterpolant at \(w\).
  Let \((\cdot)^*\) be a solution of \(\mathcal{E}_T\).
  We will define a function \(\alpha\) such that given a node \(w\) of \(T\), \(\alpha(w)\) is a proof of \(\Gamma_w \Rightarrow \Delta_w, \kappa^*_w\) in \(\n{\wGL{n}} + \cut + \wk\).
  Then we can use the eliminability of \((\wk)\) in \(\n{\wGL{n}} + \cut\) and the eliminability of \((\cut)\) in \(\n{\wGL{n}}\) to obtain the desired proof.

  We will define \(\alpha\) via corecursion.
  Once \(\alpha\) is defined it will be clear by construction that \(\alpha(w)\) is a preproof.
  We will then argue that it is, in addition, a proof.
  
  First, assume that \(w \in \rep(T)\).
  The function is defined as
  \[
    \AxiomC{\(\)}
    \RightLabel{\(\rep\)}
    \UnaryInfC{\(x_w : \Gamma_w \Rightarrow_{[0]_n} \Delta_w\)}
    \DisplayProof
    \quad
    \overset{\alpha}{\longmapsto}
    \quad
    \AxiomC{\(\alpha(w^\circ)\)}
    \noLine
    \UnaryInfC{\(\Gamma_w \Rightarrow \Delta_w, \kappa_{w^\circ}^*\)}
    \RightLabel{\(\wk\)}
    \UnaryInfC{\(\Gamma_w \Rightarrow \Delta_w, x^*_w, \kappa_{w^\circ}^*\)}
    \AxiomC{\(\tau\)}
    \noLine
    \UnaryInfC{\(\kappa_{w^\circ}^*, \Gamma_w \Rightarrow \Delta_w, x^*_w\)}
    \RightLabel{\(\cut\)}
    \BinaryInfC{\(\Gamma_w \Rightarrow \Delta_w, x^*_w\)}
    \DisplayProof
  \]
  where \(\tau\) is a proof of \(\kappa_{w^\circ}^*, \Gamma_w \Rightarrow \Delta_w, x^*_w\) in \(\n{\wGL{n}}\) which exists since \(\wGL{n} \vdash x^*_w \leftrightarrow \kappa^*_{w^\circ}\).

  Now let \(R\) be the rule at \(w\).
  Below you can find the definiton if \(R\) is one of \((\ax)\), \((\emp)\), \((\botL)\), \((\botR)\), \((\toL)\), \((\toR)\).
  \[
    \AxiomC{\(\)}
    \RightLabel{\(\ax\)}
    \UnaryInfC{\(\bot : p, \Gamma \Rightarrow_{[i]_n} p, \Delta\)}
    \DisplayProof
    \quad
    \overset{\alpha}{\longmapsto}
    \quad
    \AxiomC{}
    \RightLabel{\(\ax\)}
    \UnaryInfC{\(p, \Gamma \Rightarrow p, \Delta, \bot\)}
    \DisplayProof
  \]

  \[
    \AxiomC{\(\)}
    \RightLabel{\(\emp\)}
    \UnaryInfC{\(\top :  {\Rightarrow_{[i]_n} }\)}
    \DisplayProof
    \quad
    \overset{\alpha}{\longmapsto}
    \quad
    \AxiomC{}
    \RightLabel{\(\botL\)}
    \UnaryInfC{\(\bot \Rightarrow \bot\)}
    \RightLabel{\(\toR\)}
    \UnaryInfC{\( \Rightarrow \top\)}
    \DisplayProof
  \]

  \[
    \AxiomC{\(\)}
    \RightLabel{\(\botL\)}
    \UnaryInfC{\(\bot : \bot, \Gamma \Rightarrow_{[i]_n} \Delta\)}
    \DisplayProof
    \quad
    \overset{\alpha}{\longmapsto}
    \quad
    \AxiomC{}
    \RightLabel{\(\botL\)}
    \UnaryInfC{\(\bot, \Gamma \Rightarrow \Delta, \bot\)}
    \DisplayProof
  \]

  \[
    \AxiomC{\(w_0\)}
    \noLine
    \UnaryInfC{\(\kappa :  \Gamma \Rightarrow_{[i]_n} \Delta\)}
    \RightLabel{\(\botR\)}
    \UnaryInfC{\(\kappa : \Gamma \Rightarrow_{[i]_n} \Delta, \bot\)}
    \DisplayProof
    \quad
    \overset{\alpha}{\longmapsto}
    \quad
    \AxiomC{\(\alpha(w_0)\)}
    \noLine
    \UnaryInfC{\(\Gamma \Rightarrow \Delta, \kappa^*\)}
    \RightLabel{\(\botR\)}
    \UnaryInfC{\(\Gamma \Rightarrow \Delta, \bot, \kappa^*\)}
    \DisplayProof
  \]

  \[
    \AxiomC{\(w_0\)}
    \noLine
    \UnaryInfC{\(\kappa_0 :  \Gamma \Rightarrow_{[i]_n} \phi, \Delta\)}
    \AxiomC{\(w_0\)}
    \noLine
    \UnaryInfC{\(\kappa_1 :  \psi, \Gamma \Rightarrow_{[i]_n}  \Delta\)}
    \RightLabel{\(\toL\)}
    \BinaryInfC{\(\kappa_0 \vee \kappa_1 : \phi \to \psi, \Gamma \Rightarrow_{[i]_n} \Delta\)}
    \DisplayProof
    \quad
    \overset{\alpha}{\longmapsto}
    \quad
    \AxiomC{\(\alpha(w_0)\)}
    \noLine
    \UnaryInfC{\(\Gamma \Rightarrow \phi, \Delta, \kappa^*_0\)}
    \RightLabel{\(\wk\)}
    \UnaryInfC{\(\Gamma \Rightarrow \phi, \Delta, \kappa^*_0, \kappa^*_1\)}
    \RightLabel{\(\veeR\)}
    \UnaryInfC{\(\Gamma \Rightarrow \phi, \Delta, \kappa^*_0 \vee \kappa^*_1\)}
    \AxiomC{\(\alpha(w_1)\)}
    \noLine
    \UnaryInfC{\(\psi,\Gamma \Rightarrow  \Delta, \kappa^*_1\)}
    \RightLabel{\(\wk\)}
    \UnaryInfC{\(\psi, \Gamma \Rightarrow \Delta, \kappa^*_0, \kappa^*_1\)}
    \RightLabel{\(\veeR\)}
    \UnaryInfC{\(\psi, \Gamma \Rightarrow \Delta, \kappa^*_0 \vee \kappa^*_1\)}
    \RightLabel{\(\toL\)}
    \BinaryInfC{\(\phi \to \psi, \Gamma \Rightarrow \Delta,\kappa_0^* \vee \kappa_1^*\)}
    \DisplayProof
  \]

  \[
    \AxiomC{\(w_0\)}
    \noLine
    \UnaryInfC{\(\kappa :  \phi, \Gamma \Rightarrow_{[i]_n} \psi, \Delta\)}
    \RightLabel{\(\toR\)}
    \UnaryInfC{\(\kappa : \Gamma \Rightarrow_{[i]_n} \phi \to \psi, \Delta\)}
    \DisplayProof
    \quad
    \overset{\alpha}{\longmapsto}
    \quad
    \AxiomC{\(\alpha(w_0)\)}
    \noLine
    \UnaryInfC{\(\phi, \Gamma \Rightarrow \psi, \Delta, \kappa^*\)}
    \RightLabel{\(\toR\)}
    \UnaryInfC{\(\Gamma \Rightarrow \phi \to \psi, \Delta, \kappa^*\)}
    \DisplayProof
  \]

  Finally assume that \(R = (\modal[+]{\wKT{n}})\).
  In this case \(w\) is of shape
  \[
    \AxiomC{\(\begin{matrix} w^{\nec} \\ \kappa^{\nec} : \necd^n \Sigma_0, \set{\nec^{i} \Sigma_i}_{1 \leq i < n} \Rightarrow_{[i]_n} \end{matrix}\)}
    \AxiomC{\(\left[\begin{matrix} w^{\nec}_\phi \\ \kappa^{\nec}_\phi : \necd^n \Sigma_0, \set{\nec^{i} \Sigma_i}_{1 \leq i < n} \Rightarrow_{[i]_n} \phi\end{matrix}\right]_{\phi \in \Theta}\)}
    \RightLabel{\(\modal[+]{\wKT{n}}\)}
    \BinaryInfC{\(\nec \kappa^{\nec} \wedge \bigwedge_{\phi \in \Theta} \pos \kappa^{\nec}_\phi \wedge \bigwedge (\Gamma \inter V_+) \wedge \bigwedge \neg (\Delta \inter V_-) : \set{\nec^{i+1} \Sigma_i}_{i < n}, \Gamma \Rightarrow_{[i+1]_n} \nec \Theta, \Delta\)}
    \DisplayProof
  \]
  Since \(\kappa^*_w\) is a conjunction of formulas, it suffices to construct a preproof for each conjunct and join them with multiple applications of \((\wedgeR)\).
  \begin{itemize}
    \item Preproof for \(\nec \kappa^{\nec}\).
      The desired preproof is
      \[
        \AxiomC{\(\alpha(w^{\nec})\)}
        \noLine
        \UnaryInfC{\(\necd^n \Sigma_0, \set{\nec^{i} \Sigma_i}_{1 \leq i < n} \Rightarrow (\kappa^{\nec})^*\)}
        \RightLabel{\(\wk\)}
        \UnaryInfC{\(\set{\necd^n \nec^{i} \Sigma_i}_{i < n} \Rightarrow (\kappa^{\nec})^*\)}
        \RightLabel{\(\modal[n]{\KT}\)}
        \UnaryInfC{\(\set{\nec^{i+1} \Sigma_i}_{i < n}, \Gamma \Rightarrow \nec \Theta, \Delta, \nec (\kappa^{\nec})^*\)}
        \DisplayProof
      \]
    \item Preproof for \(\pos \kappa^{\nec}_{\phi}\) for \(\phi \in \Theta\).
      The desired preproof is
      \[
        \AxiomC{\(\alpha(w^{\nec}_\phi)\)}
        \noLine
        \UnaryInfC{\(\necd^n \Sigma_0, \set{ \nec^{i} \Sigma_i}_{1 \leq i < n}\Rightarrow \phi, (\kappa^{\nec}_\phi)^*\)}
        \RightLabel{\(\negL\)}
        \UnaryInfC{\(\neg (\kappa^{\nec}_\phi)^*, \necd^n \Sigma_0, \set{ \nec^{i} \Sigma_i}_{1 \leq i < n}\Rightarrow \phi\)}
        \RightLabel{\(\wk\)}
        \UnaryInfC{\(\necd^n \neg (\kappa^{\nec}_\phi)^*,\set{\necd^n \nec^{i} \Sigma_i}_{i < n}\Rightarrow \phi\)}
        \RightLabel{\(\modal[n]{\wKT{n}}\)}
        \UnaryInfC{\(\nec \neg (\kappa^{\nec}_\phi)^*,\set{\nec^{i+1} \Sigma_i}_{i < n}, \Gamma \Rightarrow \nec \Theta, \Delta\)}
        \RightLabel{\(\negR\)}
        \UnaryInfC{\(\set{\nec^{i+1} \Sigma_i}_{i < n}, \Gamma \Rightarrow \nec \Theta, \Delta, \pos (\kappa^{\nec}_\phi)^*\)}
        \DisplayProof
      \]
    \item Preproof for \(p\) where \(p \in \Gamma\).
      We note that \(p^* = p\), so the desired preproof is
      \[
        \AxiomC{}
        \RightLabel{\(\ax\)}
        \UnaryInfC{\(\set{\nec^{i+1} \Sigma_i}_{i < n}, \Gamma \Rightarrow \nec \Theta, \Delta, p\)}
        \DisplayProof
      \]
    \item Preproof for \(\neg p\) where \(p \in \Delta\).
      We note that \(p^* = p\), so the desired preproof is
      \[
        \AxiomC{}
        \RightLabel{\(\ax\)}
        \UnaryInfC{\(p, \set{\nec^{i+1} \Sigma_i}_{i < n}, \Gamma \Rightarrow \nec \Theta, \Delta\)}
        \RightLabel{\(\negR\)}
        \UnaryInfC{\( \set{\nec^{i+1} \Sigma_i}_{i < n}, \Gamma \Rightarrow \nec \Theta, \Delta, \neg p\)}
        \DisplayProof
      \]
  \end{itemize}

  To show that \(\alpha(w)\) is always a proof, we assign a measure \(\omega |\Gamma_w \Rightarrow_{[i_w]_n} \Delta_w| + \length(w)\) to every node \(w\) of~\(T\), where \(|\Gamma_w \Rightarrow_{[i_w]_n} \Delta_w|\) is the sum of the complexity of the formulas in \(\Gamma_w \Rightarrow_{[i_w]_n} \Delta_w\) and \(\length(w)\) is the length of \(w\) as a sequence.
  We notice that in the corecursive calls of \(\alpha\), this measure strictly decrease in all cases except when \(w\) has a rule \((\modal[+]{\wKT{n}})\).
  However, in the produced preproof in this case,  there is an application of \((\modal{\wKT{n}})\) in the path from the corecursive call to the root. 
  This implies that in any infinite branch of \(\alpha(w)\), there will be infinitely many applications of \((\modal{\wKT{n}})\), as desired.
\end{proof}

\begin{lemma}\label{second-verification-interpolant-wGL}
  Let \(T\) be a template for \(\Gamma \Rightarrow \Delta\).
  For any \(\Xi \Rightarrow \Lambda\) such that \(\voc_+(\Xi \Rightarrow \Lambda) \subseteq V_+\) and \(\voc_-(\Xi \Rightarrow \Lambda) \subseteq V_-\), we have that \(\nmod{\wGL{n}} \vdash \Gamma, \Xi \Rightarrow \Delta, \Lambda\) implies \(\n{\wGL{n}} \vdash \iota_T, \Xi \Rightarrow \Lambda\).
\end{lemma}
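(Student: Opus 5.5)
We prove a stronger statement node by node in the template, by corecursion. Fix a solution \((\cdot)^*\) of \(\mathcal{E}_T\). For every node \(w\) of \(T\) with sequent \(\Gamma_w \Rightarrow_{[i_w]_n} \Delta_w\), and every \(\Xi \Rightarrow \Lambda\) with \(\voc_+(\Xi\Rightarrow\Lambda)\subseteq V_+\) and \(\voc_-(\Xi\Rightarrow\Lambda)\subseteq V_-\), we show: from a proof \(\pi \vdash \Gamma_w, \Xi \Rightarrow \Delta_w, \Lambda\) in \(\nmod{\wGL{n}}\) we can build a proof of \(\kappa^*_w, \Xi \Rightarrow \Lambda\) in \(\n{\wGL{n}} + \cut + \wk\). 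Applying this at the root and then eliminating \(\wk\) and \(\cut\) gives the lemma.

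The map \(\beta(w,\pi,\Xi\Rightarrow\Lambda)\) is defined by corecursion, with an inner induction on \(\lhg(\pi)\) and on the template structure. First we put \(\pi\) into a normal form: using invertibility and contraction (preserving local height), every non-atomic formula of \(\Gamma_w,\Delta_w\) is decomposed exactly as the template decomposes it. We then follow the rule at \(w\).

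\begin{itemize}
\item \emph{Repeat node.} We pass to \(w^\circ\), which has the same sequent. A proof of \(\kappa^*_{w^\circ},\Xi\Rightarrow\Lambda\) gives one of \(x^*_w,\Xi\Rightarrow\Lambda\) by a cut with \(x_w^* \leftrightarrow \kappa_{w^\circ}^*\).
\item \emph{\((\toL)\) node.} We invert \(\pi\) on the template's principal formula, recurse on both premises, and combine with \((\veeL)\).
\item \emph{\((\toR)\) and \((\botR)\) nodes.} We invert and recurse.
\item \emph{\((\ax)\), \((\botL)\) nodes.} \(\kappa=\bot\), so \((\botL)\) closes the sequent.
\item \emph{\((\emp)\) node.} Here \(\Gamma_w,\Delta_w\) is empty, and \(\pi\) itself, weakened with \(\top\), works.
\item \emph{Modal node.} Here \(\Gamma_w=\set{\nec^{j+1}\Sigma_j}_{j<n},\Gamma'\) and \(\Delta_w = \nec\Theta,\Delta'\) with \(\Gamma',\Delta'\) atomic. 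After inverting the propositional structure of \(\Xi,\Lambda\) (by an auxiliary induction on \(|\Xi\Rightarrow\Lambda|\), using \(\veeL\) and \(\wedgeR\) on the conjunctive interpolant), the last rule of \(\pi\) is \((\ax)\), \((\botL)\) or \((\modalalt{\wKT{n}})\).
\end{itemize}

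The atomic case at a modal node splits three ways. If the axiom is on \(p\in\Gamma'\cap\Lambda\), then \(p\) occurs positively on the right of \(\Xi\Rightarrow\Lambda\), so \(p\in V_+\). Hence \(p\) is a conjunct of \(\kappa_w\) and we conclude with \(\wedgeL\) and \(\ax\). The case \(p\in\Xi\cap\Delta'\) is symmetric, using \(\neg p\) with \(p\in V_-\). The case \(p\) in both \(\Gamma'\) and \(\Delta'\) is excluded by the template's side condition, and \(p\in \Xi \cap \Lambda\) is trivial.

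The modal case is \((\modalalt{\wKT{n}})\) with principal formula \(\nec\chi\). Its boxed context splits into a template part, contained in \(\necd^n\Sigma_0,\set{\nec^j\Sigma_j}\) (after contraction to \(\Sigma^s\)), and a part \(\Xi'\) coming from \(\Xi\).
\begin{itemize}
\item If \(\nec\chi\in\nec\Theta\), say \(\chi=\phi\), then \(\pi_0\) proves \(\necd^n\Sigma^s_0,\ldots,\Xi'\Rightarrow\phi\). The template premise \(w^{\nec}_\phi\) has right side \(\phi\), so the corecursive call with context \(\Xi'\Rightarrow\) gives \(\kappa^{\nec*}_\phi,\Xi'\Rightarrow\). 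A \((\modal{\wKT{n}})\) step then yields \(\nec\neg\kappa^{\nec*}_\phi\), hence \(\pos\kappa^{\nec*}_\phi\), on the left.
\item If \(\nec\chi\in\Lambda\), we use \(w^{\nec}\) with context \(\Xi'\Rightarrow\chi\) and the conjunct \(\nec\kappa^{\nec}\).
\end{itemize}
The new context \(\Xi'\Rightarrow\chi\) still satisfies the vocabulary restrictions, since it is built from subformulas of \(\Xi,\Lambda\) under boxes, which preserve polarity. The progress is justified because a \((\modal{\wKT{n}})\) sits between the corecursive call and the root. This lets \(\nmod{}\)-proofs be turned into \(\n{}\)-proofs, which also absorbs the mismatch between \(\necd^n\) weakening and the \(\Sigma^s\) contractions.

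\textbf{Main obstacle.} The hard part is making the inversion/normalisation step interact correctly with the modal rule and the \([i]_n\)-annotations. Inverting \(\pi\) must keep it within \(\nmod{\wGL{n}}\) without raising its local height. Each infinite branch of the result must meet infinitely many modal steps. This follows from the measure \(\omega\cdot|\Gamma_w\Rightarrow\Delta_w| + \lhg(\pi)\), together with the fact that repeat jumps pass through progress in \(T\).
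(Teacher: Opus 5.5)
Your proposal is correct and follows essentially the same route as the paper. It uses the same corecursive map $\beta(w,\pi)$ yielding $\kappa^*_w,\Xi\Rightarrow\Lambda$ in $\n{\wGL{n}}+\cut+\wk$, the same cut against $x^*_w\leftrightarrow\kappa^*_{w^\circ}$ at repeat nodes, and invertibility at propositional template nodes. At modal nodes it uses the conjuncts $\nec\kappa^{\nec}$ and $\pos\kappa^{\nec}_\phi$ and the same polarity argument for the atomic conjuncts.

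The differences are bookkeeping only. At modal nodes you pre-invert the propositional structure of $\Xi,\Lambda$, where the paper follows the last rule of $\pi$ under induction on $\lhg(\pi)$. Your measure also lacks the paper's $\omega\cdot\length(w)$ term, which is what makes the jump from a repeat node $w$ to $w^\circ$ strictly decreasing, so you genuinely need, as you indicate, the cycle condition of $T$ to rule out progress-free chains of repeat jumps.
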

\begin{proof}
  Given a node \(w\) of \(T\), we write \(\Gamma_w \Rightarrow_{[i_w]_n} \Delta_w\) to denote the sequent at \(w\) and \(\kappa_w\) for the preinterpolant at \(w\).
  Let \((\cdot)^*\) be a solution of \(\mathcal{E}_T\).
  We will define a function \(\beta\) such that, given a node \(w\) of \(T\) and a proof \(\pi \vdash \Gamma_w, \Xi \Rightarrow \Delta_w, \Lambda\) in \(\nmod{\wGL{n}}\) with \(\voc_b(\Xi \Rightarrow \Lambda) \subseteq V_b\) for \(b \in \set{+,-}\),  \(\beta(w, \pi)\) is a proof of \(\kappa^*_w, \Xi \Rightarrow \Lambda\) in \(\n{\wGL{n}} + \cut + \wk\).
  Then we can use the eliminability of \((\wk)\) in \(\n{\wGL{n}} + \cut\) and the eliminability of \((\cut)\) in \(\n{\wGL{n}}\) to obtain the desired proof.

  We will first define \(\beta\) via corecursion. It will be clear by the construction that \(\beta(w,\pi)\) is a preproof.
  Then we will argue that it is a proof.
  
  First, assume that \(w \in \rep(T)\).
  Then the function \(\beta\) is defined as
  \[
    \left(
    \AxiomC{}
    \RightLabel{\(\rep\)}
    \UnaryInfC{\(x_w : \Gamma_w \Rightarrow_{[0]_n} \Delta_w\)}
    \DisplayProof,
    \quad
    \AxiomC{\(\pi\)}
    \noLine
    \UnaryInfC{\(\Gamma_w, \Xi \Rightarrow \Delta_w, \Lambda\)}
    \DisplayProof
    \right)
    \quad \overset{\beta}{\longmapsto} \quad
    \AxiomC{\(\tau\)}
    \noLine
    \UnaryInfC{\( x^*_w, \Xi \Rightarrow \Lambda, \kappa^*_{w^\circ}\)}
    \AxiomC{\(\beta(w^\circ,\pi)\)}
    \noLine
    \UnaryInfC{\(\kappa^*_{w^\circ}, \Xi \Rightarrow \Lambda\)}
    \RightLabel{\(\wk\)}
    \UnaryInfC{\(\kappa^*_{w^\circ}, x^*_w, \Xi \Rightarrow \Lambda\)}
    \RightLabel{\(\cut\)}
    \BinaryInfC{\(x^*_w, \Xi \Rightarrow \Lambda\)}
    \DisplayProof
  \]
  where \(\tau\) is a proof of \( x^*_w, \Xi \Rightarrow \Lambda, \kappa^*_{w^\circ}\) in \(\n{\wGL{n}}\), which exists since \(\wGL{n} \vdash x^*_w \leftrightarrow \kappa^*_{w^\circ}\).

  Now let \(R\) be the rule at \(w\).
  Below there is the defintion if \(\beta(w,\pi)\) is \(R\) is one of \((\ax)\), \((\emp)\), \((\botL)\), \((\botR)\), \((\toL)\) or \((\toR)\).
  \[
    \left(
      \AxiomC{}
      \RightLabel{\(\ax\)}
      \UnaryInfC{\(\bot : p, \Gamma \Rightarrow_{[i]_n} p, \Delta\)}
      \DisplayProof,
      \quad
      \AxiomC{\(\pi\)}
      \noLine
      \UnaryInfC{\(p, \Gamma, \Xi \Rightarrow p, \Delta, \Lambda\)}
      \DisplayProof
    \right)
    \quad
    \overset{\beta}{\longmapsto}
    \quad
    \AxiomC{\(\)}
    \RightLabel{\(\botL\)}
    \UnaryInfC{\(\bot, \Xi \Rightarrow \Delta\)}
    \DisplayProof
  \] 

  \[
    \left(
      \AxiomC{}
      \RightLabel{\(\emp\)}
      \UnaryInfC{\(\top : {\Rightarrow_{[i]_n}}\)}
      \DisplayProof,
      \quad
      \AxiomC{\(\pi\)}
      \noLine
      \UnaryInfC{\(\Xi \Rightarrow \Lambda\)}
      \DisplayProof
    \right)
    \quad
    \overset{\beta}{\longmapsto}
    \quad
    \AxiomC{\(\pi\)}
    \noLine
    \UnaryInfC{\(\Xi \Rightarrow \Lambda\)}
    \RightLabel{\(\wk\)}
    \UnaryInfC{\(\top, \Xi \Rightarrow \Delta\)}
    \DisplayProof
  \]

  \[
    \left(
      \AxiomC{}
      \RightLabel{\(\botL\)}
      \UnaryInfC{\(\bot : \bot, \Gamma \Rightarrow_{[i]_n} \Delta\)}
      \DisplayProof,
      \quad
      \AxiomC{\(\pi\)}
      \noLine
      \UnaryInfC{\(\bot, \Gamma, \Xi \Rightarrow \Delta, \Lambda\)}
      \DisplayProof
    \right)
    \quad
    \overset{\beta}{\longmapsto}
    \quad
    \AxiomC{\(\)}
    \RightLabel{\(\botL\)}
    \UnaryInfC{\(\bot, \Xi \Rightarrow \Delta\)}
    \DisplayProof
  \]

  \[
    \left(
      \AxiomC{\(w_0\)}
      \noLine
      \UnaryInfC{\(\kappa : \Gamma \Rightarrow_{[i]_n} \Delta\)}
      \RightLabel{\(\botR\)}
      \UnaryInfC{\(\kappa : \Gamma \Rightarrow_{[i]_n} \bot, \Delta\)}
      \DisplayProof,
      \quad
      \AxiomC{\(\pi\)}
      \noLine
      \UnaryInfC{\( \Gamma, \Xi \Rightarrow \bot,\Delta, \Lambda\)}
      \DisplayProof
    \right)
    \quad
    \overset{\beta}{\longmapsto}
    \quad
    \AxiomC{\(\beta(w_0, \inv_{\botR}(\pi))\)}
    \noLine
    \UnaryInfC{\(\kappa^*, \Xi \Rightarrow \Delta\)}
    \RightLabel{\(\wk\)}
    \UnaryInfC{\(\kappa^*, \Xi \Rightarrow \Delta\)}
    \DisplayProof
  \]

  \begin{multline*}
    \left(
      \AxiomC{\(w_0\)}
      \noLine
      \UnaryInfC{\(\kappa_0 : \Gamma \Rightarrow_{[i]_n} \phi, \Delta\)}
      \AxiomC{\(w_0\)}
      \noLine
      \UnaryInfC{\(\kappa_1 : \psi, \Gamma \Rightarrow_{[i]_n} \Delta\)}
      \RightLabel{\(\toL\)}
      \BinaryInfC{\(\kappa_0 \vee \kappa_1 :  \phi \to \psi, \Gamma \Rightarrow_{[i]_n} \Delta\)}
      \DisplayProof,
      \quad
      \AxiomC{\(\pi\)}
      \noLine
      \UnaryInfC{\( \phi \to \psi,\Gamma, \Xi \Rightarrow \Delta, \Lambda\)}
      \DisplayProof
    \right)
    \quad
    \overset{\beta}{\longmapsto}\\
    \AxiomC{\(\beta(w_0, \inv_{\toL_0}(\pi))\)}
    \noLine
    \UnaryInfC{\(\kappa^*_0, \Xi \Rightarrow \Delta\)}
    \AxiomC{\(\beta(w_1, \inv_{\toL_1}(\pi))\)}
    \noLine
    \UnaryInfC{\(\kappa^*_1, \Xi \Rightarrow \Delta\)}
    \RightLabel{\(\veeL\)}
    \BinaryInfC{\(\kappa^*_0 \vee \kappa^*_1, \Xi \Rightarrow \Delta\)}
    \DisplayProof
  \end{multline*}

  \[
    \left(
      \AxiomC{\(w_0\)}
      \noLine
      \UnaryInfC{\(\kappa : \phi, \Gamma \Rightarrow_{[i]_n} \psi, \Delta\)}
      \RightLabel{\(\toR\)}
      \UnaryInfC{\(\kappa : \Gamma \Rightarrow_{[i]_n} \phi \to \psi, \Delta\)}
      \DisplayProof,
      \quad
      \AxiomC{\(\pi\)}
      \noLine
      \UnaryInfC{\( \Gamma, \Xi \Rightarrow \phi \to \psi,\Delta, \Lambda\)}
      \DisplayProof
    \right)
    \quad
    \overset{\beta}{\longmapsto}
    \quad
    \AxiomC{\(\beta(w_0, \inv_{\toR}(\pi))\)}
    \noLine
    \UnaryInfC{\(\kappa^*, \Xi \Rightarrow \Delta\)}
    \RightLabel{\(\wk\)}
    \UnaryInfC{\(\kappa^*, \Xi \Rightarrow \Delta\)}
    \DisplayProof
  \]

  Finally, assume that \(R = (\modal[+]{\wGL{n}})\).
  In this case \(w\) is of shape
  \[
    \AxiomC{\(\begin{matrix} w^{\nec} \\ \kappa^{\nec} : \necd^n \Sigma_0, \set{\nec^{i} \Sigma_i}_{1 \leq i < n} \Rightarrow_{[i]_n} \end{matrix}\)}
    \AxiomC{\(\left[\begin{matrix} w^{\nec}_\phi \\ \kappa^{\nec}_\phi : \necd^n \Sigma_0, \set{\nec^{i} \Sigma_i}_{1 \leq i < n} \Rightarrow_{[i]_n} \phi\end{matrix}\right]_{\phi \in \Theta}\)}
    \RightLabel{\(\modal[+]{\wKT{n}}\)}
    \BinaryInfC{\(\nec \kappa^{\nec} \wedge \bigwedge_{\phi \in \Theta} \pos \kappa^{\nec}_\phi \wedge \bigwedge (\Gamma \inter V_+) \wedge \bigwedge \neg (\Delta \inter V_-) : \set{\nec^{i+1} \Sigma_i}_{i < n}, \Gamma \Rightarrow_{[i+1]_n} \nec \Theta, \Delta\)}
    \DisplayProof
  \]
  where \(\Gamma_w = \set{\nec^{i+1} \Sigma_i}_{i < n}, \Gamma\), \(\Delta_w = \nec \Theta, \Delta\) and \(\Gamma,\Delta \subseteq \text{Var}\), \(\Gamma \inter \Delta = \varnothing\).
  We proceed by a further subcase analysis on the last rule applied to \(\pi\).
  \begin{itemize}
    \item Last rule of \(\pi\) is \((\ax)\).
      Then there is a variable \(p\) occurring to the left side and to the right side of the conclusion of \(\pi\).
      We make cases in the possible multisets \(p\) may belong to.
      \begin{itemize}
        \item Case \(p \in \Gamma \cap \Delta\). This case is impossible since \(\Gamma \cap \Delta = \varnothing\).
        \item Case \(p \in \Gamma \cap \Lambda\).
          Since \(p \in \Lambda\) and \(\voc_+(\Xi \Rightarrow \Lambda) \subseteq V_+\) we have that \(p \in \Gamma \cap V_+\), so \(p\) is a conjunct of \(\kappa^*_w\).
          Then \(\beta(w,\pi)\) is defined as
          \[
            \AxiomC{}
            \RightLabel{\(\ax\)}
            \UnaryInfC{\(p, \Xi \Rightarrow \Lambda\)}
            \RightLabel{\(\wk + \wedgeL\)}
            \doubleLine
            \UnaryInfC{\(\kappa^*_w, \Xi \Rightarrow \Lambda\)}
            \DisplayProof
          \]
        \item Case \(p \in \Xi \cap \Delta\).
          Since \(p \in \Xi\) and \(\voc_-(\Xi \Rightarrow \Lambda) \subseteq V_-\) we have that \(p \in \Delta \cap V_-\), so \(\neg p\) is a conjunct of \(\kappa^*_w\).
          Then \(\beta(w,\pi)\) is defined as
          \[
            \AxiomC{}
            \RightLabel{\(\ax\)}
            \UnaryInfC{\(\Xi \Rightarrow p, \Lambda\)}
            \RightLabel{\(\negL\)}
            \UnaryInfC{\( \neg p, \Xi \Rightarrow\Lambda\)}
            \RightLabel{\(\wk + \wedgeL\)}
            \doubleLine
            \UnaryInfC{\(\kappa^*_w, \Xi \Rightarrow \Lambda\)}
            \DisplayProof
          \]
        \item Case \(p \in \Xi \cap \Lambda\).
          In this case \(\beta(w,\pi)\) is defined as
          \[
            \AxiomC{}
            \RightLabel{\(\ax\)}
            \UnaryInfC{\(\kappa^*_w, \Xi \Rightarrow \Lambda\)}
            \DisplayProof
          \]
      \end{itemize}
    \item Last rule of \(\pi\) is \(\botL\).
      Since \(\bot \not \in \Gamma\) it must be the case that \(\bot \in \Xi\).
      Then \(\beta(w,\pi)\) is defined as
      \[
        \AxiomC{\(\)}
        \RightLabel{\(\botL\)}
        \UnaryInfC{\(\kappa^*_w, \Xi \Rightarrow \Lambda\)}
        \DisplayProof
      \]
    \item Last rule of \(\pi\) is \((\botR)\).
      As \(\Delta \subseteq \var\) it must be the case that \(\bot \in \Lambda\).
      Then \(\beta\) is defined as
      \[
        \left(
          w,
          \AxiomC{\(\pi_0\)}
          \noLine
          \UnaryInfC{\(\set{\nec^{i+1} \Sigma_i}_{i < n}, \Gamma, \Xi \Rightarrow \nec \Theta, \Delta, \Lambda'\)}
          \RightLabel{\(\botR\)}
          \UnaryInfC{\(\set{\nec^{i+1} \Sigma_i}_{i < n}, \Gamma, \Xi \Rightarrow \nec \Theta, \Delta, \bot, \Lambda'\)}
          \DisplayProof
        \right)
        \quad
        \overset{\beta}{\longmapsto}
        \quad
        \AxiomC{\(\beta(w,\pi_0)\)}
        \noLine
        \UnaryInfC{\(\kappa^*_w, \Xi \Rightarrow \Lambda'\)}
        \RightLabel{\(\botR\)}
        \UnaryInfC{\(\kappa^*_w, \Xi \Rightarrow \bot, \Lambda'\)}
        \DisplayProof
      \]
      where \(\Lambda = \bot, \Lambda'\).

    \item Last rule of \(\pi\) is \((\toL)\).
      As \(\Gamma \subseteq \var\) it must be the case that the principal formula of \((\toL)\) is in \(\Lambda\).
      Then \(\beta\) is defined as
      \begin{multline*}
        \left(
          w,
          \AxiomC{\(\pi_0\)}
          \noLine
          \UnaryInfC{\(\set{\nec^{i+1} \Sigma_i}_{i < n}, \Gamma, \Xi' \Rightarrow \nec \Theta, \Delta, \phi, \Lambda\)}
          \AxiomC{\(\pi_1\)}
          \noLine
          \UnaryInfC{\(\set{\nec^{i+1} \Sigma_i}_{i < n}, \Gamma, \psi, \Xi' \Rightarrow \nec \Theta, \Delta, \Lambda\)}
          \RightLabel{\(\toL\)}
          \BinaryInfC{\(\set{\nec^{i+1} \Sigma_i}_{i < n}, \Gamma, \phi \to \psi,\Xi' \Rightarrow \nec \Theta, \Delta,  \Lambda\)}
          \DisplayProof
        \right)
        \quad
        \overset{\beta}{\longmapsto}\\ \\
        \AxiomC{\(\beta(w,\pi_0)\)}
        \noLine
        \UnaryInfC{\(\kappa^*_w, \Xi' \Rightarrow \phi, \Lambda\)}
        \AxiomC{\(\beta(w,\pi_1)\)}
        \noLine
        \UnaryInfC{\(\kappa^*_w, \psi, \Xi' \Rightarrow \Lambda\)}
        \RightLabel{\(\toL\)}
        \BinaryInfC{\(\kappa^*_w, \phi \to \psi, \Xi' \Rightarrow  \Lambda\)}
        \DisplayProof
      \end{multline*}
      where \(\Xi = \phi \to \psi, \Xi'\).

    \item Last rule of \(\pi\) is \((\toR)\).
      As \(\Delta \subseteq \var\) it must be the case that the principal formula of \((\toR)\) is in \(\Lambda\).
      Then \(\beta\) is defined as
      \[
        \left(
          w,
          \AxiomC{\(\pi_0\)}
          \noLine
          \UnaryInfC{\(\set{\nec^{i+1} \Sigma_i}_{i < n}, \Gamma, \phi, \Xi \Rightarrow \nec \Theta, \Delta, \psi, \Lambda'\)}
          \RightLabel{\(\toR\)}
          \UnaryInfC{\(\set{\nec^{i+1} \Sigma_i}_{i < n}, \Gamma, \Xi \Rightarrow \nec \Theta, \Delta, \phi \to \psi, \Lambda'\)}
          \DisplayProof
        \right)
        \quad
        \overset{\beta}{\longmapsto}
        \quad
        \AxiomC{\(\beta(w,\pi_0)\)}
        \noLine
        \UnaryInfC{\(\kappa^*_w, \phi, \Xi \Rightarrow \psi, \Lambda'\)}
        \RightLabel{\(\toR\)}
        \UnaryInfC{\(\kappa^*_w, \Xi \Rightarrow \phi \to \psi, \Lambda'\)}
        \DisplayProof
      \]
      where \(\Lambda = \phi \to \psi, \Lambda'\).
    \item Last rule of \(\pi\) is \((\modalalt{\wKT{n}})\).
      First, assume that the principal formula of \((\modalalt{\wKT{n}})\) in \(\pi\) is in \(\nec \Theta\).
      Then we define \(\beta\) as
      \begin{multline*}
        \left(
          w,
          \AxiomC{\(\pi_0\)}
          \noLine
          \UnaryInfC{\(\necd^n \Sigma_0, \set{\nec^i \Sigma_i}_{1 \leq i < n}, \necd^n\Xi^{\nec}_0, \set{\nec^i \Xi^{\nec}_i}_{1 \leq i < n} \Rightarrow  \phi\)}
          \RightLabel{\(\modalalt{\wKT{n}}\)}
          \UnaryInfC{\(\set{\nec^{i+1} \Sigma_i}_{i < n}, \Gamma, \Xi \Rightarrow \nec \phi, \nec \Theta', \Delta, \Lambda\)}
          \DisplayProof
        \right)
        \overset{\beta}{\longmapsto}\\
        \AxiomC{\(\beta(w^{\nec}_\phi,\pi_0)\)}
        \noLine
        \UnaryInfC{\((\kappa^{\nec}_\phi)^*, \necd^n \Xi^{\nec}_0, \set{\nec^i \Xi^{\nec}_i}_{1 \leq i < n} \Rightarrow \)}
        \RightLabel{\(\negR\)}
        \UnaryInfC{\(\necd^n \Xi^{\nec}_0, \set{\nec^i \Xi^{\nec}_i}_{1 \leq i < n} \Rightarrow \neg (\kappa^{\nec}_\phi)^*\)}
        \RightLabel{\(\wk\)}
        \UnaryInfC{\(\necd^n \Xi^{\nec}_0, \set{\necd^n\nec^i \Xi^{\nec}_i}_{1 \leq i < n} \Rightarrow \neg (\kappa^{\nec}_\phi)^*\)}
        \RightLabel{\(\modal{\wKT{n}}\)}
        \UnaryInfC{\(\Xi \Rightarrow \Lambda, \nec \neg (\kappa^{\nec}_\phi)^*\)}
        \RightLabel{\(\negL\)}
        \UnaryInfC{\(\pos (\kappa^{\nec}_\phi)^*, \Xi \Rightarrow \Lambda \)}
        \RightLabel{\(\wk + \wedgeL\)}
        \doubleLine
        \UnaryInfC{\(\kappa^*_w, \Xi \Rightarrow \Lambda \)}
        \DisplayProof
      \end{multline*}
      where \(\mmod{\Sigma_i \union \Xi^{\nec}_i} \subseteq \set{[0]_n}\), \(\nec^{i+1} \Xi^{\nec}_{i} \subseteq \Xi\) and \(\Theta = \phi, \Theta'\).
      Now, assume that the principal formula of \((\modalalt{\wKT{n}})\) in \(\pi\) is in \(\Lambda\).
      Then we define \(\beta\) as
      \begin{multline*}
        \left(
          w,
          \AxiomC{\(\pi_0\)}
          \noLine
          \UnaryInfC{\(\necd^n \Sigma_0, \set{\nec^i \Sigma_i}_{1 \leq i < n}, \necd^n\Xi^{\nec}_0, \set{\nec^i \Xi^{\nec}_i}_{1 \leq i < n} \Rightarrow  \phi\)}
          \RightLabel{\(\modalalt{\wKT{n}}\)}
          \UnaryInfC{\(\set{\nec^{i+1} \Sigma_i}_{i < n}, \Gamma, \Xi \Rightarrow  \nec \Theta, \Delta, \nec \phi,\Lambda'\)}
          \DisplayProof
        \right)
        \overset{\beta}{\longmapsto}\\
        \AxiomC{\(\beta(w^{\nec},\pi_0)\)}
        \noLine
        \UnaryInfC{\((\kappa^{\nec})^*, \necd^n \Xi^{\nec}_0, \set{\nec^i \Xi^{\nec}_i}_{1 \leq i < n} \Rightarrow \phi \)}
        \RightLabel{\(\wk\)}
        \UnaryInfC{\(\necd^n (\kappa^{\nec})^*, \necd^n \Xi^{\nec}_0, \set{\necd^n\nec^i \Xi^{\nec}_i}_{1 \leq i < n} \Rightarrow \phi\)}
        \RightLabel{\(\modal{\wKT{n}}\)}
        \UnaryInfC{\(\nec (\kappa^{\nec})^*, \Xi \Rightarrow \nec \phi, \Lambda'\)}
        \RightLabel{\(\wk + \wedgeL\)}
        \doubleLine
        \UnaryInfC{\(\kappa^*_w, \Xi \Rightarrow \nec \phi, \Lambda' \)}
        \DisplayProof
      \end{multline*}
      where \(\mmod{\Sigma_i \union \Xi^{\nec}_i} \subseteq \set{[0]_n}\), \(\nec^{i+1} \Xi^{\nec}_{i} \subseteq \Xi\) and \(\Lambda = \nec \phi, \Lambda'\).
  \end{itemize}

  To show that \(\beta(w,\pi)\) is always a proof, we assign a measure \(\omega^2 |\Gamma_w \Rightarrow_{[i_w]_n} \Delta_w| + \omega\length(w) + \lhg(\pi)\) to every pair \((w,\pi)\), where \(|\Gamma_w \Rightarrow_{[i_w]_n} \Delta_w|\) is the sum of the complexity of the formulas in \(\Gamma_w \Rightarrow_{[i_w]_n} \Delta_w\) and \(\length(w)\) is the length of \(w\) as a sequence.
  We notice that in the corecursive calls of \(\beta\) this measure strictly decreases in all cases except when \(w\) has rule \((\modal[+]{\wKT{n}})\) and \(\pi\) ends in rule \((\modalalt{\wKT{n}})\).
  However, in these cases, at the produced preproof there is an application of \((\modal{\wKT{n}})\) in the path from the corecursive call to the root. 
  This implies that in any infinite branch of \(\beta(w, \pi)\), there will be infinitely many applications of \((\modal{\wKT{n}})\), as desired.
\end{proof}

Combing the two previous lemmas, we obtain the desired result.

\begin{theorem}
  \(\wGL{n}\) has uniform Lyndon interpolation.
\end{theorem}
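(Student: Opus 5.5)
Given a formula \(\phi\) and vocabularies \(V_+,V_-\), the plan is to take an interpolation template \(T\) for the sequent \(\Rightarrow \phi\) (or equivalently \(\phi \Rightarrow\)). This requires a small adjustment of orientation: the lemmas above are phrased with the interpolant on the right for the first verification and on the left for the second. So we work with the sequent \(\phi \Rightarrow\), whose template interpolant \(\iota_T\) satisfies \(\n{\wGL{n}} \vdash \phi \Rightarrow \iota_T\) by Lemma~\ref{first-verification-interpolant-wGL}. Such a template exists by the lemma that every sequent has an interpolation template, and the equational system \(\mathcal{E}_T\) is solvable by the lemma just proved together with Theorem~\ref{positive-modalized-equation-systems-solvable}. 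Hence \(\iota_T\) is well defined, and we claim it is the uniform Lyndon interpolant of \(\phi\) over \((V_+,V_-)\).

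We verify the three conditions in turn. The vocabulary condition \(\voc_+(\iota_T)\subseteq V_+\), \(\voc_-(\iota_T)\subseteq V_-\) was already observed after the definition of the interpolant, via Lemma~\ref{substitution-and-polarity}. For \(\wGL{n}\vdash \phi\to\iota_T\), we apply Lemma~\ref{first-verification-interpolant-wGL} to obtain \(\n{\wGL{n}}\vdash \phi\Rightarrow\iota_T\), and then use the corollary summarizing the equivalence between \(\n{\wGL{n}}\) and \(\wGL{n}\) to conclude \(\wGL{n}\vdash(\phi\Rightarrow\iota_T)^\flat\), which is \(\phi\to\iota_T\) up to propositional equivalence. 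For the uniformity condition, let \(\psi\) satisfy \(\voc_\pm(\psi)\subseteq V_\pm\) and \(\wGL{n}\vdash\phi\to\psi\). By the same corollary and invertibility of \((\toR)\), we get \(\n{\wGL{n}}\vdash\phi\Rightarrow\psi\), and by the equivalence lemma between \(\n{\wGL{n}}\) and \(\nmod{\wGL{n}}\) also \(\nmod{\wGL{n}}\vdash\phi\Rightarrow\psi\). Taking \(\Xi=\varnothing\) and \(\Lambda=\psi\), we have \(\voc_+(\Xi\Rightarrow\Lambda)=\voc_+(\psi)\subseteq V_+\) and \(\voc_-(\Xi\Rightarrow\Lambda)=\voc_-(\psi)\subseteq V_-\), so Lemma~\ref{second-verification-interpolant-wGL} yields \(\n{\wGL{n}}\vdash\iota_T\Rightarrow\psi\). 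Translating back gives \(\wGL{n}\vdash\iota_T\to\psi\).

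The only delicate point is bookkeeping of orientation and polarity. The sequent-level vocabulary of \(\Xi\Rightarrow\Lambda\) must be computed as \(\voc_\pm\) of \((\Xi\Rightarrow\Lambda)^\flat\), so that a formula on the right contributes its own polarities; with \(\Xi\) empty this is immediate. One also has to check that the template lemmas, which start from the \(0\)-sequent \(\phi\Rightarrow_{[0]_n}\), apply to the root of \(T\). Everything else is a direct combination of the earlier results.
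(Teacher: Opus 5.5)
Your proposal is correct and matches the paper's own proof: you take a template \(T\) for \(\phi \Rightarrow\) and obtain \(\wGL{n} \vdash \phi \to \iota_T\) from Lemma~\ref{first-verification-interpolant-wGL}. For uniformity you pass from \(\wGL{n} \vdash \phi \to \psi\) to \(\nmod{\wGL{n}} \vdash \phi \Rightarrow \psi\) and apply Lemma~\ref{second-verification-interpolant-wGL} with \(\Xi\) empty and \(\Lambda = \psi\), exactly as the paper does. The only slip is the parenthetical claim that templates for \(\Rightarrow \phi\) and \(\phi \Rightarrow\) are interchangeable; they are not, since only the latter yields the interpolant of \(\phi\), but you then correctly work with \(\phi \Rightarrow\).
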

\begin{proof}
  Let \(\phi\) be a formula and \(V_+,V_-\) be vocabularies.
  We know that \(\phi \Rightarrow\) has an interpolation template~\(T\).
  We have that \(\voc_+(\iota_T) \subseteq V_+\) and \(\voc_-(\iota_T) \subseteq V_-\) and since \(\n{\wGL{n}} \vdash \phi \Rightarrow \iota_T\) we get \(\wGL{n} \vdash \phi \to \iota_T\).
  Assume \(\wGL{n} \vdash \phi \to \psi\) for some \(\psi\) with \(\voc_b(\psi) \subseteq V_b\) for \(b \in \set{+,-}\).
  Then \(\nmod{\wGL{n}} \vdash \phi \Rightarrow \psi\) so \(\n{\wGL{n}} \vdash \iota_T \Rightarrow \psi\), from where we obtain the desired \(\n{\wGL{n}} \vdash \iota_T \to \psi\).
\end{proof}

\section*{Conclusion and future work}

In this paper we gave the first effective cut elimination proof for the sequent calculus of \(\g{\wGL{n}}\).
Additionally, we provided an alternative non-wellfounded sequent calculus that has an improved subformula property, which we used to provide a proof of uniform Lyndon interpolation for all Sacchetti's logics.
For this work we checked that, in \(\wGL{n}\), there is a class of modal equational systems where the depth of the variables at the solution can be adequately handled.

As future work, we leave open the option of exploring the notion of depth in interpolation.
It seems possible to consider a strengthening of interpolation, in which the depth of variables is part of the common vocabulary (instead of focusing on the polarity).
Our proofs suggest, although it should be checked in detail, that \(\wGL{n}\) has this alternative interpolation property (refining the equational system of interpolation templates to restrict properly the depth of variables, instead of allowing any depth).
Understanding the consequences of a logic having  this ``depth'' interpolation property is also left as future work.

\appendix
\section{Cut reductions}\label{sec:cut-reductions}
Here we display some cut reductions that were not shown in the proof of Theorem~\ref{cut-elimination-wGL}.

\textbf{Weakening part.}
If \(\chi\) belongs to the weakening part of the last rule instance of \(\pi\) or \(\tau\) we can delete directly, as weakening parts can be modified arbitrarily.
From now on we assume that \(\chi\) does not occur in the weakening part of the rule instances.

\textbf{Axiomatic}.
Assume \(\pi\) ends in \((\ax)\), the case for \(\tau\) is analogous.
Then \(\chi = p\) for some variable \(p\) and the desired reduction is
\[
  \AxiomC{\(\)}
  \RightLabel{\(\ax\)}
  \UnaryInfC{\(p, \Gamma' \Rightarrow \Delta, p\)}
  \DisplayProof
  \quad
  \AxiomC{\(\tau\)}
  \noLine
  \UnaryInfC{\(p, p, \Gamma' \Rightarrow \Delta\)}
  \DisplayProof
  \longmapsto
  \AxiomC{\(\ctr(\tau)\)}
  \noLine
  \UnaryInfC{\(p, \Gamma' \Rightarrow \Delta\)}
  \DisplayProof
\]
where \(\Gamma = p, \Gamma'\).

If \(\pi\) ends in \(\botL\) then \(\chi\) would belong to the weakening part, which is already covered.
Assume \(\tau\) ends in \(\botL\), so \(\chi = \bot\).
\[
  \AxiomC{\(\pi\)}
  \noLine
  \UnaryInfC{\(\Gamma \Rightarrow \Delta, \bot\)}
  \DisplayProof
  \quad
  \AxiomC{\(\tau\)}
  \noLine
  \UnaryInfC{\(\bot, \Gamma \Rightarrow \Delta\)}
  \DisplayProof
  \longmapsto
  \AxiomC{\(\mathrm{inv}_{\botR}(\pi)\)}
  \noLine
  \UnaryInfC{\(\Gamma \Rightarrow \Delta\)}
  \DisplayProof
\]

From now on we assume that neither \(\pi\) nor \(\tau\) end in \((\ax)\) or \((\botL)\).

\textbf{\(\botR\) case}.
Asssume \(\pi\) ends in an application of \((\botR)\), the case for \(\tau\) is analogous.
If \(\bot\) is the cut formula, the desired cut reduction is obtained taking the immediate subproof of \(\pi\).
If \(\bot\) is not the cut formula, the desired cut reduction is
\[
  \AxiomC{\(\pi_0\)}
  \noLine
  \UnaryInfC{\(\Gamma \Rightarrow \Delta', \chi\)}
  \RightLabel{\(\botR\)}
  \UnaryInfC{\(\Gamma \Rightarrow \bot, \Delta', \chi\)}
  \DisplayProof \quad
  \AxiomC{\(\tau\)}
  \noLine
  \UnaryInfC{\(\chi, \Gamma \Rightarrow \bot, \Delta'\)}
  \DisplayProof
  \longmapsto
  \AxiomC{\(\pi_0\)}
  \noLine
  \UnaryInfC{\(\Gamma \Rightarrow \Delta', \chi\)}
  \AxiomC{\(\mathrm{inv}_{\botR}(\tau)\)}
  \noLine
  \UnaryInfC{\(\chi, \Gamma \Rightarrow \Delta'\)}
  \RightLabel{\(\cut \text{(I.H.)}\)}
  \BinaryInfC{\(\Gamma \Rightarrow \Delta'\)}
  \DisplayProof
\]
where \(\Delta = \bot, \Delta'\).

From now on we assume that neither \(\pi\) nor \(\tau\) end in \((\botR)\).

\textbf{Principal cut reduction.}
Assume \(\chi\) is principal in \(\pi\) and \(\tau\).
Then \(\chi\) is an implication, as \(\nec\)-formulas cannot be principal at the left side of sequents.
\begin{multline*}
  \AxiomC{\(\pi_0\)}
  \noLine
  \UnaryInfC{\(\chi_0, \Gamma \Rightarrow \Delta, \chi_1\)}
  \RightLabel{\(\toR\)}
  \UnaryInfC{\(\Gamma \Rightarrow \Delta, \chi_0 \to \chi_1\)}
  \DisplayProof
  \quad
  \AxiomC{\(\tau_0\)}
  \noLine
  \UnaryInfC{\(\Gamma \Rightarrow \Delta, \chi_0\)}
  \AxiomC{\(\tau_1\)}
  \noLine
  \UnaryInfC{\(\chi_1, \Gamma \Rightarrow \Delta\)}
  \RightLabel{\(\toL\)}
  \BinaryInfC{\(\chi_0 \to \chi_1, \Gamma \Rightarrow \Delta\)}
  \DisplayProof\\
  \\
  \longmapsto
  \AxiomC{\(\wk(\tau_0)\)}
  \noLine
  \UnaryInfC{\(\Gamma \Rightarrow \Delta, \chi_1, \chi_0\)}
  \AxiomC{\(\pi_0\)}
  \noLine
  \UnaryInfC{\(\chi_0, \Gamma \Rightarrow \Delta, \chi_1\)}
  \RightLabel{\(\cut \text{(I.H.)}\)}
  \BinaryInfC{\(\Gamma \Rightarrow \Delta, \chi_1\)}
  \AxiomC{\(\tau_1\)}
  \noLine
  \UnaryInfC{\(\chi_1, \Gamma \Rightarrow \Delta\)}
  \RightLabel{\(\cut \text{(I.H.)}\)}
  \BinaryInfC{\(\Gamma \Rightarrow \Delta\)}
  \DisplayProof
\end{multline*}

\textbf{Commutative cut reduction.}
Finally, assume that the cut formula is not principal in either \(\pi\) or \(\tau\).
Assume that the cut formula is not principal in \(\pi\) and the last rule of \(\pi\) is \((\toR)\), the cases where the rule is instead \((\toL)\), or where any of theses occurs at \(\tau\) with the cut formula non-principal are analogous.
The desired cut reduction is

\[
  \AxiomC{\(\pi_0\)}
  \noLine
  \UnaryInfC{\(\phi, \Gamma \Rightarrow \psi, \Delta', \chi\)}
  \RightLabel{\(\toR\)}
  \UnaryInfC{\(\Gamma \Rightarrow \phi \to \psi, \Delta', \chi\)}
  \DisplayProof
  \quad
  \AxiomC{\(\tau\)}
  \noLine
  \UnaryInfC{\(\chi, \Gamma \Rightarrow \phi \to \psi, \Delta'\)}
  \DisplayProof
  \longmapsto
  \AxiomC{\(\pi_0\)}
  \noLine
  \UnaryInfC{\(\phi, \Gamma \Rightarrow \psi, \Delta', \chi\)}
  \AxiomC{\(\mathrm{inv}_{\toR}(\tau)\)}
  \noLine
  \UnaryInfC{\(\chi, \phi, \Gamma \Rightarrow \psi, \Delta'\)}
  \RightLabel{\(\cut\text{ (I.H.)}\)}
  \BinaryInfC{\(\phi, \Gamma \Rightarrow \psi, \Delta'\)}
  \RightLabel{\(\toR\)}
  \UnaryInfC{\( \Gamma \Rightarrow \phi \to \psi, \Delta'\)}
  \DisplayProof
\]
where \(\Delta = \phi \to \psi, \Delta'\)


Finally, we have that the cut formula is not principal in either \(\pi\) or \(\tau\) with last rule \((\modal{\wKT{n}})\).
We notice that this cannot occur in \(\pi\), as then the cut formula would belong to the weakening part, so it must occur in~\(\tau\).
To make the cut formula not belong to the weakening part, it must be the case that \(\chi = \nec \chi_0\).
If \(\chi\) were not principal in \(\pi\), then we would be in one of the previous cases. Hence, we can assume that \(\chi\) is prinicipal in \(\pi\), and thus \(\pi\) must end in \((\modal{\wKT{n}})\).
This case is covered in the proof of Theorem~\ref{cut-elimination-wGL}.

\bibliography{bib}
\bibliographystyle{abbrvnat}
\end{document}